\newcommand\R{{\mathbb R}}
\newcommand{\cA}{\mathcal A}
\newcommand{\cC}{\mathcal C}
\newcommand{\cE}{\mathcal E}
\newcommand{\cF}{\mathcal F}
\newcommand{\cH}{\mathcal H}
\newcommand{\cJ}{\mathcal J}
\newcommand{\cL}{\mathcal L}
\newcommand{\cM}{\mathcal M}
\newcommand{\cP}{\mathcal P}
\newcommand{\cS}{\mathcal S}
\newcommand{\cU}{\mathcal U}
\newcommand{\cV}{\mathcal V}
\newcommand{\cX}{\mathcal X}
\newcommand{\cZ}{\mathcal Z}
\newcommand\dt{{\frac{\mathrm d}{\mathrm dt}}}
\newcommand\dpt{{\frac{\partial}{\partial t}}}
\newcommand\Dx{{\nabla_x}}
\newcommand\Dv{{\nabla_v}}
\newcommand{\dd}{{\, \mathrm d}}
\newcommand\ddxv{{ \, \mathrm dx \, \mathrm dv}}
\newcommand{\td}[2]{\frac{\mathrm d #1}{\mathrm d #2}}
\newcommand{\rint}{{\int_{\R^3}}}
\newcommand{\drint}{{\int_{\R^6}}}
\newcommand{\vs}{\vspace{0.3cm}}
\newcommand{\ds}{\displaystyle}
\newcommand{\fa}{\forall \,}
\newcommand{\sk}{\smallskip}
\let\oldmarginpar\marginpar
\renewcommand\marginpar[1]{\-\oldmarginpar[\raggedleft\footnotesize #1]%
{\raggedright\footnotesize #1}}
\date{Novembre 2011} \bbkannee{64\`eme ann\'ee, 2011-2012}
\title{Stabilit\'e orbitale\\ pour le syst\`eme de
  Vlasov-Poisson gravitationnel} \subtitle{d'apr\`es
\author{Cl\'ement
  MOUHOT}
\address{University of Cambridge\\
  Centre for Mathematical Sciences\\
  Wilberforce Road\\
  Cambridge CB3 0WA, UK} \email{C.Mouhot@dpmms.cam.ac.uk}
\begin{document}
\maketitle

\noindent{\bf INTRODUCTION}

\bigskip

Cet expos\'e est consacr\'e aux avanc\'ees math\'ematiques r\'ecentes
sur un probl\`eme c\'el\`ebre de l'astrophysique, la stabilit\'e de
mod\`eles galactiques. La question se formule tr\`es simplement~: si
l'on consid\`ere un ensemble d'un tr\`es grand nombre d'\'etoiles en
interaction gravitationnelle\footnote{Le r\^ole jou\'e par les plan\`etes 
  dans la dynamique est n\'eglig\'e au premier ordre car leurs masses sont bien plus
  petites que celles des \'etoiles.} dont la coh\'erence est assur\'ee
par leur attraction r\'eciproque et que l'on consid\`ere en premi\`ere
approximation comme un syst\`eme ferm\'e, quelles sont les
r\'epartitions statistiques stables au cours du temps~?
Autrement dit, quelles sont les configurations de galaxies observables
dans notre univers? On n\'eglige ici les effets relativistes et on se
place dans le cadre de la m\'ecanique classique.

Ce probl\`eme semble \`a premi\`ere vue tout autant insoluble que le
probl\`eme \`a $N$ corps de Newton. Comment formuler des pr\'edictions
\`a long terme sur un syst\`eme de $10^{11}$ corps\footnote{C'est
  l'ordre de grandeur du nombre d'\'etoiles dans notre galaxie.}
alors m\^eme que l'on ne sait pas r\'esoudre de mani\`ere
satisfaisante le probl\`eme \`a $3$ corps~! C'est ici que la
m\'ecanique statistique entre en jeu. En suivant les id\'ees de
Maxwell et de Boltzmann, on peut tenter de d\'ecrire de mani\`ere
statistique l'\'evolution de nos $N$~corps lorsque $N$ tend vers
l'infini et que les corps sont suffisamment \og peu corr\'el\'es\fg{},
\`a travers une \'equation aux d\'eriv\'ees partielles non-lin\'eaire
sur la r\'epartition d'un corps typique.

Cette approche a d'abord \'et\'e appliqu\'ee aux cas de gaz
collisionnels pour donner la c\'el\`ebre \'equation de
Boltzmann. Cependant, les collisions entre \'etoiles dans une galaxies
sont quasi-absentes et l'interaction se fait essentiellement \`a
distance \textit{via} le champ gravitationnel. Dans les ann\'ees 1910
et 1930, Jeans puis Vlasov d\'ecouvrent comment effectuer la limite $N
\to +\infty$ dans ce cas \og non-collisionnel\fg{} afin d'obtenir des
\'equations aux d\'eriv\'ees partielles non-lin\'eaires dites de \og
champ moyen\fg{}. La plus c\'el\`ebre d'entre elles est l'\'equation
de Vlasov-Poisson, dont nous allons \'etudier ici la version
gravitationnelle \eqref{eq:1}-\eqref{eq:2}. Cette \'equation de
transport non-lin\'eaire d\'ecrit avec une excellente pr\'ecision
l'\'evolution de syst\`emes stellaires sur de grandes \'echelles de
temps.

Ainsi, lorsque le nombre de corps est grand et que les corr\'elations
sont faibles, on peut esp\'erer formuler des pr\'edictions de
stabilit\'e \`a partir de l'\'etude de l'\'equation de
Vlasov-Poisson. C'est le physicien russe Antonov \cite{An1,An2} qui,
le premier, d\'ecouvre comment r\'esoudre le probl\`eme de la
stabilit\'e, mais uniquement dans un cadre lin\'earis\'e. Il
d\'emontre ainsi la \emph{stabilit\'e lin\'earis\'ee des mod\`eles
  galactiques sph\'eriques et monotones en l'\'energie microscopique
  pour des petites perturbations.}  Cependant, l'\'equation de
Vlasov-Poisson est r\'eversible en temps, non dissipative, et ses
solutions montrent des oscillations en temps grand; rien ne garantit
\textit{a priori} que l'\'etude de stabilit\'e lin\'earis\'ee
d'Antonov implique la stabilit\'e non-lin\'eaire recherch\'ee. C'est
ce probl\`eme math\'ematique que nous appellerons {\bf conjecture de
  stabilit\'e non-lin\'eaire} \`a la Antonov. Durant les cinquante
derni\`eres ann\'ees, l'analyse math\'ematique des \emph{\'equations
  cin\'etiques} de Boltzmann et Vlasov a connu un fort
d\'eveloppement. Cette conjecture de stabilit\'e non-lin\'eaire a
\'et\'e r\'ecemment d\'emontr\'ee par Lemou, M\'ehats et Rapha\"el
\cite{LMR6,LMR8,LMR9}, suivant des travaux pr\'ecurseurs de Dolbeault,
Guo, Had\v zi\'c, Lin, Rein, S\'anchez, Soler, Wan, Wolansky ainsi que
Lemou, M\'ehats et Rapha\"el
\cite{Wa1,Wo-i,Wa2,Gu1,Gu2,Gu3,Gu4,GR1,GR2,DSS,SS,Ha,GR3,GL,LMR1,LMR3,LMR4}.

Dans cet expos\'e, nous proposerons tout d'abord dans la section 1 une
introduction math\'ematique au probl\`eme, en expliquant l'origine du
syst\`eme de Vlasov-Poisson gravitationnelle \`a partir du probl\`eme
\`a $N$ corps. Nous rappellerons ensuite dans la section~2 les
principales propri\'et\'es math\'ematiques de ce syst\`eme
d'\'equations. Puis nous aborderons dans la section~3 la r\'esolution
du probl\`eme lin\'earis\'e. Enfin nous traiterons dans la section~4
de la question de la stabilit\'e non-lin\'eaire. Nous retracerons
l'histoire des diff\'erentes m\'ethodes math\'ematiques
d\'evelopp\'ees pour attaquer ce probl\`eme, puis nous d\'etaillerons
le th\'eor\`eme central du travail \cite{LMR9} de Lemou, M\'ehats et
Rapha\"el, en donnant un sch\'ema d\'etaill\'e de preuve.  Nous
conclurons finalement avec des commentaires et questions ouvertes.

L'auteur remercie Y. Guo, P.-E. Jabin, M. Lemou, F. M\'ehats, Z. Lin,
P. Rapha\"el et J. Soler pour les \'echanges par courriel ou
discussions durant la pr\'eparation de cet expos\'e, ainsi que
S. Martin pour ses relectures et commentaires sur ce texte.

\section{\textup{Probl\`eme \`a $N$ corps et syst\`eme de
    Vlasov-Poisson}}

\subsection{Le probl\`eme \`a $N$ corps de Newton}
\label{sec:le-probleme-a}

On \'ecrit le probl\`eme \`a $N$ corps, pour des interactions binaires
et sans champ ext\'erieur, et en normalisant toutes les masses \`a
$1$~:
\begin{equation}
  \label{eq:4}
  1 \le i \le N, \quad \left\{ 
    \begin{array}{l} \displaystyle
      x_i'(t) = v_i(t) = \frac{\partial H}{\partial v_i}(X(t),V(t)), 
      \vspace{0.3cm} \\ \displaystyle
      v_i'(t) = - \sum_{i \not=  j} \nabla_x \psi(x_i - x_j) 
      = - \frac{\partial H}{\partial x_i}(X(t),V(t))
    \end{array}
    \right.
\end{equation}
o\`u $\psi$ est le potentiel de l'interaction. Ces \'equations sont
\'ecrites sous forme hamiltonienne pour le hamiltonien
\begin{equation}\label{eq:HamMicro}
H(X,V) = \sum_{i=1} ^N \frac{|v_i|^2}2 + \sum_{i < j} \psi(x_i-x_j) 
\end{equation}
o\`u $X=(x_1, \dots, x_N)$, $V=(v_1,\dots, v_d)$ et chaque $x_i, v_i
\in \R^3$. 

M\^eme si ce syst\`eme d'\'equations diff\'erentielles ordinaires
non-lin\'eaires coupl\'ees peut \^etre aussi bien utilis\'e pour
d\'ecrire l'\'evolution d'objets infiniment petits, comme des
mol\'ecules ou infiniment grands, comme des \'etoiles, nous parlerons
ici de \emph{point de vue microscopique et trajectoriel} et nous
appellerons $\psi$ le \emph{potentiel d'interaction microscopique} et
$H$ le \emph{hamiltonien microscopique}.

\subsection{La limite de Vlasov ou limite \og de champ moyen\fg{}}
\label{sec:la-limite-de}

Si l'on consid\`ere l'\'evolution de la distribution $f^{(N)}=f^{(N)}(t,X,V)$
de nos $N$ corps dans l'espace des phases, des positions et vitesses,
on obtient l'\emph{\'equation de Liouville \`a $N$ corps}, qui est le
point de vue statistique sur ce syst\`eme:
\begin{equation} \label{lec2-FNevo} 
  \partial_t f^{(N)} +
  \sum_{i=1}^N \left( \frac{\partial H}{\partial v_i} \cdot
    \frac{\partial f^{(N)}}{\partial x_i} - \frac{\partial H}{\partial x_i}
    \cdot \frac{\partial f^{(N)}}{\partial v_i} \right) = \partial_t f^{(N)} +
  \left\{ f^{(N)}, H
  \right\} = 0
\end{equation} 
o\`u $\{ \cdot, \cdot \}$ d\'esigne le crochet de Poisson sur
$(\R^3)^N \times (\R^3)^N$: 
\[
\{ f^{(N)}, g^{(N)} \} = \nabla_X f^{(N)} \cdot \nabla_V g^{(N)} -
\nabla_X g^{(N)} \cdot \nabla_V f^{(N)}.
\]
Les solutions sont donn\'ees par $f^{(N)}(t,X,V) =
f^{(N)}(S_{-t}(X,V))$ o\`u $S_{t}$ d\'esigne le flot solution des
\'equations de Newton.

Cette r\'esolution par la \emph{m\'ethode des caract\'eristiques}
implique la conservation du signe de $f^{(N)}$:
\[
f^{(N)}(0,X,V) \ge 0 \quad \Longrightarrow \quad f^{(N)}(t,X,V)  \ge 0
\]
mais aussi, du fait que le jacobien de $S_t$ vaut $1$ pour tout $t\,$
\footnote{Ce qui est une version du c\'el\`ebre th\'eor\`eme de
  Liouville dans ce contexte.}, de la masse
\[
\int_{(\R^3\times\R^3)^N} f^{(N)}(t,X,V) \dd X \dd V = 
\int_{(\R^3\times \R^3)^N} f^{(N)}(0,X,V) \dd X\dd V
\]
et plus g\'en\'eralement de toute \og fonctionnelle de Casimir\fg{} 
\[
\int_{(\R^3\times\R^3)^N} \cC \left( f^{(N)}(t,X,V) \right) \dd X \dd V = 
\int_{(\R^3\times \R^3)^N} \cC \left( f^{(N)}(0,X,V) \right) \dd X \dd V
\]
pour $\cC \in C^1(\R_+,\R_+)$, $\cC(0)=0$. Une fonctionnelle de
Casimir d\'esigne pour un syst\`eme hamiltonien une fonctionnelle qui
annule le crochet de Poisson avec toute autre fonctionnelle le long
des trajectoires (voir par exemple \cite{Ar1,Ar2}).

Afin de comprendre l'\'evolution \`a travers un syst\`eme r\'eduit, on
introduit la distribution d'une particule $f = f^{(1)}$:
\[ 
f(t,x,v) := \int_{(\R^3\times \R^3)^{N-1}} f^{(N)}(t,X,V) \dd x_2 \dd
x_3 \, \dots \dd x_N \dd v_2 \dd v_3 \, \dots \dd v_N
\] 
qui est donn\'ee par la marginale de la distribution compl\`ete \`a
$N$ corps $f^{(N)}$. On calcule alors l'\'equation v\'erifi\'ee par cette
distribution \`a une particule 
\begin{equation}\label{eq:oneparticle}
  \frac{\partial f}{\partial t} + v \cdot \nabla_x f 
  - (N-1) \int_{\R^3 \times \R^3} \nabla_ x \psi(x-y)
  \cdot \nabla_v  f^{(2)} (x,y,v,v_*) \dd y \dd v_* = 0.
\end{equation}

Cette \'equation d\'epend bien s\^ur de la seconde marginale
(distribution \`a deux particules) 
\[
f^{(2)}(t,x_1,x_2,v_1,v_2) := \int_{(\R^3\times\R^3)^{N-2}} f^{(N)}
\dd x_3 \dd x_4 \, \dots \dd x_N \dd v_3 \dd v_4 \, \dots \dd v_N
\]
qui contient de l'information sur les corr\'elations du syst\`eme. De
la m\^eme fa\c con, on pourrait d\'efinir les $k$-marginales $f^{(k)}$
pour $1 \le k \le N$, et l'ensemble des \'equations coupl\'ees sur
toutes ces marginales constitue la \emph{hi\'erarchie BBGKY}
(Bogoliubov-Born-Green-Kirkwood-Yvon).

La limite de champ moyen consiste: 
\begin{itemize}
\item d'une part \`a consid\'erer que chaque interaction binaire est
  d'ordre $O(1/N)$, soit
\[
\psi = \psi_N = \frac{\bar \psi}N,
\]
ce qui signifie que l'influence d'une \'etoile sur une autre \'etoile
est n\'egligeable \`a l'\'echelle de l'ensemble de la galaxie, mais
que l'action de l'ensemble de la galaxie sur une \'etoile donn\'ee est
d'ordre $O((N-1)/N)=O(1)$;
\item d'autre part \`a consid\'erer que
\begin{equation}
  \label{eq:chaos}
  f^{(2)} \sim f \times f \quad \mbox{ lorsque } \ N \to \infty,
\end{equation}
hypoth\`ese que l'on appelle \emph{propri\'et\'e de chaos} dans ce
contexte, et qui traduit des \emph{corr\'elations faibles} dans le
syst\`eme.
\end{itemize}

Si l'hypoth\`ese \eqref{eq:chaos} reste vraie au cours du temps
lorsqu'elle est v\'erifi\'ee au temps initial, on parle de \og
\emph{propagation du chaos}\fg{}. D\'emontrer cette propri\'et\'e est
un probl\`eme math\'ematique ouvert dans le cas de l'interaction
gravitationnelle (mais \'egalement pour l'interaction de Coulomb
r\'epulsive entre particules de m\^eme charge); nous renvoyons \`a
\cite{Do,BH,HJ1,HJ2} pour des r\'esultats partiels sur ce sujet.

Sous r\'eserve de l'hypoth\`ese d'\'echelle sur $\psi_N$ et de la
propagation du chaos, on obtient l'\'equation \emph{ferm\'ee} suivante
sur la distribution $f$ dans la limite $N \to +\infty$:
\begin{equation}\label{eq:oneparticleclosed}
  \frac{\partial f}{\partial t} + v \cdot \nabla_x f 
  - \int_{\R^3 \times \R^3} \nabla_ x \bar \psi(x-y)
  \cdot \nabla_v  f (x,v) \, f(y,w) \dd y \dd w = 0,
\end{equation}
que l'on peut r\'e\'ecrire en 
\begin{equation}\label{eq:oneparticleclosed}
  \frac{\partial f}{\partial t} + v \cdot \nabla_x f 
  - \left( \int_{\R^3} \nabla_ x \bar \psi(x-y) \, \left(
      \int_{\R^3} f(y,v_*) \dd v_* \right) \dd y \right) \cdot 
  \nabla_v  f (x,v) = 0.
\end{equation}

\subsection{Le syst\`eme de Vlasov-Poisson gravitationnel}
\label{sec:le-systeme-de}

Lorsque le potentiel microscopique correspond aux interactions
gravitationnelles $\bar \psi(x) = - 1/(4 \pi |x|)$, on obtient ainsi
le \emph{syst\`eme de Vlasov-Poisson gravitationnel}: 
\begin{equation}
  \label{eq:1}
  \left\{
  \begin{array}{l}
    \partial_t f + v \cdot \nabla_x f - \nabla_x \phi_f \cdot \nabla_v
    f =0, \quad t \in \R_+, \ x \in \R^3, \ v \in \R^3,\vspace{0.3cm} \\
    f(t=0,x,v) = f_{in}(x,v)
  \end{array}
\right.
\end{equation}
o\`u l'on d\'efinit le \emph{champ moyen gravitationnel} $\phi_f$
(d\'ependant de la fonction $f$) par
\begin{equation}
  \label{eq:2}
  \rho_f (t,x) := \int_{\R^3} f(t,x,v) \dd v \quad \mbox { et } \quad
  \phi_f(t,x) := - \frac{1}{4 \pi |x|} \ast \rho_f.
\end{equation}
Observons que le champ $\phi_f$ se r\'esout par l'\emph{\'equation
elliptique de Poisson}
\begin{equation}
  \label{eq:3}
  \Delta_x \phi_f = \rho_f
\end{equation}
ce qui conf\`ere son nom au syst\`eme de Vlasov-Poisson.

On voit ici le caract\`ere fondamental du syst\`eme de
Vlasov-Poisson. Dans le cas d'inter\-actions \`a distance (par
opposition aux m\'ecanismes de collision), il est \`a la m\'ecanique
classique statistique ce que les \'equations de Newton sont \`a la
m\'ecanique classique.

\subsection{Le probl\`eme de stabilit\'e d'un point de vue statistique}
\label{sec:le-probleme-de}

On peut maintenant reformuler la question de la stabilit\'e des
galaxies dans le cadre du syst\`eme de Vlasov-Poisson. Cela rejoint
l'une des premi\`eres questions que l'on se pose face \`a une
\'equation aux d\'eriv\'ees partielles: quelles sont les solutions
stationnaires de ce syst\`eme et, parmi ces derni\`eres, lesquelles
sont stables ou instables?

De nouveaux aspects math\'ematiques diff\'erents de l'analyse des
\'equations de Newton, et sur lesquels nous reviendrons plus loin,
surgissent alors: quelles notions de solution utiliser pour
l'\'equation? Quels espaces fonctionnels utiliser?  Quelle
r\'egularit\'e pour ces solutions?

Puisque nous sommes partis des \'equations de Newton, il est
l\'egitime de se demander quel serait l'impact en retour d'un
th\'eor\`eme de stabilit\'e sur le probl\`eme \`a $N$~corps. C'est une
question difficile et mal comprise \`a l'heure actuelle. Nous pouvons
n\'eanmoins avancer deux principes~: d'une part, cela d\'epend
fortement des r\'eponses donn\'ees aux probl\`emes ouverts de limite
de champ moyen et propagation du chaos et, d'autre part, cela d\'epend
vraisemblablement \'egalement de l'espace fonctionnel dans lequel les
r\'esultats de stabilit\'e sont obtenus. Dans cet expos\'e, nous nous
int\'eresserons \`a des voisinages de stabilit\'e pour des espaces des
Lebesgue; ces derniers sont peut-\^etre trop peu r\'eguliers pour
esp\'erer en d\'eduire des informations sur le probl\`eme \`a
$N$~corps.

\section{\textup{Propri\'et\'es du syst\`eme de Vlasov-Poisson}}
\label{sec:les-propr-fond}

Nous allons tout d'abord rappeler les propri\'et\'es math\'ematiques
\'el\'ementaires des \'equations \eqref{eq:1}-\eqref{eq:2}. 

\subsection{\'Energie microscopique et hamiltonien}
\label{sec:energ-micr-et}

Les \'equations de Newton pr\'eservent le hamiltonien microscopique
$H$ d\'efini par \eqref{eq:HamMicro}:
\[
\forall \, t \in \R, \quad H(S_t(X,V)) = H(X,V). 
\]
En int\'egrant ceci contre la distribution initiale de nos $N$
particules $f^{(N)}$ (\'equation de Liouville) et en divisant par $N$
(pour \'eviter la divergence de l'\'energie lorsque $N$ tend vers
l'infini), on obtient la conservation statistique du hamiltonien
microscopique
\[
\fa t \in \R, \int_{(\R^3 \times \R^3)^N} \!\frac{H(X,V)}{N} \,
f^{(N)}(t,X,V) \dd X \dd V\! =\! \!\int_{(\R^3 \times \R^3)^N}\! \frac{H(X,V)}{N} \,
f^{(N)}(0,X,V) \dd X \dd V.
\] 
Dans la limite de champ moyen, cette \'egalit\'e implique la
conservation au cours du temps de la quantit\'e
\[
\mathcal H(f) = \int_{\R^3 \times \R^3} \frac{|v|^2}{2} \, f(t,x,v) \dd x \dd v+
\int_{\R^3 \times \R^3} \frac{\phi_f (t,x)}{2} \, f(t,x,v) \dd x \dd v
\]
que l'on appellera \emph{hamiltonien (macroscopique) du syst\`eme de
  Vlasov-Poisson}. Remarquons le facteur $1/2$ devant $\phi_f$ qui
rappelle que notre \'equation sur la distribution \`a une particule
$f$ est la \og trace\fg{} d'un syst\`eme \`a $N$ particules avec
interaction binaire. Cette fonctionnelle se r\'e\'ecrit en utilisant
l'\'equation de Poisson \eqref{eq:3} sur $\phi_f$:
\begin{equation}\label{eq:HamMacro}
  \mathcal H(f) = \int_{\R^3 \times \R^3} \frac{|v|^2}{2} \, f(t,x,v) 
  \dd x \dd v - 
  \int_{\R^3 \times \R^3} \frac{|\nabla_x \phi_f (t,x)|^2}{2} 
  \dd x \dd v.
\end{equation}
On voit ici la possibilit\'e de transferts d'\'energie entre \'energie
cin\'etique et gravitationnelle. De plus, le caract\`ere
\emph{attractif} de l'interaction se traduit par le fait que ces
transferts peuvent \og diverger\fg{}: la fonctionnelle $\mathcal H$
conserv\'ee est la somme de deux termes de signe oppos\'e qui peuvent
potentiellement diverger tout en s'\'equilibrant, ce qui est une
source de difficult\'es math\'ematiques.

Notons enfin que l'\'equation de Vlasov de champ moyen \eqref{eq:1}
peut \^etre interpr\'et\'ee comme l'\'equation de Liouville sur une
particule associ\'ee \`a l'\emph{hamiltonien microscopique de champ
  moyen}
\[
E = E_{f}(t,x,v) = \frac{|v|^2}{2} +
\phi_f(t,x)
\]
sous la forme 
\[
\partial_t f + \left\{ f, E_f \right\} = 0
\]
o\`u $\{ \cdot, \cdot \}$ d\'esigne ici le crochet de Poisson sur
$\R^3 \times \R^3$:
\[
\{ f, g \} = \Dx f \cdot \Dv g - \Dx g \cdot \Dv f. 
\]
Remarquons qu'il n'y a \emph{plus} le facteur $1/2$ devant l'\'energie
potentielle dans la d\'efinition de $E_f$: la cause en est que l'on
consid\`ere ici une particule \'evoluant dans un champ moyen donn\'e
en \og oubliant\fg{} la non-lin\'earit\'e.

\subsection{Fonctionnelles de Casimir et \'equimesurabilit\'e}
\label{sec:fonct-de-casim}

L'\'equation de champ moyen \eqref{eq:1} est associ\'ee aux solutions
(dites \emph{caract\'eristiques}) du syst\`eme diff\'erentiel 
\[
\left\{ 
    \begin{array}{l} \displaystyle
      x'(t) = v(t) = \frac{\partial E_{f}}{\partial v}, 
      \vspace{0.3cm} \\ \displaystyle
      v'(t) = - \nabla_x \phi_f(x(t))
      = - \frac{\partial E_f}{\partial x}.
    \end{array}
    \right.
\]
Ces courbes d\'ependent bien s\^ur de la solution elle-m\^eme, mais
cela montre que l'\'equation d'\'evolution \eqref{eq:1} est une
\emph{dynamique de r\'earrangement} de la distribution $f$, qui
pr\'eserve le signe, mais aussi toutes les fonctionnelles de Casimir 
\[
\int_{\R^3\times\R^3} \cC \left( f(t,x,v) \right) \dd x \dd v = 
\int_{\R^3\times \R^3} \cC \left( f(0,x,v) \right) \dd x \dd v
\]
pour $\cC \in C^1(\R_+,\R_+)$, $\cC(0)=0$. Cet ensemble de lois de
conservation peut \^etre synth\'etiquement exprim\'e par la
propri\'et\'e suivante: \emph{la solution reste pour tout temps
  \'equimesurable \`a sa donn\'ee initiale}
\[
\forall \, t \ge 0, \ \forall \, s \ge 0, \quad \left| \left\{
    |f(t,x,v)| \ge s \right\} \right| =\left| \left\{ |f_{in}(x,v)|
    \ge s \right\} \right| \in \R_+ \cup \{ + \infty\}
\]
o\`u $|\mathcal O|$ d\'esigne la mesure de Lebesgue d'un ensemble
mesurable $\mathcal O \subset \R^3 \times \R^3$.

Nous sommes donc en pr\'esence d'un syst\`eme dynamique en dimension
infinie \emph{poss\'edant une infinit\'e non d\'enombrable de
  contraintes}: le hamiltonien macroscopique $\cH(f)$ et
l'\'equimesurabilit\'e.

\subsection{Lien entre invariants et hamiltonien: une
  in\'egalit\'e d'interpolation clef}
\label{sec:lien-entre-fonct}

Comme nous l'avons vu, l'\'energie totale du syst\`eme $\cH(f)$ est la
diff\'erence de deux termes positifs que nous noterons
\[
\cH(f) = \cH_{cin}(f) - \cH_{pot}(f).
\]
Une difficult\'e \'evidente sera alors de contr\^oler ces deux formes
d'\'energie s\'epar\'ement. \`A cette fin, il est possible
\emph{d'interpoler} l'\'energie potentielle \`a partir de l'\'energie
cin\'etique et d'un espace de Lebesgue~$L^p$, dont la conservation est
assur\'ee par l'invariance des fonctionnelles de Casimir. On peut
ainsi montrer les in\'egalit\'es fonctionnelles suivantes.

\begin{prop}\label{prop:interpol-clef}
  Pour tout $p \in [1,+\infty]$, on a l'estimation d'int\'egrabilit\'e
  de la densit\'e spatiale 
\[
\| \rho_f \|_{L^{\frac{5p-3}{3p-1}} (\R^3)} \le C \| f \|_{L^p(\R^6)} ^{\frac{2p}{5p-3}}
\cH_{cin} (f) ^{\frac{3p-3}{5p-3}}
\]
avec dans le cas limite $p=+\infty$
\[
\| \rho_f \|_{L^{\frac{5}{3}}(\R^3)} \le C \| f \|_{L^\infty(\R^6)} ^{\frac{2}{5}}
\cH_{cin} (f) ^{\frac{3}{5}},
\]
pour une certaine constante $C>0$. 

On en d\'eduit le contr\^ole suivant de l'\'energie potentielle pour
$p >p_c = 9/7$
\[
\cH_{pot}(f) \le C \| f \|_{L^1(\R^6)} ^{\frac{7p-9}{6(p-1)}} \| f \|_{L^p(\R^6)}
^{\frac{p}{3(p-1)}} \cH_{cin}(f)^{\frac{1}{2}}
\]
pour une certaine constante $C>0$. 
\end{prop}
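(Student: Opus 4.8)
L'id\'ee est de d\'ecouper l'int\'egrale en vitesse d\'efinissant $\rho_f$ selon les petites et grandes vitesses. Pour $R>0$ et $x$ fix\'e, j'\'ecris
\[
\rho_f(x) = \int_{|v|\le R} f(x,v)\dd v + \int_{|v|>R} f(x,v)\dd v.
\]
Sur le premier morceau, l'in\'egalit\'e de H\"older en $v$ d'exposant $p$ donne $\int_{|v|\le R} f \dd v \le C\,R^{3(p-1)/p}\,\|f(x,\cdot)\|_{L^p_v}$; sur le second, le poids $|v|^2$ fournit $\int_{|v|>R} f\dd v \le R^{-2}\, m_2(x)$, o\`u $m_2(x):=\rint |v|^2 f(x,v)\dd v$. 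En optimisant en $R$ (le choix $R^{(5p-3)/p}\sim m_2(x)/\|f(x,\cdot)\|_{L^p_v}$ \'equilibre les deux termes) on obtient l'estimation ponctuelle
\[
\rho_f(x) \le C\,\|f(x,\cdot)\|_{L^p_v}^{\frac{2p}{5p-3}}\, m_2(x)^{\frac{3(p-1)}{5p-3}}.
\]

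\textbf{Int\'egration en espace.} J'\'el\`eve alors cette in\'egalit\'e \`a la puissance $q=(5p-3)/(3p-1)$ et j'int\`egre en $x$. Les exposants ont \'et\'e choisis pr\'ecis\'ement pour qu'une in\'egalit\'e de H\"older en $x$ s\'epare exactement les deux quantit\'es disponibles: $\drint f^p\ddxv = \|f\|_{L^p(\R^6)}^p$ d'une part, $\int_{\R^3} m_2(x)\dd x = 2\,\cH_{cin}(f)$ d'autre part. En effet les exposants conjugu\'es $\tfrac{2q}{5p-3}$ et $\tfrac{3(p-1)q}{5p-3}$ v\'erifient $\tfrac{2q}{5p-3}+\tfrac{3(p-1)q}{5p-3}=\tfrac{q(3p-1)}{5p-3}=1$ gr\^ace \`a la valeur pr\'ecise de $q$. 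On obtient ainsi l'estimation annonc\'ee; le cas limite $p=+\infty$ s'en d\'eduit par passage \`a la limite (ou directement, le morceau $|v|\le R$ \'etant alors major\'e par $C\,R^3\|f\|_{L^\infty}$, ce qui conduit aux exposants $2/5$ et $3/5$).

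\textbf{Seconde estimation (\'energie potentielle).} Je pars de $\cH_{pot}(f)=\tfrac12\int_{\R^3}|\nabla_x\phi_f|^2\dd x$. Une int\'egration par parties utilisant l'\'equation de Poisson $\Delta_x\phi_f=\rho_f$ donne l'\'energie newtonienne
\[
\cH_{pot}(f) = -\frac12\int_{\R^3}\phi_f\,\rho_f\dd x = \frac{1}{8\pi}\int_{\R^3}\int_{\R^3}\frac{\rho_f(x)\,\rho_f(y)}{|x-y|}\dd x\dd y.
\]
L'outil central est alors l'in\'egalit\'e de Hardy-Littlewood-Sobolev, qui pour le noyau $|x-y|^{-1}$ en dimension $3$ donne $\cH_{pot}(f)\le C\,\|\rho_f\|_{L^{6/5}(\R^3)}^2$. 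Il reste \`a contr\^oler $\|\rho_f\|_{L^{6/5}}$ par interpolation entre $L^1$ et $L^q$: on dispose de $\|\rho_f\|_{L^1}=\|f\|_{L^1(\R^6)}$ (conservation de la masse, $f\ge 0$) et de $\|\rho_f\|_{L^q}$ via la premi\`ere estimation. Cette interpolation n'est licite que si $6/5\le q$, soit $5(5p-3)\ge 6(3p-1)$, c'est-\`a-dire $7p\ge 9$: c'est exactement la condition $p>p_c=9/7$. \'Ecrivant $\tfrac56=\theta+\tfrac{1-\theta}{q}$ on trouve $\theta=\tfrac{7p-9}{12(p-1)}$ et $1-\theta=\tfrac{5p-3}{12(p-1)}$, puis
\[
\cH_{pot}(f)\le C\,\|\rho_f\|_{L^{6/5}}^2 \le C\,\|f\|_{L^1(\R^6)}^{2\theta}\,\Big(\|f\|_{L^p(\R^6)}^{\frac{2p}{5p-3}}\cH_{cin}(f)^{\frac{3(p-1)}{5p-3}}\Big)^{2(1-\theta)}.
\]
La simplification des exposants (celui de $\cH_{cin}$ se r\'eduisant \`a $\tfrac12$, celui de $\|f\|_{L^p}$ \`a $\tfrac{p}{3(p-1)}$ et celui de $\|f\|_{L^1}$, \'egal \`a $2\theta$, \`a $\tfrac{7p-9}{6(p-1)}$) fournit l'in\'egalit\'e voulue.

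\textbf{Principale difficult\'e.} Le c\oe ur analytique est l'in\'egalit\'e de Hardy-Littlewood-Sobolev; le reste rel\`eve du comptage d'exposants. Le point \`a surveiller est l'origine du seuil $p_c=9/7$: il ne provient pas d'une obstruction analytique profonde, mais de la contrainte $1\le 6/5\le q$ n\'ecessaire pour atteindre par interpolation l'exposant $6/5$ dict\'e par Sobolev, reliant ainsi finement l'int\'egrabilit\'e $L^p$ de $f$ \`a la finitude de l'\'energie potentielle.
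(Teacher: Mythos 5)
Votre preuve est correcte et suit essentiellement la m\^eme d\'emarche que celle du texte~: m\^eme d\'ecoupage en vitesse avec optimisation en $R$ puis H\"older en $x$ pour la premi\`ere in\'egalit\'e, m\^eme interpolation $L^1$--$L^{\frac{5p-3}{3p-1}}$ de $\rho_f$ avec les m\^emes exposants pour la seconde. La seule diff\'erence, mineure, est que vous passez par l'\'energie newtonienne $\iint \rho_f(x)\rho_f(y)|x-y|^{-1}\dd x \dd y$ et la forme bilin\'eaire de Hardy-Littlewood-Sobolev, l\`a o\`u le texte borne directement $\| \nabla_x \phi_f \|_{L^2}$ par $C\|\rho_f\|_{L^{6/5}}$ via la repr\'esentation $\nabla_x \phi_f = (x/(4\pi|x|^3)) \ast \rho_f$ et la m\^eme in\'egalit\'e~: ce sont deux formulations \'equivalentes du m\^eme lemme cl\'e.
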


\begin{proof}
  Pour $p>1$ et $R > 0$, on d\'ecompose l'int\'egrale de la densit\'e
  spatiale $\rho_f$ en
\begin{align*}
  \int_{\R^3} f \dd v & = \int_{|v| \leq R} f \dd v + \int_{|v| > R}
  f \dd v \\
  & \leq 
    \| f \|_{L^p _v} 
  \left( \int_{|v|\leq R} \dd v \right)^{\frac{(p-1)}{p}} + R^{- 2}
  \int_{\R^3} f |v|^2 \dd v\\
  & \leq 
  \| f \|_{L^p _v} \left( \frac{4\pi R^{3}}{3} \right)^{\frac{(p-1)}{p}}
  + R^{-2} \rint f |v|^2 \dd v.
\end{align*}
En optimisant le param\`etre $R$, on obtient 
\[\int_{\R^3} f \dd v \leq C 
\Vert f(x,\cdot) \Vert_{L^p (\R^3)}^{\frac{2p}{(5p-3)}} \left( \rint f |v|^2
  \dd v \right)^{\frac{(3p-3)}{(5p-3)}} \] 
et on en d\'eduit par in\'egalit\'e
de Cauchy-Schwarz
\begin{eqnarray*}
  \left\Vert \rho_f \right \Vert_{L^{\frac{(5p-3)}{(3p-1)}}(\R^3)} &\leq& C
  \!\left( \rint \Vert f(x,\cdot)\Vert_{L^p(\R^3)} ^{\frac{2p}{(3p-1)}}\! \left(
      \rint f |v|^2 \dd v
   \! \right)^{\frac{(3p-3)}{(3p-1)}} \dd x\! \right)^{\frac{(3p-1)}{(5p-3)}} \\
  &\leq& C \Vert f \Vert_{L^p(\R^6)}^{\frac{2p}{(5p-3)}} \cH_{cin}(f)^{\frac{(3p-3)}{(5p-3)}},
\end{eqnarray*}
ce qui conclut la preuve de la premi\`ere in\'egalit\'e. 

On veut maintenant contr\^oler le champ $\phi_f$. En raisonnant
heuristiquement, l'in\'egalit\'e de Sobolev et la r\'egularit\'e
elliptique de l'\'equation de Poisson impliqueraient 
\[
\| \Dx \phi_f \|_{L^2(\R^3)} \le C \| \Dx \phi_f \|_{W^{1,6/5}(\R^3)}
\le C \| \rho_f \|_{L^{6/5}(\R^3)}.
\]
La fonction $\nabla_x \phi_f$ n'est pas dans l'espace
$W^{1,6/5}(\R^3)$; cependant on a bien l'in\'egalit\'e $\| \Dx \phi_f
\|_{L^2(\R^3)} \le C \| \rho_f \|_{L^{6/5}(\R^3)}$ en utilisant la
formule $\nabla_x \phi_f = (x/(4 \pi |x|^3)) \ast \rho_f$ et
l'in\'egalit\'e de Hardy-Littlewood-Sobolev \cite[Theorem~4.3]{LL}. De
mani\`ere alternative, il est possible de montrer que $\nabla_x \phi_f
\in \dot W^{1,6/5}$ (l'espace de Sobolev homog\`ene) au moyen du
contr\^ole sur le laplacien de $\phi_f$ et de la th\'eorie de
Calder\'on-Zygmund, puis d'utiliser l'in\'egalit\'e de
Gagliardo-Nirenberg-Sobolev pour conclure.

En combinant ces deux pr\'ec\'edentes in\'egalit\'es, on obtient 
\[
\cH_{pot}(f) = \| \nabla_x \phi_f \|_{L^2(\R^3)} ^2 \le C \| \rho_f
  \|_{L^{6/5}(\R^3)} ^2.
\]
Afin d'estimer ce membre de droite on effectue l'interpolation
suivante: pour \mbox{$p>p_c=9/7$,} on a $(5p-3)/(3p-1) > 6/5$ et
\begin{eqnarray*}
  \| \rho_f \|_{L^{6/5}(\R^3)} &\le& \| \rho_f \|_{L^1 (\R^3)} ^{\frac{(7p-9)}{(12p-12)}} \|
  \rho_f \|_{L^{\frac{(5p-3)}{(3p-1)}}(\R^3)} ^{\frac{(5p-3)}{(12p-12)}} 
  \\ &\le& \| f \|_{L^1 (\R^6)} ^{\frac{(7p-9)}{(12p-12)}} \|
  \rho_f \|_{L^{\frac{(5p-3)}{(3p-1)}}(\R^6)} ^{\frac{(5p-3)}{(12p-12)}}.
\end{eqnarray*}

On combine alors toutes les in\'egalit\'es pr\'ec\'edentes pour
obtenir (toujours lorsque $p>p_c$)
\[
\cH_{pot}(f) \le C \| f \|_{L^1 (\R^6)} ^{\frac{7p-9}{6(p-1)}} \| f \|_{L^p (\R^6)}
^{\frac{p}{3(p-1)}} \cH_{cin}(f)^{\frac{1}{2}}
\]
ce qui termine la d\'emonstration de la seconde in\'egalit\'e.
\end{proof}

Remarquons que l'\'energie potentielle peut \^etre ainsi contr\^ol\'ee
par interpolation entre les invariants du syst\`eme et l'\'energie
cin\'etique, \emph{avec une puissance strictement inf\'erieure \`a $1$
  de l'\'energie cin\'etique} (plus pr\'ecis\'ement $1/2$ dans notre
cas). Par analogie avec les \'equations aux d\'eriv\'ees partielles
dispersives et en particulier l'\'equation de Schr\"odinger, dans le
cas d'une telle \'equation o\`u le hamiltonien est compos\'e de deux
termes de signe oppos\'e qui peuvent diverger tout en s'\'equilibrant,
on parle d'\emph{\'equation focalisante}; et lorsque l'on peut
\'etablir un tel contr\^ole de l'\'energie potentielle, on parle
d'\'equation \emph{sous-critique}. Lorsqu'un tel contr\^ole est
possible mais avec une puissance $1$ de l'\'energie potentielle, on
parle d'\'equation \emph{critique}. Lorsqu'un tel contr\^ole n'est
possible qu'avec une puissance strictement plus grande que $1$, on
parle d'\'equations \emph{sur-critiques}, pour lesquelles on s'attend en
g\'en\'eral \`a des ph\'enom\`enes d'explosion en temps fini.

Dans le cas du syst\`eme de Vlasov-Poisson en dimension $3$ qui est
sous-critique, comme le sugg\`ere le r\'esultat que l'on vient
d'\'etablir, la difficult\'e principale pour montrer des
propri\'et\'es de stabilit\'e est le contr\^ole de l'\'energie
cin\'etique.  Nous renvoyons \`a \cite{LMR1,LMR2,LMR4,LMR5,LMR7} pour
une \'etude syst\'ematique de la formation de singularit\'es dans des
cas critiques (dimension $4$ ou dimension $3$ relativiste), ainsi
que pour des commentaires plus pr\'ecis sur le parall\`ele avec les
\'equations de Schr\"odinger. Ces mod\`eles critiques n'ont
toutefois pas d'interpr\'etation claire au niveau physique. Nous
nous concentrerons dans cet expos\'e sur les r\'esultats qui
concernent le syst\`eme \eqref{eq:1}-\eqref{eq:2} en dimension $3$.

\subsection{La th\'eorie de Cauchy}
\label{sec:la-therorie-de}

La th\'eorie de Cauchy est relativement d\'evelopp\'ee pour cette
\'equation non-lin\'eaire. C'est m\^eme l'un des rares syst\`emes
non-lin\'eaires fondamentaux de la physique pour lequel on sait
construire des solutions globales et uniques sans hypoth\`ese
perturbative en dimension $3$. Il est facile de se convaincre des
points suivants:
\begin{itemize} 
\item on souhaite montrer une r\'egularit\'e suffisante sur le champ
  de force moyen pour pouvoir d\'efinir des courbes caract\'eristiques
  pour les \'equations diff\'erentielles ordinaires correspondantes et
  \'etablir des contr\^oles sur ces derni\`eres;
\item cette r\'egularit\'e sur le champ d\'epend du contr\^ole sur des
  \emph{moments en vitesse} $\int f |v|^k \dd x \dd v$ de la solution,
  donc en d\'efinitive du comportement de la distribution pour les
  grandes vitesses.
\end{itemize}

Le contr\^ole de ces \og particules \`a grandes vitesses\fg{} a
finalement \'et\'e obtenu de deux mani\`eres diff\'erentes il y a une
vingtaine d'ann\'ees. D'une part, Pfaffelmoser \cite{Pf} a d\'ecouvert
comment propager en temps une borne sur la taille du support de la
solution, ce qui implique le contr\^ole recherch\'e et permet de
construire des solutions globales $C^1$ \`a support compact. D'autre
part et ind\'ependamment, Lions et Perthame \cite{LP} ont d\'ecouvert
comment propager en temps des bornes directement sur les moments en
vitesse de toute solution dans $L^\infty$ et de hamiltonien born\'e,
ce qui leur permet de construire des solutions uniques et globales
dans $L^1((1+|v|^k) \dd x \dd v) \cap L^\infty$ pour $k$ suffisamment
\'elev\'e, modulo une hypoth\`ese assez faible de r\'egularit\'e
initiale sur la densit\'e spatiale $\rho_{in}$. Nous renvoyons ensuite
aux articles \cite{Sc1,BR,Ho-ii,Wo-ii,Ro,GJP,Lo,MMP,Pa} pour les
d\'eveloppements ult\'erieurs de ces th\'eories.

Par ailleurs, sous des hypoth\`eses plus faibles sur les donn\'ees
initiales
\[
f_{in} \ge 0, \quad f_{in} \in L^1 \cap L^\infty(\R^6), \quad |v|^2 \,
f_{in} \in L^1(\R^6), 
\]
des solutions faibles globales ont \'et\'e construites par Arsen'ev
\cite{Ar} (voir \'egalement \cite{IN,HH}); cependant leur unicit\'e
reste ouverte. Ces solutions sont \'egalement des solutions
renormalis\'ees au sens de DiPerna-Lions \cite{DL1,DL2} et on peut
leur associer des caract\'eristiques g\'en\'eralis\'ees au sens de
DiPerna-Lions \cite{DL3}. Elles sont continues en temps \`a valeur
dans $L^1(\R^6)$ et v\'erifient l'in\'egalit\'e suivante
\[
\fa t \ge 0, \quad \cH(f_t) \le \cH(f_{in})
\]
(la conservation exacte du hamiltonien est ouverte).  Les
th\'eor\`emes de Lemou, M\'ehats et Rapha\"el que nous pr\'esenterons
dans la derni\`ere section ont pour objet ces solutions faibles.

\subsection{L'\'etude des solutions stationnaires}
\label{sec:letude-des-solutions}

Nous cherchons maintenant les solutions stationnaires du syst\`eme
\eqref{eq:1}-\eqref{eq:2}. Il en existe en fait une infinit\'e, dont
nous ne connaissons et ne savons caract\'eriser de mani\`ere
satisfaisante qu'une petite partie. Une telle solution $f^0$ doit
v\'erifier
\[
\left\{ 
\begin{array}{ll}
\forall \, x,v \in \R^3, \quad &v \cdot \nabla_x f^0 - \nabla_x U(x)
\cdot \nabla_v
f^0 =0, \vspace{0.3cm} \\ \displaystyle
\forall \, x \in \R^3, \quad &\Delta_x U = \rho_{f^0}.
\end{array}
\right.
\]

On voit que la premi\`ere \'equation incite \`a consid\'erer une
fonction $f^0$ ne d\'ependant que des invariants du mouvement du
syst\`eme diff\'erentiel $x''(t) = - \nabla_x U$, au moins si ce
dernier est compl\`etement int\'egrable. Cependant, on ne conna\^it
rien \textit{a priori} sur ce syst\`eme diff\'erentiel, puisqu'il
d\'epend de la solution $f^0$ recherch\'ee elle-m\^eme.

\subsubsection{Le th\'eor\`eme de Jeans} 
\label{subsec:le-theoreme-de}

On sait n\'eanmoins r\'esoudre ce probl\`eme dans le cas de solutions
stationnaires dites \`a \og sym\'etrie sph\'erique\fg{}. Cela
correspond \`a l'invariance de la solution stationnaire par les
rotations du r\'ef\'erentiel, soit
\[
\forall \, (x,v) \in \R^6, \quad f^0(Ax,Av) = f^0(x,v)
\]
pour toute matrice $3 \times 3$ orthogonale $A$. Cela implique donc
que $f^0 = f^0(|x|,|v|,x \cdot v)$. Les \'equations du mouvement 
\[
\left\{ 
\begin{array}{l} 
x'(t) = v, \vspace{0.3cm} \\
v'(t) = - \nabla_x U = - \frac{x}{|x|} \, U'(|x|)
\end{array} 
\right.
\]
poss\`edent alors \emph{toujours} (c'est-\`a-dire ind\'ependamment de $f^0$) un
invariant suppl\'e\-mentaire, en plus de l'\'energie~:
\[
\td{}{t} \, \left( x(t) \wedge v(t) \right) = v(t) \wedge v(t) - \frac{U'(|x|)}{|x|}
\, x(t) \wedge x(t) = 0. 
\]
Le syst\`eme est alors compl\`etement int\'egrable (l'espace des
phases poss\`ede trois degr\'es de libert\'e), et l'on peut montrer
que les \'equilibres s'\'ecrivent sous la forme
\begin{equation}\label{eq:jeans}
\left\{
\begin{array}{l}
  f^0(x,v) = f^0(|x|,|v|,x \cdot v) = F\left(E, L\right),
  \vspace{0.3cm} \\ \displaystyle
  E(x,v) := E_{f^0}(x,v) = \frac{|v|^2}{2} +
  \phi_{f^0}(t,x), \vs \\ \ds
  L(x,v) := | x \wedge v|^2.
\end{array}
\right.
\end{equation}
C'est le \emph{th\'eor\`eme de Jeans} \cite{Je1,Je2} appliqu\'e aux
syst\`emes sph\'eriques (voir la discussion \cite[section~4.4]{BT}).
Un cadre et une d\'emonstration math\'ematiques rigoureux peuvent
\^etre consult\'es dans \cite{BFH}, fond\'es sur la r\'esolution des
\'equations diff\'erentielles non-lin\'eaires radiales
correspondantes.

Lorsque $F = F(E)$ ne d\'epend que de $E$, on parle de mod\`eles
sph\'eriques \emph{isotropes}, et lorsque $F=F(E,L)$, on parle de
mod\`eles sph\'eriques \emph{anisotropes}. Ces termes s'expliquent par
le fait que la matrice (tenseur) de dispersion en vitesse
\[
\frac{1}{\rho_f} \rint v_i v_j F(E,L) \dd v, \quad 1 \le i,j, \le 3
\]
est un multiple de la matrice identit\'e lorsque $F=F(E)$ alors que,
par exemple, ses composantes diagonales sont diff\'erentes lorsque
$F=F(E,L)$ d\'epend de $L$ (cf. \`a nouveau, ici et pour la suite de
cette section, \cite[section~4.4]{BT}). 

Dans le cadre de tels syst\`emes \`a sym\'etrie sph\'erique,
l'\'equation fondamentale qui gouverne les \'equilibres de syst\`emes
stellaires est
\begin{equation}\label{eq:stat}
\Delta_x \phi = \rho \quad \Longrightarrow \quad \frac{1}{r^2} \left(
  r^2 \phi' \right)' = \drint F\left( \frac{|v|^2}{2} + \phi, |x \wedge
  v|^2\right) \ddxv.
\end{equation}
C'est une \'equation elliptique que l'on peut voir comme une
\'equation non-lin\'eaire sur le potentiel $\phi$ \'etant donn\'e le
profil de l'\'equilibre sph\'erique $F$, ou bien comme une \'equation
non-lin\'eaire sur $F$ \'etant donn\'e $\phi$.

\subsubsection{Les mod\`eles polytropiques}
\label{sec:les-model-polytr}

Supposons tout d'abord $F=F(E)$ isotrope, et partons de la formule la
plus simple pour obtenir un profil $F$ \`a support compact,
c'est-\`a-dire une fonction puissance tronqu\'ee:
\begin{equation}\label{eq:poly}
f(x,v) = F(E) = C_F (E_0 - E)_+ ^n, \quad n \in \R, 
\end{equation}
o\`u $z_+ := \max \{ z, 0 \}$ d\'esigne la partie positive d'un r\'eel
$z \in \R$, et $C_F$ est une constante donn\'ee. Un calcul
\'el\'ementaire montre que, par r\'esolution de l'\'equation
\eqref{eq:stat}, la densit\'e spatiale associ\'ee est
\[
\rho = c_n (E_0 -\phi)_+ ^{n+3/2}
\]
avec une constante $c_n$ finie si $n >-1$ et la formule simple lorsque
$n \ge 0$: 
\[
c_n := C_F \frac{2^{3/2} \pi n !}{(n+3/2)!}. 
\]
On voit ainsi que ces mod\`eles ne peuvent \emph{jamais \^etre
  r\'epartis de mani\`ere homog\`ene} car une densit\'e spatiale
constante impliquerait $n=-3/2$. L'\'equation obtenue alors sur le
potentiel est l'\'equation dite de \emph{Lane-Emden} 
\[
\frac{1}{r^2} \td{}{r} \left( r^2 \td{}{r} \phi \right) = c_n (E_0
-\phi)_+ ^{n+3/2}.
\]
On parle de mod\`eles \emph{polytropiques} car la distribution spatiale
est alors la m\^eme que pour un gaz auto-gravitant v\'erifiant la loi
de pression des gaz polytropes $p = \mbox{cste} \, \rho^\gamma$ avec
$\gamma = 1+1/(n+3/2)$.  Le cas $n>7/2$ conduit \`a des solutions de
masse infinie et \`a support non compact en ajustant le param\`etre
$E_0$. L'\'equation sur le potentiel n'admet pas de solution simple en
terme de fonctions \'el\'ementaires en dehors du cas $n=-1/2$
(\'equation lin\'eaire de Helmhotz). Le cas limite $n=7/2$ est
int\'eressant car on obtient une masse finie mais un support non
compact; c'est le mod\`ele dit \emph{de Plummer}. 

\subsubsection{Le mod\`ele de King}
\label{sec:le-modele-de}

D'apr\`es les observations, l'\'energie est tr\`es concentr\'ee au
centre des galaxies. Cela incite \`a consid\'erer de grandes valeurs
de $n$ dans le mod\`ele polytropique. Consid\'erons ainsi tout d'abord
formellement le cas limite \og $n=\infty$\fg{}. Bien s\^ur les
\'equations que nous avons utilis\'ees pour trouver la solution
stationnaire ci-dessus ne font plus sens, mais si on suit le
parall\`ele avec les gaz auto-gravitants, cela doit correspondre au
cas d'un gaz v\'erifiant la loi de pression $p = \mbox{cste} \;
\rho^\gamma$ avec $\gamma=1$, c'est-\`a-dire un gaz isotherme. En
suivant les consid\'erations usuelles de m\'ecanique statistique, on
obtient des distributions de loi exponentielle. C'est le mod\`ele dit
de la \emph{sph\`ere isotherme}. Il est cependant de masse infinie et
de support non compact. La litt\'erature physique a donc introduit
\cite{Ki} la troncature suivante
\begin{equation}\label{eq:king}
f = F(E) = \left( e^{E_0-E} - 1\right)_+
\end{equation}
que l'on appelle \emph{mod\`ele de King}. Ce mod\`ele reproduit les
fortes valeurs d'\'energie observ\'ees pour les petits rayons, tout en
\'etant compatible avec les contraintes de masse et support. Ce
mod\`ele sera l'un des protagonistes principaux des travaux
math\'ematiques r\'ecents, car c'est peut-\^etre le mod\`ele
sph\'erique isotrope le plus important d'un point de vue physique, et
l'\'etude de sa stabilit\'e \'echappe aux techniques variationnelles
fond\'ees sur les fonctionnelles d'\'energie-Casimir. Une des
r\'eussites marquantes du programme de recherche de Lemou, M\'ehats et
Rapha\"el est d'obtenir dans \cite{LMR9} la stabilit\'e de ces
mod\`eles pour des perturbations g\'en\'erales.

Remarquons ici que l'on voit appara\^itre en filigrane un
ph\'enom\`eme myst\'erieux, \`a savoir le fait que les \'equilibres
sont analys\'es par des consid\'erations de m\'ecanique statistique
\emph{collisionnelle}, soit irr\'eversible. Comprendre pourquoi cette
d\'emarche produit des r\'esultats corrects, au moins partiellement,
rejoint la question fondamentale de comprendre comment les galaxies se
\og thermalisent\fg{} sans collision. C'est l'objet des th\'eories
d'amortissement et de relaxation violente initi\'ees par Lynden-Bell
\cite{L-B1,L-B2}.

\subsubsection{Autres mod\`eles}
\label{sec:autres-modeles}

Si l'on se prescrit au d\'epart la densit\'e spatiale et que l'on
cherche \`a calculer $F$, on obtient la formule dite de Eddington
\cite{Ed}, dont les deux mod\`eles c\'el\`ebres qui en sont issus sont
le mod\`ele dit $R^{1/4}$ et l'\emph{isochrone d'H\'enon}. Le cas
anisotrope est moins clair, mais l'on peut citer le mod\`ele dit
\emph{d'Osipkov-Merritt} qui g\'en\'eralise le mod\`ele $R^{1/4}$,
ainsi que le mod\`ele dit \emph{de Michie} qui g\'en\'eralise le mod\`ele
de King (nous renvoyons \`a \cite[Chapitre 4]{BT}).

\subsection{La conjecture de stabilit\'e pour les mod\`eles
  sph\'eriques}
\label{sec:la-conjecture-de}

Nous devons d'abord discuter de la notion de stabilit\'e
utilis\'ee. Au vu des invariants du syst\`eme (fonctionnelles de
Casimir et hamiltonien macroscopique), il est naturel de mesurer la
stabilit\'e de la mani\`ere suivante: pour tout $\varepsilon>0$, il
existe $\eta>0$ tel que  
\begin{multline}
  \mathcal D(f_{in}, f^0) := \left\| f_{in} - f^0 \right\|_{L^1(\R^6)}
  + \left| \mathcal H(f_{in}) - \mathcal H(f^0) \right| \le \eta \\
  \quad \Longrightarrow \quad \forall \, t \ge 0, \ \mathcal
  D(f_t,f^0) 
  \le \varepsilon.
\end{multline}
(Les fonctionnelles de Casimir sont correctement captur\'ees par les espaces
de Lebesgue en terme de distance, et le choix de $L^1 (\R^6)$ semble
arbitraire, mais sera de toute fa\c con compl\'et\'e plus loin par une
hypoth\`ese de borne $L^\infty (\R^6)$.) 

Cependant, il faut tenir compte du groupe de sym\'etries du syst\`eme
d'\'evolution et voir si ces derni\`eres sont \og captur\'ees\fg{} par
la distance utilis\'ee pour mesurer la stabilit\'e. Il s'av\`ere que
le syst\`eme de Vlasov-Poisson \eqref{eq:1}-\eqref{eq:2} admet un
large groupe de sym\'etries: si $f(t,x,v)$ est solution, alors pour
tous $t_0 \in \R$, $x_0 \in \R^3$, $\lambda_0 \in \R_+^*$, $\mu_0 \in
\R_+ ^*$
\[
g(t,x,v) := \frac{\mu_0}{\lambda_0^2} \, f\left( \frac{t
    +t_0}{\lambda_0 \, \mu_0}, \frac{x+x_0}{\lambda_0}, \mu_0 \, v
\right)
\]
est encore solution, et on a \'egalement \emph{l'invariance par
  translation galil\'eenne} (ce qui ne fait que traduire sur ce
mod\`ele statistique l'invariance galil\'eenne correspondante de la
m\'ecanique classique): si $f(t,x,v)$ est solution, alors pour
tout $v_0 \in \R^3$
\[
g(t,x,v) := f(t,x+tv_0, v+v_0)
\]
est encore solution. Il est clair que la \og distance\fg{} utilis\'ee
$\mathcal D$ est \'egalement invariante par ces transformations
lorsque $\lambda_0 = \mu_0 = 1$. Nous devons donc reformuler la
stabilit\'e de la mani\`ere suivante: il existe une fonction de
translation $z=z(t) \in \R^3$ telle que
\begin{equation}
  \mathcal D(f_{in}, f^0) \le \eta 
  \quad \Longrightarrow \quad \forall \, t \ge 0, \ \mathcal
  D(f_t,f^0(\cdot -z(t),
  \cdot))
\le \varepsilon.
\end{equation}
C'est la notion de \emph{stabilit\'e orbitale}. Ajoutons que, dans le
cas de perturbations \`a sym\'etrie sph\'erique, le param\`etre $z(t)$
est n\'ecessairement toujours nul.

Pour les solutions stationnaires \`a sym\'etrie sph\'erique, il est
tr\`es largement discut\'e dans la litt\'erature physique \cite{BT}
que la stabilit\'e est li\'ee \`a la monotonie par rapport \`a
l'\'energie microscopique $E=E_{f^0}(x,v)$. Plus pr\'ecis\'ement, on
peut formuler les conjectures suivantes:
\begin{conj} \label{conj1} Les galaxies $f^0= F(E,L)$ \`a sym\'etrie
  sph\'erique anisotropes et d\'ecroissantes ($\partial_E F
  <0$ sur le support de $F$) sont orbitalement stables par
  perturbation \`a sym\'etrie sph\'erique pour le syst\`eme
  d'\'evolution \eqref{eq:1}-\eqref{eq:2}.
  \end{conj}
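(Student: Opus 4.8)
The plan is to prove orbital stability via a variational argument based on generalized energy-Casimir functionals, exploiting the fact that the equilibrium $f^0 = F(E,L)$ is a minimizer (or critical point) of an appropriate conserved functional. The key structural input is the subcriticality established in Proposition~\ref{prop:interpol-clef}: since $\mathcal{H}_{pot}$ is controlled by $\mathcal{H}_{cin}^{1/2}$ (a power strictly less than $1$), the total Hamiltonian $\mathcal{H}$ cannot degenerate catastrophically, and the kinetic energy admits \emph{a priori} control along the flow.

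\medskip

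\textbf{Construction of the Lyapunov functional.} First I would exploit the monotonicity hypothesis $\partial_E F < 0$ to construct, for the given decreasing profile $F$, a convex Casimir function $\mathcal{C}$ such that $f^0$ is a \emph{critical point} of the augmented functional
\begin{equation*}
  \mathcal{J}(f) := \mathcal{H}(f) + \int_{\R^3 \times \R^3} \mathcal{C}(f) \dd x \dd v
\end{equation*}
restricted to the manifold of functions equimesurable to $f^0$. Because the evolution \eqref{eq:1}-\eqref{eq:2} preserves both $\mathcal{H}$ and equimesurability, $\mathcal{J}(f_t)$ is (formally) conserved, and it suffices to establish that $f^0$ is a \emph{strict local minimizer} of $\mathcal{J}$ on the equimesurable class. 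The condition $\partial_E F < 0$ is precisely what makes $\mathcal{C}$ convex: inverting the relation between $F$ and $E$ on the support yields a Casimir whose second derivative has the correct sign, so that the second variation of $\mathcal{J}$ at $f^0$ is nonnegative. This is the mechanism whereby Antonov's linearized stability criterion is promoted to a genuine coercivity statement.

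\medskip

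\textbf{Coercivity and the main obstacle.} The heart of the proof is to show that $\mathcal{J}(f) - \mathcal{J}(f^0)$ controls $\mathcal{D}(f, f^0)$ (modulo the translation $z(t)$), i.e. a coercivity estimate of the form $\mathcal{J}(f) - \mathcal{J}(f^0) \geq c\, \mathcal{D}(f, f^0)^2 - (\text{lower order})$. \textbf{The hard part will be} the second variation near $f^0$, which is exactly the operator whose positivity Antonov analyzed. The difficulty is twofold: the quadratic form is only \emph{weakly} positive, with a kernel generated by the symmetries of the system (the spatial translations, hence the unavoidable shift $z(t)$), and the focusing/attractive nature of the gravitational interaction means the negative potential term $-\mathcal{H}_{pot}$ fights against the convex Casimir contribution. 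Closing the coercivity estimate requires quantifying Antonov's spectral gap away from the symmetry kernel, and here the subcritical interpolation inequality is essential: it prevents the negative term from overwhelming the positive contributions. Because the spherical symmetry of the perturbation kills the translation mode, in the symmetric case one expects $z(t) \equiv 0$ and the coercivity becomes unconditional on the symmetric subspace.

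\medskip

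\textbf{Conclusion by continuation.} Finally, I would close the argument by a standard continuity/bootstrap scheme: given $\varepsilon > 0$, choose $\eta$ small so that $\mathcal{J}(f_{in}) - \mathcal{J}(f^0)$ is small by the data-closeness assumption $\mathcal{D}(f_{in}, f^0) \leq \eta$; then conservation of $\mathcal{J}$ along the flow together with the coercivity estimate propagates smallness of $\mathcal{D}(f_t, f^0(\cdot - z(t), \cdot))$ for all $t \geq 0$. The translation $z(t)$ is determined by projecting off the symmetry kernel at each time, using a suitable modulation (implicit function) argument to fix the spatial center of mass. The anisotropic dependence on $L = |x \wedge v|^2$ enters only through the structure of the admissible Casimir and does not obstruct the scheme, since $L$ is itself conserved along the characteristic flow for the radial potential.
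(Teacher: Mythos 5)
Your proposal is essentially the classical energy-Casimir strategy (Arnold, Wolansky, Guo--Rein) augmented by the equimeasurability constraint, and it contains genuine gaps at exactly the points where that strategy historically failed --- which is why the paper's resolution (Lemou--M\'ehats--Rapha\"el \cite{LMR8}, whose ideas are detailed here for the isotropic companion statement) takes a different route. First, your central claim that $\partial_E F < 0$ makes $\cC$ convex ``so that the second variation of $\mathcal J$ at $f^0$ is nonnegative'' reverses the logic: the second variation of the energy-Casimir functional is the Antonov free energy $\cF(h) = -\int |h|^2/F'(E) - \int |\nabla_x \phi_h|^2$, a \emph{difference} of two positive terms (the convex Casimir contributes only the first). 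Its nonnegativity is not a consequence of convexity; it is Antonov's theorem, valid only for dynamically accessible perturbations $h = \{g, f^0\}$ (proposition~\ref{prop:Antonov-simple}), and promoting it to a quantitative coercivity off that restricted class, together with control of the third- and higher-order Taylor remainders of $\cH_\cC$ for non-polytropic profiles (King, Michie), is precisely the obstruction that blocked Wan and required Guo--Lin \cite{GL} to introduce convex duality and finite-rank approximations of the averaging projector --- and even then only for a partial result. Relatedly, your appeal to the interpolation inequality of proposition~\ref{prop:interpol-clef} to ``prevent the negative term from overwhelming the positive contributions'' is a misattribution: subcriticality gives a priori control of the kinetic energy along the flow, not a spectral gap; the naive interpolation route to linear stability (\cite{BMR}) requires $\int F'(E)\, \mathrm dx \, \mathrm dv < \infty$, which excludes the physically relevant models.

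Second, there is a structural flaw in the conservation argument: the perturbed data $f_{in}$ of the conjecture is \emph{not} equimeasurable to $f^0$ (it is only $L^1$-close with nearly equal Hamiltonian), so the flow keeps $f_t$ on the equimeasurability leaf of $f_{in}$, not on that of $f^0$. Conservation of your functional $\mathcal J$ ``restricted to the manifold of functions equimeasurable to $f^0$'' is therefore vacuous for the actual solution; restricting to perturbations on the leaf of $f^0$ would only reprove the much weaker Guo--Rein result \cite{GR3}. This leaf mismatch is exactly what the paper's mechanism is built to handle: one rearranges $f$ with respect to its \emph{own} microscopic energy, proves the monotonicity $\cH(f) \ge \cJ_{f^*}(\phi_f) \ge \cH(\hat f)$ (proposition~\ref{prop:mon-ham}), controls the leaf dependence by $\cJ_{f^*}(\phi) - \cJ_{f^0}(\phi) \ge - \| \phi_f \|_{L^\infty(\R^3)} \| f^* - (f^0)^* \|_{L^1(\R^6)}$ (a quantity conserved up to the initial $L^1$ distance, by contractivity of rearrangement), and is thereby reduced to a coercivity problem for a functional of the \emph{potential alone} on $\R^3$ (proposition~\ref{prop:coercJ}), where the generalized Antonov inequality is proved by a Poincar\'e/Hardy-type argument and where separation of minimizers and compactness of generalized minimizing sequences (proposition~\ref{prop:suites}) actually hold. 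Without this reduction --- or some substitute for it --- your bootstrap scheme has no conserved coercive quantity to propagate, and the proof does not close.
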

  \begin{conj} \label{conj2} Les galaxies $f^0= F(E)$ \`a
      sym\'etrie sph\'erique isotropes et d\'ecroissantes
    ($\partial_E F = F' <0$ sur le support de $F$) sont
    orbitalement stables par perturbation g\'en\'erale pour
    le syst\`eme d'\'evolution \eqref{eq:1}-\eqref{eq:2}.
\end{conj}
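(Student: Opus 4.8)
Le plan est variationnel. Je me concentrerai sur la Conjecture~\ref{conj2} (cas isotrope), la plus compl\`etement r\'esolue, en indiquant \`a la fin l'adaptation au cas anisotrope de la Conjecture~\ref{conj1}. L'id\'ee directrice est de caract\'eriser l'\'equilibre $f^0 = F(E)$ comme l'unique minimiseur, \emph{modulo le groupe de sym\'etries}, du hamiltonien $\cH$ sur la classe d'\'equimesurabilit\'e
\[
\mathrm{Eq}(f^0) := \left\{ f \ge 0 : \ \fa s \ge 0, \ |\{ f \ge s \}| = |\{ f^0 \ge s \}| \right\}.
\]
Puisque le flot \eqref{eq:1} pr\'eserve l'\'equimesurabilit\'e et v\'erifie $\cH(f_t) \le \cH(f_{in})$, toute donn\'ee proche de $f^0$ engendre une solution qui reste dans un voisinage de $\mathrm{Eq}(f^0)$ avec une \'energie proche du minimum. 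La stabilit\'e orbitale d\'ecoulera alors d'un \emph{principe de minimisation quantitatif}.

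J'\'etablirais d'abord l'in\'egalit\'e de r\'earrangement: pour tout $f \in \mathrm{Eq}(f^0)$,
\[
\cH(f) \ge \cH(f^0),
\]
avec \'egalit\'e si et seulement si $f = f^0(\cdot - x_0, \cdot)$ pour un $x_0 \in \R^3$. Le m\'ecanisme est le suivant: \`a potentiel $\phi$ fix\'e, l'\'energie totale d\'ecro\^it lorsqu'on r\'earrange $f$ de fa\c con d\'ecroissante par rapport \`a l'\'energie microscopique $|v|^2/2 + \phi$, et la monotonie $F'<0$ est pr\'ecis\'ement la condition pour que $f^0$ co\"incide avec son propre r\'earrangement selon $E_{f^0}$, donc soit un point critique. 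Le contr\^ole de la non-lin\'earit\'e --- le fait que $\phi = \phi_f$ d\'epende de $f$ --- repose sur l'in\'egalit\'e sous-critique de la Proposition~\ref{prop:interpol-clef}, qui borne $\cH_{pot}$ par une puissance $1/2$ de $\cH_{cin}$ et garantit que les suites minimisantes ont une \'energie cin\'etique born\'ee.

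Je montrerais ensuite une \emph{coercivit\'e locale} (in\'egalit\'e d'Antonov g\'en\'eralis\'ee): il existe $c>0$ tel que, pour $f \in \mathrm{Eq}(f^0)$ proche de $f^0$,
\[
\cH(f) - \cH(f^0) \ge c \, \inf_{x_0 \in \R^3} \| f - f^0(\cdot - x_0, \cdot) \|_{L^1(\R^6)} ^2.
\]
La d\'eg\'en\'erescence par translation se traite par \emph{modulation}: on choisit $z(t)$ de sorte \`a annuler la direction neutre, le mode de translation $\Dx f^0$, puis on estime $z'(t)$ s\'epar\'ement. En combinant cette coercivit\'e avec la d\'ecroissance de $\cH$ et la pr\'eservation de l'\'equimesurabilit\'e, on fermerait l'estimation et l'on obtiendrait $\mathcal D(f_t, f^0(\cdot - z(t), \cdot)) \le \varepsilon$ d\`es que $\mathcal D(f_{in}, f^0) \le \eta$.

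La difficult\'e principale sera la \emph{compacit\'e} sur $\R^6$ tout entier: le long d'une suite minimisante, il faudra emp\^echer la masse de partir \`a l'infini ou de se concentrer. La Proposition~\ref{prop:interpol-clef} fournit la borne sur $\cH_{cin}$ qui exclut l'effondrement, mais il restera \`a \'ecarter la dichotomie et l'\'evanouissement par une analyse de concentration-compacit\'e. Pour le mod\`ele de King, l'obstacle sp\'ecifique est que le Casimir g\'en\'erateur n'est pas globalement convexe, de sorte que la m\'ethode d'\'energie-Casimir classique \'echoue; la formulation par r\'earrangement contourne cet \'ecueil en encodant \emph{simultan\'ement} toutes les contraintes de Casimir, au prix d'\'etablir l'in\'egalit\'e stricte de r\'earrangement et la coercivit\'e pr\`es de $f^0$ \`a partir de la seule convexit\'e locale --- c'est l\`a que r\'esidera le c{\oe}ur technique. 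Pour la Conjecture~\ref{conj1}, le m\^eme sch\'ema s'appliquerait en r\'earrangeant conjointement selon $(E,L)$, l'invariant $L = |x \wedge v|^2$ \'etant pr\'eserv\'e par les caract\'eristiques \`a sym\'etrie sph\'erique, la seule hypoth\`ese $\partial_E F < 0$ gouvernant \`a nouveau la minimisation.
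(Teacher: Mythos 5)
Votre proposition suit, dans son esprit, la même stratégie que la preuve de \cite{LMR9} exposée à la section~\ref{sec:nouv-appr-vari} (équimesurabilité, réarrangement selon l'énergie microscopique, coercitivité de type Antonov, modulation, compacité), mais ses deux étapes centrales, telles que vous les formulez, comportent des lacunes réelles. La première est votre point de départ: l'affirmation que $f^0$ est minimiseur \emph{global} de $\cH$ sur $\mathrm{Eq}(f^0)$, avec égalité exactement pour les translatés. Le mécanisme que vous invoquez ne livre pas cela. Le réarrangement à potentiel fixé donne (proposition~\ref{prop:mon-ham}) $\cH(f) \ge \cJ_{f^*}(\phi_f) \ge \cH(\hat f)$ avec $\hat f = f^{*\phi_f}$; or $\hat f$ n'est ni $f^0$ ni un translaté de $f^0$, et comparer cette quantité à $\cH(f^0)$ est précisément le point dur. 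Dans \cite{LMR9}, cette comparaison n'est faite que \emph{localement}, et seulement après la réduction à une fonctionnelle $\cJ$ agissant \emph{sur le seul potentiel} $\phi$ (proposition~\ref{prop:coercJ}: développement de Taylor du hamiltonien réduit, opérateur $\cL$, inégalité de type Poincaré/Hardy, noyau identifié aux translations, puis modulation). La propriété de séparation globale des minimiseurs équimesurables que vous postulez n'est ni démontrée dans \cite{LMR9} ni nécessaire --- le théorème~\ref{theo:final} est un énoncé local --- et c'est exactement la propriété qui fait défaut aux approches variationnelles antérieures; votre schéma la suppose au lieu de la contourner, alors que la réduction au potentiel, absente de votre plan, est ce qui rend la coercitivité démontrable.

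La seconde lacune est rédhibitoire pour fermer l'argument dynamique: votre inégalité de coercitivité n'est énoncée que pour $f \in \mathrm{Eq}(f^0)$, mais le flot ne préserve pas cette classe. La solution $f_t$ reste équimesurable à $f_{in}$, qui est seulement proche de $f^0$ en norme $L^1$; donc en général $f_t \notin \mathrm{Eq}(f^0)$ et votre inégalité ne s'applique jamais à $f_t$. Dire que $f_t$ \og reste dans un voisinage de $\mathrm{Eq}(f^0)$\fg{} ne suffit pas: une inégalité valable sur une classe de contraintes ne s'étend pas à un voisinage de cette classe sans un argument de stabilité de la structure variationnelle par rapport au profil de réarrangement. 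C'est ce que fournit, dans le papier, l'estimation $\cJ_{f^*}(\phi) - \cJ_{f^0}(\phi) \ge - \| \phi_f \|_{L^\infty(\R^3)} \| f^* - (f^0)^* \|_{L^1(\R^6)}$, disponible justement parce que la fonctionnelle a été réduite au potentiel. Notez enfin que la forme quantitative que vous visez, $\cH(f)-\cH(f^0) \ge c \, \inf_{x_0}\| f - f^0(\cdot-x_0,\cdot)\|_{L^1(\R^6)}^2$, n'est pas celle qui sort de l'analyse: la coercitivité de \cite{LMR9} contrôle $\| \Dx\phi_f - \Dx\phi_{f^0}(\cdot - z_\phi)\|_{L^2(\R^3)}$, et le contrôle de $f - f^0$ en norme $L^1$ à poids $1+|v|^2$ est récupéré ensuite par l'argument de compacité et de saturation de l'inégalité de réarrangement (proposition~\ref{prop:suites}), et non par une inégalité fonctionnelle directe sur la classe d'équimesurabilité.
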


Les conjectures correspondantes pour le probl\`eme lin\'earis\'e ont
\'et\'e d\'emontr\'ees dans les ann\'ees 1960 et 1970 \`a la suite des
travaux fondateurs d'Antonov \cite{An1,An2}, mais leur r\'esolution
pour la dynamique non-lin\'eaire correcte vient d'\^etre obtenue par
Lemou, M\'ehats et Rapha\"el respectivement dans les articles
\cite{LMR8} et \cite{LMR9} (nous renvoyons \'egalement \`a \cite{GL}
pour une r\'eponse partielle importante sur la conjecture
\ref{conj1}). Nous nous concentrerons sur la r\'esolution de la
conjecture \ref{conj2} (article \cite{LMR9}), tandis que nous
renvoyons \`a \cite{LMR8} pour la r\'esolution de la conjecture
\ref{conj1}, qui implique les m\^emes id\'ees essentielles. Et lorsque
n\'ecessaire, nous illustrerons les m\'ethodes et r\'esultats avec les
deux mod\`eles principaux: mod\`eles polytropiques \eqref{eq:poly} et
mod\`ele de King \eqref{eq:king}. Ajoutons enfin qu'en l'absence de
sym\'etrie sph\'erique de la solution stationnaire, la question de la
stabilit\'e est ouverte, \emph{m\^eme pour le probl\`eme
  lin\'earis\'e}.

\section{\textup{Le probl\`eme lin\'earis\'e}}
\label{sec:le-probl-line}

\subsection{La dynamique lin\'earis\'ee}
\label{sec:la-dynam-line}

Il est possible d'adopter une vision g\'eom\'etrique de l'espace des
solutions du syst\`eme \eqref{eq:1}-\eqref{eq:2} en terme de structure
de Poisson, avec les invariants d\'efinissant une \emph{feuille
  symplectique}. Nous n'en aurons pas besoin dans cet expos\'e, et
nous renvoyons aux r\'ef\'erences \cite{We-i,YMC,MP1,MP2} et
\cite[p. 302]{AM} pour des approfondissements sur cet aspect. Nous
mentionnerons uniquement la notion de \emph{perturbation dynamiquement
accessible} qui a jou\'e un r\^ole dans le d\'eveloppement de la
th\'eorie perturbative.

Si l'on consid\`ere une solution stationnaire $f^0(x,v)$ on peut
lin\'eariser le syst\`eme \eqref{eq:1}-\eqref{eq:2} autour de cette
solution $f = f^0 + \varepsilon \, h$ pour obtenir l'\'equation
lin\'eaire suivante
\begin{equation}
  \label{eq:linVP}
  \left\{
  \begin{array}{l}
    \partial_t h + v \cdot \nabla_x h - \nabla_x \phi_{f^0} \cdot \nabla_v
    h = \nabla_x \phi_h \cdot \nabla_v f^0, \quad t \in \R_+, \ x \in
    \R^3, \ v \in \R^3,\vspace{0.3cm} \\ \displaystyle
    h(t=0,x,v) = h_0(x,v) ,\vspace{0.3cm} \\ \displaystyle
    \rho_h (t,x) := \int_{\R^3} h(t,x,v) \dd v \quad \mbox { et } \quad
    \phi_h(t,x) := - \frac{1}{4 \pi |x|} \ast \rho_h.
\end{array}
\right.
\end{equation}
Il est ais\'e de construire des solutions r\'eguli\`eres uniques et
r\'esoudre le probl\`eme de Cauchy pour ce syst\`eme d'\'equations aux
d\'eriv\'ees partielles lin\'eaires, par des m\'ethodes de point fixe et
en utilisant l'in\'egalit\'e d'interpolation fondamentale
de la proposition~\ref{prop:interpol-clef}.

On peut r\'e\'ecrire ce syst\`eme d'\'evolution de mani\`ere plus
compacte en 
\begin{equation}\label{eq:linVPbis}
 \left\{
  \begin{array}{l}
    \partial_t h + \left\{ h, E_{f^0} \right\} = 
    \nabla_x \phi_h \cdot \nabla_v f^0,\vspace{0.3cm} \\ \displaystyle
    h(t=0,x,v) = h_0(x,v).
\end{array}
\right. 
\end{equation}

Il est important de remarquer que la fluctuation $h$ n'est en
g\'en\'eral pas positive et que la structure hamiltonienne de
l'\'equation \eqref{eq:1} n'a pas surv\'ecu au processus de
lin\'earisation. Le syst\`eme lin\'earis\'e \eqref{eq:linVP} en
h\'erite toutefois la propri\'et\'e suivante. Si l'on d\'efinit une
perturbation initiale \emph{dynamiquement accessible} comme \'etant
cr\'e\'ee \`a partir de $f^0$ par un hamiltonien ext\'erieur donn\'e
$\bar H_{ext}$, on obtient formellement
\[
f = f^0 + \varepsilon \, h + O(\varepsilon^2) \quad \mbox{ et } \quad 
H = E_{f^0} + \varepsilon \, \bar H_{ext} \quad \mbox{ avec } \quad
h_{|t=0} =0. 
\]

Le premier ordre de lin\'earisation en $\varepsilon$ de l'\'equation
sur $h$ donne 
\[
\partial_t h + \left\{ f^0, \bar H_{ext} \right\} =
O(\varepsilon)
\]
et on obtient finalement $h_t \sim t \, \left\{ f^0, \bar H_{ext}
\right\}$ en temps petit. Ce dernier terme joue le r\^ole de la
direction de d\'erivation dans le processus de lin\'earisation. On
montre alors dans la proposition suivante que la forme $h_t = \{ g_t ,
f^0 \}$ est pr\'eserv\'ee au cours du temps pour l'\'equation
lin\'earis\'ee \eqref{eq:linVP} sur $h$, soit que l'on peut
restreindre l'\'equation lin\'earis\'ee \`a l'\og{}espace tangent\fg{}
des conservations hamiltoniennes, \textit{i.e.}, de la feuille
symplectique.

\begin{prop}
  Si $g$ est une solution (r\'eguli\`ere) de l'\emph{\'equation
  g\'en\'eratrice} suivante
\[
\left\{ 
\begin{array}{l} \displaystyle 
\partial_t g + \left\{ g, E_{f^0} \right\} = \phi_{\{ g, f^0\}},
\vspace{0.3cm} \\ \displaystyle
g_{|t=0} = g_{in}, 
\end{array}
\right.
\]
alors $h = \{ g, f^0 \}$ est une solution r\'eguli\`ere de
\eqref{eq:linVPbis}. 
\end{prop}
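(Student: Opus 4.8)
On cherche à démontrer que si $g$ résout l'équation génératrice
\[
\partial_t g + \{ g, E_{f^0}\} = \phi_{\{g,f^0\}},
\]
alors $h := \{g, f^0\}$ résout l'équation linéarisée \eqref{eq:linVPbis}. L'idée est purement calculatoire : il s'agit d'appliquer l'opérateur $\{\,\cdot\,, f^0\}$ à l'équation sur $g$ et de montrer que l'on récupère exactement l'équation sur $h$. Le point structurel à exploiter est le fait que $f^0$ est stationnaire, autrement dit que $\{f^0, E_{f^0}\} = 0$, et l'identité de Jacobi pour le crochet de Poisson sur $\R^3\times\R^3$.

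**Plan de la preuve.** Je dériverais d'abord en temps la définition $h = \{g, f^0\}$. Comme $f^0$ ne dépend pas du temps, on a simplement $\partial_t h = \{ \partial_t g, f^0\}$. En remplaçant $\partial_t g$ par son expression tirée de l'équation génératrice, on obtient
\[
\partial_t h = \{ \phi_{\{g,f^0\}} - \{g, E_{f^0}\}, \, f^0 \}
= \{ \phi_h, f^0 \} - \{ \{ g, E_{f^0}\}, f^0 \},
\]
où l'on a utilisé $\{g,f^0\} = h$ dans le terme source. Le terme $\{\phi_h, f^0\}$ se réécrit en remarquant que $\phi_h = \phi_h(t,x)$ ne dépend pas de $v$, de sorte que $\{\phi_h, f^0\} = \nabla_x \phi_h \cdot \nabla_v f^0$ (les dérivées en $v$ de $\phi_h$ étant nulles), ce qui fournit exactement le membre de droite attendu de \eqref{eq:linVPbis}.

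**Le cœur de l'argument.** Il reste à traiter le terme $-\{\{g, E_{f^0}\}, f^0\}$ et à montrer qu'il est égal à $-\{h, E_{f^0}\} = -\{\{g,f^0\}, E_{f^0}\}$. C'est précisément ici qu'intervient l'identité de Jacobi
\[
\{\{g, E_{f^0}\}, f^0\} + \{\{E_{f^0}, f^0\}, g\} + \{\{f^0, g\}, E_{f^0}\} = 0.
\]
Le terme central $\{\{E_{f^0}, f^0\}, g\}$ s'annule car $f^0$ est stationnaire, c'est-à-dire $\{E_{f^0}, f^0\} = -\{f^0, E_{f^0}\} = 0$ d'après la première équation de la définition des équilibres. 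L'identité de Jacobi se réduit alors à
\[
\{\{g, E_{f^0}\}, f^0\} = -\{\{f^0, g\}, E_{f^0}\} = \{\{g, f^0\}, E_{f^0}\} = \{h, E_{f^0}\},
\]
en utilisant l'antisymétrie du crochet. En reportant ce résultat, on obtient $\partial_t h = \nabla_x \phi_h \cdot \nabla_v f^0 - \{h, E_{f^0}\}$, soit exactement \eqref{eq:linVPbis}.

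**Obstacle principal.** Il n'y a pas d'obstacle analytique profond, l'argument étant formel et reposant entièrement sur les identités algébriques du crochet de Poisson ; c'est d'ailleurs pourquoi l'énoncé suppose $g$ \emph{régulière}, ce qui légitime les manipulations de dérivées et l'identité de Jacobi. Le seul point méritant une vérification soigneuse est le traitement du terme source non-local $\phi_{\{g,f^0\}} = \phi_h$ : il faut s'assurer que l'opérateur $f \mapsto \phi_f$ commute bien avec la structure dans le sens où $\{g,f^0\}$ apparaissant dans l'indice est identique au $h$ qui intervient dans l'équation linéarisée, et que la dépendance exclusivement spatiale de $\phi_h$ est correctement exploitée pour réduire $\{\phi_h, f^0\}$ au terme $\nabla_x\phi_h\cdot\nabla_v f^0$. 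La condition initiale $h_{|t=0} = \{g_{in}, f^0\}$ se lit directement sur la définition de $h$.
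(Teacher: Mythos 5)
Votre preuve est correcte et suit essentiellement le même chemin que celle du texte : dérivation en temps de $h=\{g,f^0\}$, substitution de l'équation génératrice, identité de Jacobi combinée à la stationnarité $\{E_{f^0},f^0\}=0$, et réduction de $\{\phi_h,f^0\}$ à $\nabla_x\phi_h\cdot\nabla_v f^0$ grâce à l'indépendance de $\phi_h$ en $v$. Les signes et l'usage de l'antisymétrie du crochet sont corrects ; rien à redire.
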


Remarquons d\`es lors, ce qui nous sera utile par la suite, qu'il est
\'equivalent de chercher une solution sous la forme $h = \{ g,f^0\}$
ou sous la forme $h = \{ g, E_{f^0}\}$: il suffit en effet de
multiplier $g$ dans la premi\`ere forme par $1/F'(E_{f^0})$ pour obtenir
la deuxi\`eme.

\begin{proof}
On calcule l'\'equation d'\'evolution sur $h_t := \{ g_t, f^0 \}$: 
\begin{multline*}
\partial_t h = \{ \partial_t g, f^0 \} = - \{ \{ g, E_{f^0} \}, f^0 \}
+ \{ \phi_h, f^0 \} \\
= \{ \{ E_{f^0}, f^0 \}, g \} + \{ \{ f^0, g\}, E_{f^0}\} +  \{ \phi_h, f^0 \}
\end{multline*}
o\`u l'on a utilis\'e l'identit\'e suivante sur le crochet de Poisson
\[
\{ \{ A, B \}, C \} + \{ \{ B, C \} , A \} + \{ \{ C, A, \}, B \}= 0.
\]
On utilise alors $\{ E_{f^0}, f^0\} =0$ (par d\'efinition d'une
solution stationnaire), et $\{ \phi_h, f^0\} = \nabla_x \phi_h \cdot
\nabla_v f^0$, ce qui permet de conclure \`a 
\[
\partial_t h + \{ h , E_{f^0} \} = \nabla_x \phi_h \cdot
\nabla_v f^0
\]
qui est bien l'\'equation recherch\'ee, avec la donn\'ee initiale
$h_{in} = \{ g_{in}, f^0\}$. 
\end{proof}

\subsection{\'Energie libre du probl\`eme lin\'earis\'e}
\label{sec:energ-libre-dant}

Dans le cas de mod\`eles sph\'eriques, l'article \cite{KO} a pour la
premi\`ere fois propos\'e une \emph{\'energie libre} pr\'eserv\'ee au
cours de l'\'evolution par le syst\`eme de Vlasov-Poisson
lin\'earis\'e dans le cas d'interactions de Coulomb, pour les
plasmas. Cette fonctionnelle sera ensuite adapt\'ee au cas
gravitationnel du syst\`eme \eqref{eq:linVP} par Antonov
\cite{An1,An2}, et ce dernier l'utilisera pour formuler un crit\`ere
de stabilit\'e.  Nous ne pr\'esentons ici que le cas isotrope
$f^0=F(E)$ pour simplifier l'exposition et nous rappelons la notation
$E=E_{f^0}(x,v)$.
\begin{prop}
La quantit\'e 
\[
\cF(h) = - \int_{\R^3 \times \R^3} \frac{|h|^2}{F'(E)} \dd x \dd v -
\int_{\R^3} |\nabla_x \phi_h|^2 \dd x
\] 
est conserv\'ee par les solutions r\'eguli\`eres du syst\`eme
\eqref{eq:linVP} dont le support est inclus dans le support de
$F'$. 
\end{prop}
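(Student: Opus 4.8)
The plan is to differentiate $\cF(h)$ in time along a regular solution and to show that the contributions of the two terms cancel exactly. Write $E=E_{f^0}$ and use that $f^0=F(E)$, so that $\nabla_v f^0 = F'(E)\,v$; the linearized equation \eqref{eq:linVPbis} then reads $\partial_t h = -\{h,E\} + F'(E)\,v\cdot\nabla_x\phi_h$. First I would treat the kinetic term $\cF_1(h)=-\int_{\R^6}|h|^2/F'(E)\,\ddxv$. Its derivative is $-2\int_{\R^6}(h/F'(E))\,\partial_t h\,\ddxv$, which splits into a transport piece and a force piece. The key observation for the transport piece is that $E$ is invariant along the stationary characteristics, so $1/F'(E)$ is a function of a conserved quantity and $\{1/F'(E),E\}=0$; hence $(h/F'(E))\{h,E\}=\tfrac12\{h^2/F'(E),E\}$ is a total Poisson bracket whose phase-space integral vanishes, since $\{\,\cdot\,,E\}$ is the divergence of a phase-space vector field. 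This leaves $\frac{d}{dt}\cF_1(h)=-2\int_{\R^6}h\,v\cdot\nabla_x\phi_h\,\ddxv$.

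Next I would differentiate the field term $\cF_2(h)=-\int_{\R^3}|\nabla_x\phi_h|^2\dd x$. Integrating by parts and using $\Delta_x\phi_h=\rho_h$ gives $\frac{d}{dt}\cF_2(h)=2\int_{\R^3}\rho_h\,\partial_t\phi_h\dd x$, and the symmetry of the Newtonian bilinear form $(\rho,\sigma)\mapsto\int_{\R^3}\rho\,(-\tfrac{1}{4\pi|x|}\ast\sigma)\dd x$ converts this into $2\int_{\R^3}\phi_h\,\partial_t\rho_h\dd x$. Substituting $\partial_t\rho_h=\int_{\R^3}\partial_t h\,\dd v$ from the equation, the transport contribution $-\int_{\R^6}\phi_h\{h,E\}\,\ddxv$, after integrating by parts in $x$ and noting that the $\nabla_v$ part is a velocity divergence, produces exactly $\int_{\R^6}h\,v\cdot\nabla_x\phi_h\,\ddxv$. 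The self-consistent contribution $\int_{\R^6}\phi_h\,F'(E)\,v\cdot\nabla_x\phi_h\,\ddxv$ vanishes by parity: for fixed $x$ the weight $F'(E)$ is even in $v$ while $v$ is odd, so $\int_{\R^3}v\,F'(E)\,\dd v=0$. Hence $\frac{d}{dt}\cF_2(h)=2\int_{\R^6}h\,v\cdot\nabla_x\phi_h\,\ddxv$.

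Adding the two contributions yields $\frac{d}{dt}\cF(h)=-2\int_{\R^6}h\,v\cdot\nabla_x\phi_h\,\ddxv+2\int_{\R^6}h\,v\cdot\nabla_x\phi_h\,\ddxv=0$, which is the assertion. The hypothesis that the support of $h$ lie in the support of $F'$ is precisely what makes the weight $1/F'(E)$ well defined and integrable, and it must be tracked throughout. The main technical obstacle is justifying the several integrations by parts, that is, the vanishing of the boundary terms both at infinity and across $\partial\{F'\neq 0\}$; this is why the statement is confined to regular solutions supported where $F'\neq 0$. The two structural facts that make the halves of $\cF$ conspire are the parity cancellation of the self-consistent force term (using that $f^0=F(E)$ with $E$ even in $v$) and the symmetry of the potential bilinear form.
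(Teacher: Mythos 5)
Your proof is correct and follows essentially the same route as the paper: differentiating $\cF(h)$ in time, killing the transport contribution to the kinetic term via $\{1/F'(E),E\}=0$ and integration by parts of the total bracket $\{h^2/F'(E),E\}$, and converting the field-term derivative through the Poisson equation so that the two cross terms $\pm 2\drint h\, v\cdot\Dx\phi_h \ddxv$ cancel exactly. The only cosmetic differences are that you invoke the symmetry of the Newtonian bilinear form and a parity argument for $\rint v\,F'(E)\dd v=0$, where the paper integrates by parts and drops velocity divergences directly; these are the same facts in different clothing.
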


 Le quotient $|h|^2/F'(E)$ doit ici \^etre compris comme \'etant
 \'egal \`a z\'ero lorsque les num\'erateurs et d\'enominateurs
 s'annulent. 

 \begin{proof}
   On souhaite diff\'erencier en temps l'expression de $\mathcal
   F(h)$. On calcule 
   \begin{multline*}
     \dt \int_{\R^3 \times \R^3} \frac{|h|^2}{F'(E)} \dd x \dd v =
     - 2 \, \drint \frac{\{ h, E \} \, h}{F'(E)} \ddxv
     + 2 \, \drint \frac{\Dx \phi_h \cdot \Dv f^0 \, h}{F'(E)}
     \ddxv \\
     = - 2 \, \drint \frac{\{ h, E \} \, h}{F'(E)} \ddxv
     + 2 \, \drint v \cdot \Dx \phi_h \, h 
     \ddxv
   \end{multline*}
puis on remarque que 
\[
\left\{ \frac{1}{F'(E)}, E \right\} = - \frac{F''(E)}{F'(E)^2}
\, \{ E, E \} = 0
\]
et ainsi par int\'egration par parties 
\[
2 \, \drint \frac{\{ h, E \} \, h}{F'(E)} \ddxv = \drint
\frac{\{ h^2, E \}}{F'(E)} \ddxv 
= - \drint h^2 \, \left\{ \frac{1}{F'(E)}, E \right\} \ddxv
=0
\]
d'o\`u 
\[
\dt \int_{\R^3 \times \R^3} \frac{|h|^2}{F'(E)} \dd x \dd v = 2
\, \drint v \cdot \Dx \phi_h \, h \ddxv.
\]

On calcule par ailleurs 
\begin{multline*}
\dt \rint |\Dx \phi_h |^2 \dd x = 2 \rint \Dx \dpt \phi_h \cdot \Dx
\phi_h \dd x = - 2 \rint \dpt \Delta_x \phi_h \, \phi_h \dd x \\
= - 2 \rint \dpt \rho_h \, \phi_h \dd x = - 2 \rint \left( \rint \dpt
  h dv \right) \, \phi_h \ddxv \\
= 2 \drint v \cdot \Dx h \phi_h \dd x \dd v = - 2 \drint v \cdot \Dx \phi_h
h \dd x \dd v.
\end{multline*}

La somme de ces deux derni\`eres \'equations fournit le r\'esultat. 
 \end{proof}

\subsection{Interpr\'etation variationnelle de l'\'energie libre} 

Quelle est l'origine de cette \'energie libre et surtout quel est son
lien avec le hamiltonien du probl\`eme non-lin\'eaire? Si l'on compare
le hamiltonien en deux fonctions diff\'erentes $f^0$ et $f$ on obtient
par un calcul \'el\'ementaire la formule
\begin{equation}\label{eq:diffHam}
\cH(f) = \cH(f^0) + \drint \left( \frac{|v|^2}{2} + \phi_{f^0} \right)
(f-f^0) \ddxv - \frac12 \rint |\Dx \phi_f - \Dx \phi_{f^0} |^2 \dd x.
\end{equation}
En effet 
\begin{multline*}
  \drint \left( \frac{|v|^2}{2} + \frac{\phi_f}{2} \right) f \ddxv =
  \drint \left( \frac{|v|^2}{2} + \frac{\phi_{f^0}}{2} \right) f^0
  \ddxv \\ + \drint \left( \frac{|v|^2}{2} + \phi_{f^0} \right)
  (f-f^0) \ddxv + \drint \left( \frac12 \phi_f f - \phi_{f^0} (f-f^0)
    - \frac12
    \phi_{f^0} f^0 \right) \ddxv \\
  = \cH(f^0) + \drint E (f-f^0) \ddxv - \rint \left( \frac12 |\Dx \phi_f|^2 -
    \Dx \phi_{f^0} \cdot \Dx \phi_f + \frac12 |\Dx \phi_{f^0}|^2 \right) \ddxv
\end{multline*}
d'o\`u le r\'esultat. 

Il est clair cependant qu'aucune fonction $f^0$ ne peut constituer un
point critique de $\cH$ \`a cause de la partie lin\'eaire en $(f-f^0)$
dans la formule \eqref{eq:diffHam} ci-dessus, qui n'est jamais
l'application nulle. Cela s'explique par les invariants du syst\`eme.
Pour y rem\'edier, une id\'ee introduite par Arnold \cite{Ar1,Ar2,Ar3}
dans les ann\'ees 1960 dans le cas de l'\'equation d'Euler
incompressible bidimensionnelle, est de consid\'erer une fonctionnelle
dite \emph{d'\'energie-Casimir} qui combine le hamiltonien et une
fonctionnelle de Casimir bien choisie. Cette id\'ee resurgira plus
loin lorsque nous aborderons la stabilit\'e non-lin\'eaire.

On se donne alors une fonctionnelle d'\'energie-Casimir g\'en\'erale
d\'efinie par une fonction $\cC : \R_+ \to \R_+$ avec $\cC(0)=0$:
\[
\cH_\cC(f) := \cH(f) + \drint \cC (f) \ddxv,
\] 
et l'on cherche \`a ajuster la fonction $\cC$ afin que cette
fonctionnelle admette bien $f^0$ comme point critique. Par un calcul
\'el\'ementaire et en d\'eveloppant $\cH_\cC$ autour de $f^0$ on
obtient
\begin{multline*}
\cH_\cC(f) = \cH_\cC(f^0) + \drint \left( \frac{|v|^2}{2} + \phi_{f^0}
  +\cC'(f^0) \right)
(f-f^0) \ddxv - \frac12 \rint |\Dx \phi_f - \Dx \phi_{f^0} |^2 \dd x\\ 
+ \frac12 \drint \cC''(f^0) (f-f^0)^2 \ddxv + \dots \\ =: \cH_\cC(f^0) +
D\cH_\cC(f^0)[f-f^0] + \frac12 D^2\cH_\cC(f^0)[f-f^0] + \dots
\end{multline*}

On voit alors que le choix formel 
\[
\cC'(f^0) = - E = - \left( \frac{|v|^2}{2} + \phi_{f^0} \right)
\]
permet d'annuler la partie lin\'eaire et d'obtenir exactement 
\[
D^2 \cH_\cC(f^0)[f-f^0] = \cF(f-f^0). 
\]
En effet puisque $f^0=F(E)$ ne d\'epend que de $E$ on a 
\[
\cC'(F(E)) = -E \quad \Longrightarrow \quad \cC''(F(E)) F'(E) =
-1 \ \mbox{ soit } \ \cC''(F(E)) = -\frac{1}{F'(E)}.
\]

On peut donc interpr\'eter cette \'energie libre comme la hessienne de
la fonctionnelle d'\'energie-Casimir au point $f^0$, pour une
contrainte $\cC$ correctement choisie en fonction de $f^0$.

\subsection{Premi\`ere approche na\"ive de la stabilit\'e lin\'eaire
  par interpolation}

 Donnons tout d'abord une approche \og na\"ive\fg{} de la stabilit\'e
 qui rappelle l'in\'egalit\'e d'interpolation de la proposition~\ref{prop:interpol-clef}
 pour l'\'equation non-lin\'eaire. On peut en effet interpr\'eter \`a
 nouveau la fonctionnelle $\cF$ comme une \'energie que l'on
 d\'ecompose en deux termes positifs:
\[
\cF(h) = \cF_{cin}(h) - \cF_{pot}(h) \quad \mbox{ avec }
\]
\[
\cF_{cin}(h) := - \int_{\R^3 \times \R^3} \frac{|h|^2}{F'(E)} \dd x
\dd v, \qquad \cF_{pot}(h) := \int_{\R^3} |\nabla_x \phi_h|^2 \dd x
\]
et l'on cherche \`a contr\^oler la partie $\cF_{pot}$ \`a partir de
$\cF_{cin}$ dans une estimation \emph{sous-critique}. La proposition
suivante est inspir\'ee de \cite{BMR}.
\begin{prop}[\cite{BMR}]
  Si la solution stationnaire $f^0$ v\'erifie 
\begin{equation}\label{eq:hypBMR}
\drint F'(E) \ddxv < + \infty
\end{equation}
alors 
\[
\cF_{pot} (h) \le C \| h \|_{L^1 (\R^6)} ^{2/3} \| \rho_h \|_{L^\infty (\R^3)}
^{1/3} \cF_{cin}(h)^{\frac12}
\]
et l'on peut d\'emontrer la stabilit\'e dans un espace de solutions
tel que $\rho_h \in L^\infty (\R^3)$. 
\end{prop}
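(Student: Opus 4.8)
Le plan est de reprendre la structure de la preuve de la proposition~\ref{prop:interpol-clef}: on ramène d'abord le contrôle de $\cF_{pot}(h)$ à celui d'une norme de Lebesgue de $\rho_h$, puis on estime cette dernière en exploitant cette fois le poids $1/F'(E)$ présent dans $\cF_{cin}$. Pour la première étape, comme $\phi_h = -\frac{1}{4\pi|x|}\ast\rho_h$, l'inégalité de Hardy-Littlewood-Sobolev déjà établie dans la preuve de la proposition~\ref{prop:interpol-clef} (et qui ne fait pas intervenir le signe de $\rho_h$) donne directement
\[
\cF_{pot}(h) = \|\Dx\phi_h\|_{L^2(\R^3)}^2 \le C\,\|\rho_h\|_{L^{6/5}(\R^3)}^2,
\]
de sorte qu'il ne reste qu'à estimer $\|\rho_h\|_{L^{6/5}}$.

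L'étape clef consiste à appliquer l'inégalité de Cauchy-Schwarz en la variable $v$ avec le poids $1/|F'(E)|$, sur le support de $F'$ (où $h$ est supportée): en posant
\[
a(x) := \int_{\R^3} |F'(E)|\dd v, \qquad k(x) := \int_{\R^3}\frac{|h(x,v)|^2}{|F'(E)|}\dd v,
\]
on obtient le contrôle ponctuel $|\rho_h(x)|^2 = \left| \int_{\R^3} h \dd v \right|^2 \le a(x)\, k(x)$. L'hypothèse \eqref{eq:hypBMR} assure exactement que $\int_{\R^3} a(x)\dd x < +\infty$, tandis que $\int_{\R^3} k(x)\dd x = \cF_{cin}(h)$. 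On combine alors les trois contrôles disponibles, la borne triviale $|\rho_h|\le\|\rho_h\|_{L^\infty}$, la borne $|\rho_h|\le a^{1/2}k^{1/2}$, et $\|\rho_h\|_{L^1}\le\|h\|_{L^1}$, via la décomposition
\[
|\rho_h|^{6/5} \le \|\rho_h\|_{L^\infty}^{1/5}\,|\rho_h|^{2/5}\,\bigl(a^{1/2}k^{1/2}\bigr)^{3/5}
\]
suivie de l'inégalité de Hölder aux exposants $(5/2,10/3,10/3)$ (ainsi $|\rho_h|^{2/5}$ s'intègre en $\|\rho_h\|_{L^1}^{2/5}$, $k^{3/10}$ en $\cF_{cin}^{3/10}$, et $a^{3/10}$ en une constante finie grâce à \eqref{eq:hypBMR}). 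On aboutit à $\|\rho_h\|_{L^{6/5}} \le C\,\|h\|_{L^1}^{1/3}\,\|\rho_h\|_{L^\infty}^{1/6}\,\cF_{cin}(h)^{1/4}$, et en élevant au carré puis en reportant dans la première étape on obtient l'inégalité annoncée.

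Pour la conséquence de stabilité, l'inégalité obtenue est \emph{sous-critique} (puissance $1/2<1$ de $\cF_{cin}$). Puisque $\cF(h) = \cF_{cin}(h)-\cF_{pot}(h)$ est conservée, l'inégalité de Young $C M\,\cF_{cin}^{1/2}\le\frac12\cF_{cin}+\frac{C^2 M^2}{2}$ appliquée avec $M=\|h\|_{L^1}^{2/3}\|\rho_h\|_{L^\infty}^{1/3}$ donne
\[
\cF_{cin}(h_t) \le 2\,\cF(h_0) + C^2 M_t^2,
\]
de sorte que la norme $\cF_{cin}$ de la perturbation reste contrôlée, et l'on en déduit la stabilité linéaire dès que l'on sait majorer a priori $M_t$.

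La principale difficulté est précisément ce contrôle a priori de $M_t$, c'est-à-dire de $\|\rho_{h_t}\|_{L^\infty}$ (et de $\|h_t\|_{L^1}$) le long du flot linéarisé \eqref{eq:linVP}. Contrairement au cas non-linéaire, l'équation linéarisée comporte le terme source $\Dx\phi_h\cdot\Dv f^0$ qui ne préserve pas les normes $L^p$ de $h$; c'est la raison pour laquelle cette approche « naïve » n'est valable que dans un espace de solutions où l'on impose $\rho_h\in L^\infty(\R^3)$, et c'est aussi ce qui la rend insuffisante pour aborder la stabilité dans des espaces de régularité plus faible.
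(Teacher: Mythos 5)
Your proposal is correct, and it is worth noting that the paper itself supplies essentially no proof here: it merely declares the functional inequality \emph{imm\'ediate} and says the linear stability statement follows \og une fois les solutions correctement construites\fg{}, so your write-up fills in details the paper omits. Your derivation is sound: the pointwise Cauchy--Schwarz in $v$ with weight $1/|F'(E)|$ giving $|\rho_h|^2 \le a\,k$, the three-exponent H\"older $(5/2,10/3,10/3)$ (the exponents do sum to $1$ and the bookkeeping $\|\rho_h\|_{L^{6/5}} \le C\|h\|_{L^1}^{1/3}\|\rho_h\|_{L^\infty}^{1/6}\cF_{cin}(h)^{1/4}$ checks out), and Hardy--Littlewood--Sobolev together give exactly the stated bound; this deliberately mirrors the proof of the interpolation inequality for the nonlinear problem, which is precisely the parallel the paper draws. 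For comparison, the \og immediate\fg{} argument presumably intended by the source avoids HLS and the $L^{6/5}$ norm altogether: writing $\cF_{pot}(h) = \int |\Dx\phi_h|^2 \dd x = -\int_{\R^3} \phi_h\,\rho_h \dd x$ and applying the same weighted Cauchy--Schwarz in \emph{both} variables yields $\cF_{pot}(h) \le \big(\int_{\R^3} a\,\phi_h^2 \dd x\big)^{1/2}\cF_{cin}(h)^{1/2} \le \|\phi_h\|_{L^\infty(\R^3)}\,\|a\|_{L^1(\R^3)}^{1/2}\,\cF_{cin}(h)^{1/2}$, and the elementary kernel-splitting bound $\|\phi_h\|_{L^\infty(\R^3)} \le C\|\rho_h\|_{L^1(\R^3)}^{2/3}\|\rho_h\|_{L^\infty(\R^3)}^{1/3}$ then gives the same inequality in two lines; your route is longer but has the merit of making the structural analogy with the nonlinear interpolation explicit. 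Your treatment of the stability consequence (conservation of $\cF$ plus Young's inequality, exploiting sub-criticality) is also correct, and your closing caveat --- that $\|h_t\|_{L^1}$ and $\|\rho_{h_t}\|_{L^\infty}$ are not propagated by the linearized flow and must therefore be built into the solution space --- is exactly the content of the statement's restriction to \og un espace de solutions tel que $\rho_h \in L^\infty(\R^3)$\fg{}.
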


La preuve de l'in\'egalit\'e fonctionnelle est imm\'ediate, et le
r\'esultat de stabilit\'e lin\'eaire en d\'ecoule une fois les
solutions correctement construites. Malheureusement l'hypoth\`ese
\eqref{eq:hypBMR} ne permet pas de traiter les mod\`eles physiques
les plus int\'eressants. Il va donc falloir se tourner vers une estimation
globale de $\cF$ sans d\'ecomposition; autrement dit, une estimation
de convexit\'e sur la fonctionnelle d'\'energie-Casimir $\cH_\cC$.

\subsection{L'in\'egalit\'e de coercitivit\'e d'Antonov}
\label{sec:lineg-de-coerc}

Antonov \cite{An1,An2} propose alors de consid\'erer la fonctionnelle
$\cF$ dans son ensemble, et \emph{de montrer qu'elle est positive, une
  fois restreinte aux perturbations dynamiquement accessibles} $h = \{
g, f^0 \}$. Pour \^etre exact, l'argument (formel) original d'Antonov
n'est pas fond\'e sur les perturbations dynamiquement accessibles,
mais sur une d\'ecomposition de la perturbation en partie paire et
partie impaire par rapport \`a la variable de vitesse. Nous renvoyons
\'egalement aux travaux ult\'erieurs dans la litt\'erature physique
qui ont d\'evelopp\'e l'id\'ee d'Antonov \cite{DFB,DF,SdFLRP,KS,PA}.

L'argument historique d'Antonov consiste \`a consid\'erer une solution
$h$ du probl\`eme lin\'earis\'e \eqref{eq:linVPbis} puis \`a la
d\'ecomposer en partie paire et partie impaire par rapport \`a la
variable de vitesse $v$, soit $h= h_1 + h_2$ avec 
\[
h_1(x,v) = \frac12 \left( h(t,x,v) + h(t,x,-v)
\right), \quad h_2(x,v) = \frac12 \left( h(t,x,v) - h(t,x,-v)
\right).
\]
On obtient ainsi formellement les deux \'equations coupl\'ees
suivantes
\[
\left\{ 
\begin{array}{l} \displaystyle 
\partial_t h_1 + \{ h_2, E \} =0, \vs \\ \ds
\partial_t h_2 + \{ h_1, E \} = \Dx \phi_{h_1} \cdot \Dv f^0.
\end{array}
\right.
\]
Puis en rempla\c cant le terme $h_1$ dans la deuxi\`eme \'equation, on
d\'eduit \emph{l'\'equation de pulsation} suivante sur $h_2$
\begin{equation}\label{eq:pulsation}
\partial^2 _{tt} h_2 = \{ \{ h_2, E \}, E \} - \Dx
\phi_{\{h_2,E \}} \cdot \Dv f^0.
\end{equation}
Par analogie avec l'\'equation diff\'erentielle $y'' = \lambda y$, on
voit que, si l'op\'erateur du membre de droite est n\'egatif, on
esp\`ere obtenir des solutions oscillantes born\'ees (ce qui justifie
le terme de \og pulsation\fg{}).

On calcule alors l'\'energie de ce syst\`eme.
\begin{prop}\label{prop:energie-pulsation}
  On suppose $F'<0$ sur son domaine, et $h \in C^\infty_c$
  une solution de \eqref{eq:pulsation} \`a support inclus dans celui
  de $F'$. Alors $h$ v\'erifie
\[
\td{}{t} \left( - \drint \frac{|h |^2}{F'(E)} \ddxv - \drint \frac{|\{
h, E \}|^2}{F'(E)} \ddxv - \rint |\Dx \phi_{\{h,f^0\}} |^2 \dd x
\right) =0
\]
soit 
\[
\td{}{t} \left( - \drint \frac{|h |^2}{F'(E)} \ddxv +
  \cF(\{h,f^0\}) \right) =0.
\]
\end{prop}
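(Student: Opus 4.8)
The plan is to read the displayed functional as the conserved energy of the second-order, wave-type \emph{pulsation equation} \eqref{eq:pulsation}, and to prove its conservation by the same two-step mechanism already used above for $\cF$: first commute $\dt$ through every time-independent object, then integrate by parts in phase space, exploiting that $1/F'(E)$ Poisson-commutes with $E$. Since $E=E_{f^0}$, the weight $F'(E)$ and the profile $f^0$ are stationary, so $\{\,\cdot\,,E\}$ and the Poisson solve $h\mapsto\phi_{\{h,E\}}$ commute with $\dpt$; in particular $\dpt\{h,E\}=\{\dpt h,E\}$. As $F'<0$ on the support of $F'$ we have $-1/F'(E)>0$ there, so $\langle u,w\rangle:=-\drint (uw)/F'(E)\ddxv$ is a genuine positive inner product; with respect to it the three terms play the roles of the kinetic, potential, and field energies of the Hamiltonian system $\partial^2_{tt}h=A h$, and the claim is exactly the energy law one expects when $A$ is self-adjoint for $\langle\cdot,\cdot\rangle$.

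First I would differentiate in time and treat the potential term $-\drint |\{h,E\}|^2/F'(E)\ddxv$. Moving one bracket by the phase-space integration by parts $\drint\{a,b\}c\ddxv=\drint a\{b,c\}\ddxv$ (the Poisson bracket is a phase-space divergence, hence integrates to zero) together with $\{1/F'(E),E\}=0$ — valid precisely because $1/F'(E)$ is a function of the invariant $E$ — rewrites its time derivative as $2\drint \dpt h\,\{\{h,E\},E\}/F'(E)\ddxv$. Now I would insert the pulsation equation \eqref{eq:pulsation}, replacing $\{\{h,E\},E\}$ by $\partial^2_{tt}h+\Dx\phi_{\{h,E\}}\cdot\Dv f^0$. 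The $\partial^2_{tt}h$-part equals $-\dt$ of the kinetic term $-\drint |\dpt h|^2/F'(E)\ddxv$ and cancels it.

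It then remains to cancel the field contribution against $\dt$ of the field energy. Here I would use $\Dv f^0=F'(E)\,v$, so that the weight $1/F'(E)$ disappears and the leftover is $2\drint (v\cdot\Dx\phi_{\{h,E\}})\,\dpt h\ddxv$; integrating by parts in $x$, using the Poisson equation $\Delta_x\phi=\rho$ and $\rho_{\{\dpt h,E\}}=\Dx\cdot\!\int \dpt h\,v\dd v$, turns this into precisely $-\dt$ of $-\rint|\Dx\phi_{\{h,E\}}|^2\dd x$ — the same identity $\dt\rint|\Dx\phi_h|^2\dd x=-2\drint v\cdot\Dx\phi_h\,h\ddxv$ that appears in the free-energy computation above. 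Summing the three pieces yields $\dt(\cdots)=0$. (The identity $\{h,f^0\}=F'(E)\{h,E\}$ is the link between the two conventions $\{\,\cdot\,,f^0\}$ and $\{\,\cdot\,,E\}$ appearing in the statement and in \eqref{eq:pulsation} respectively, and records the interchangeability of the two ways of writing dynamically accessible perturbations noted earlier.)

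The crux, and the step I expect to be the main obstacle, is the phase-space integration by parts powered by $\{1/F'(E),E\}=0$: it is what promotes a first-order-in-time quantity to the double bracket $\{\{h,E\},E\}$, so that the \emph{second-order} pulsation equation can enter and cancel the kinetic term. The rest is sign bookkeeping to match the field term coming from $\Dx\phi_{\{h,E\}}\cdot\Dv f^0$ against the derivative of the field energy; the smoothness and compact-support hypotheses on $h$, with support inside that of $F'$, ensure that the quotient by $F'(E)$ is well defined and that all boundary terms in the integrations by parts vanish.
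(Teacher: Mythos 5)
Your proposal is correct and follows essentially the same route as the paper: the paper integrates the pulsation equation against $\partial_t h/F'(E)$ and recognizes each of the three resulting terms as an exact time derivative, using precisely your ingredients — the Poisson-bracket integration by parts powered by $\{1/F'(E),E\}=0$, the identity $\Dv f^0 = F'(E)\, v$ to absorb the weight in the field term, and the Poisson equation to convert that term into the derivative of the field energy. Running the computation from the functional (as you do) rather than from the equation (as the paper does) is the same argument in the opposite direction, and you also correctly read the kinetic term as $|\partial_t h|^2/F'(E)$ and flag the $\{\cdot,f^0\}$ versus $\{\cdot,E\}$ convention, which is exactly how the paper's own proof should be understood.
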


\begin{proof}
  En int\'egrant contre $\partial_t h /F'(E)$ l'\'equation de
  pulsation on obtient
\[
\drint \partial^2 _{tt} h \frac{\partial_t  h}{F'(E)} \ddxv 
= \dt \frac12 \drint \frac{|\partial_t h|^2}{F'(E)} \ddxv 
\]
pour le premier terme, puis
\begin{multline*}
  \drint \{ \{ h, E \}, E \} \frac{\partial_t h}{F'(E)} \ddxv \\ =
  - \drint \{ h, E \} \frac{\{ \partial_t h, E\} }{F'(E)} \ddxv =
  - \frac12 \td{}{t} \drint \frac{|\{ h, E \}|^2}{F'(E)} \ddxv
\end{multline*}
pour le deuxi\`eme terme o\`u l'on a utilis\'e une int\'egration par
parties et le fait que $\{ 1/F'(E), E \} =0$, et enfin pour le dernier
terme
\begin{multline*}
  - \drint \Dx \phi_{\{h,E \}} \cdot \Dv f^0 \frac{\partial_t
    h}{F'(E)} \ddxv =
  - \drint \{ \phi_{\{h,E\}}, E \} \partial_t h \ddxv \\
  = \drint \phi_{\{h,E\}} \{ \partial_t h , E \} \ddxv
  = \rint \phi_{\{h,E \}} \partial_t \rho_{\{h , E \}}
  \dd x \\ = - \rint \Dx \phi_{\{h,E \}} \cdot \Dx \partial_t
  \phi_{\{ h , E  \}} \dd x = - \frac12 \td{}{t} \rint |\Dx
  \phi_{\{h,E \}}|^2\dd x.
\end{multline*}
On obtient le r\'esultat souhait\'e en combinant les trois
pr\'ec\'edentes \'egalit\'es. 
\end{proof}

On voit donc que, d\`es lors que l'on sait montrer la positivit\'e de
l'\'energie libre sur les perturbations admissibles $\{ h_2, f^0\}$
issues d'une fonction $h_2$ impaire, on peut en d\'eduire
l'impossibilit\'e d'instabilit\'es sur la composante impaire $h_2$
puisque l'on a alors la somme de deux termes positifs qui est
conserv\'ee au cours du temps. C'est le crit\`ere de stabilit\'e
d'Antonov \cite{An1}. Le reste de l'argument de \cite{An1} est moins
clair en ce qui concerne le contr\^ole de la composante paire $h_1$,
et semble conditionn\'e \`a certaines hypoth\`eses \textit{ad hoc} pour
\'eviter la possibilit\'e d'instabilit\'es avec une croissance
polynomiale en temps. Dans l'article suivant \cite{An2}, Antonov
donne une preuve de cette propri\'et\'e cruciale de positivit\'e de
$\cF(\{ h_2,E \})$ avec $h_2$ impaire, en r\'esolvant le probl\`eme de
minimisation associ\'e.

Nous allons maintenant pr\'esenter les analyses math\'ematiques
r\'ecentes de ces questions. Tout d'abord, nous pr\'esentons un
\'enonc\'e et une preuve \'el\'egante issue de \cite{GR3} de la
coercitivit\'e d\'ecouverte par Antonov, \textit{i.e.}, la
positivit\'e de l'\'energie libre sur les fonctions de la forme $\{
h_2, f^0\}$ avec $h_2$ impaire. Du fait de l'importance de cette
propri\'et\'e d'un point de vue historique, ainsi que pour la gen\`ese
des r\'esultats dont il est question ici, nous pr\'esentons une preuve
d\'etaill\'ee. Une autre approche de cette propri\'et\'e et de sa
preuve, d\'evelopp\'ee dans \cite{LMR8,LMR9}, sera discut\'ee dans la
section \ref{sec:la-stabilite-non}.

\begin{prop}[\cite{GR3}]
  \label{prop:Antonov-simple}
  On suppose $F'<0$ sur le domaine de $f^0$, et $h \in C^\infty_c$
  \`a support inclus dans celui de $F'$, \`a sym\'etrie
  sph\'erique et impaire par rapport \`a la variable de vitesse. Alors
  on a
\begin{equation}\label{eq:Antonov-simple}
\cF(\{h,f^0\}) \ge - \drint F'(E) \left( |x \cdot v|^2
  \left| \left\{ \frac{h}{x\cdot v}, E \right\} \right|^2 +
  \frac{\phi_{f^0}'(r)}{r} |h|^2 \right) \ddxv \ge 0
\end{equation}
o\`u $r=|x|$ et $\phi_{f^0}'(r)$ d\'esigne la d\'eriv\'ee radiale de
$\phi_{f^0}=\phi_{f^0}(r)$. 
\end{prop}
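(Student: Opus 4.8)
Le plan est de calculer explicitement $\cF(\{h,f^0\})$ en exploitant à fond la structure $f^0=F(E)$ et la parité de $h$. Notons $\phi=\phi_{f^0}$, $r=|x|$ et $P=x\cdot v$. Comme $\{h,f^0\}=F'(E)\{h,E\}$, le terme cinétique de l'énergie libre se réécrit $-\drint |\{h,f^0\}|^2/F'(E)\,\ddxv = -\drint F'(E)\,|\{h,E\}|^2\,\ddxv$, positif car $F'<0$. Puisque $h$ est à symétrie sphérique et impaire en $v$, c'est une fonction impaire de $P$ (à $|x|,|v|$ fixés), de sorte que $g:=h/P$ est bien définie et régulière. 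J'écrirais alors $h=Pg$ et, au moyen de l'identité fondamentale $\{P,E\}=|v|^2-r\phi'(r)=:Q$, je développerais $\{h,E\}=P\{g,E\}+gQ$.

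L'étape suivante consiste à élever cette expression au carré et à l'intégrer contre $-F'(E)$. Le terme $-\drint F'(E)\,P^2|\{g,E\}|^2$ reproduit exactement le terme $-\drint F'(E)\,|x\cdot v|^2|\{h/(x\cdot v),E\}|^2$ de \eqref{eq:Antonov-simple}. Le terme croisé $-\drint F'(E)\,PQ\,\{g^2,E\}$ s'intègre par parties grâce à l'antisymétrie de $\{\cdot,E\}$ et à $\{F'(E),E\}=0$, ce qui donne $\drint g^2\,F'(E)\,\{PQ,E\}$. Un calcul de crochets (à partir de $\{P,E\}=Q$ puis $\{Q,E\}=-\tfrac{P}{r}(3\phi'+r\phi'')$) fournit $\{PQ,E\}=Q^2-\tfrac{P^2}{r}(3\phi'+r\phi'')$, et la contribution en $Q^2$ se simplifie exactement avec le dernier terme $-\drint F'(E)Q^2g^2$. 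En injectant l'équation de Poisson $\phi''+\tfrac{2}{r}\phi'=\rho_{f^0}$, on obtient $\tfrac{3\phi'}{r}+\phi''=\tfrac{\phi'}{r}+\rho_{f^0}$, et comme $P^2g^2=|h|^2$ il vient l'identité
\[
\cF(\{h,f^0\}) = R - \drint F'(E)\,\rho_{f^0}\,|h|^2\,\ddxv - \rint |\Dx\phi_{\{h,f^0\}}|^2\,\dd x,
\]
où $R$ désigne le terme central de \eqref{eq:Antonov-simple}. Comme $-F'\ge0$ et $\phi'\ge0$ (conséquence de $\rho_{f^0}\ge0$ via Poisson), on a $R\ge0$, ce qui est la seconde inégalité.

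Il reste à établir l'inégalité clef, qui est le cœur de la preuve,
\[
\rint |\Dx\phi_{\{h,f^0\}}|^2\,\dd x \le -\drint F'(E)\,\rho_{f^0}\,|h|^2\,\ddxv, \qquad (\star)
\]
car elle assure que les deux derniers termes ci-dessus se combinent en une quantité positive, d'où $\cF(\{h,f^0\})\ge R$. Pour cela, j'écrirais $\{h,f^0\}$ sous forme divergente, ce qui donne $\rho_{\{h,f^0\}}=\Dx\cdot\!\big(\rint h\,F'(E)\,v\,\dd v\big)$ ; une intégration par parties fournit alors $\rint|\Dx\phi_{\{h,f^0\}}|^2=\drint(\Dx\phi_{\{h,f^0\}}\cdot v)\,h\,F'(E)\,\ddxv$. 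J'appliquerais ensuite Cauchy--Schwarz pour la mesure positive $-F'(E)\,\ddxv$, en répartissant le poids $\rho_{f^0}$ :
\[
\rint|\Dx\phi_{\{h,f^0\}}|^2 \le \Big(\drint \tfrac{|\Dx\phi_{\{h,f^0\}}\cdot v|^2}{\rho_{f^0}}\,(-F'(E))\Big)^{1/2}\Big(\drint \rho_{f^0}\,|h|^2\,(-F'(E))\Big)^{1/2}.
\]
L'isotropie en $v$ de $F'(E)$ (puisque $E=\tfrac{|v|^2}{2}+\phi(r)$) donne $\rint v_iv_j(-F'(E))\,\dd v=\tfrac{\delta_{ij}}{3}\rint|v|^2(-F'(E))\,\dd v$, et une intégration par parties en $v$ montre que $\tfrac13\rint|v|^2(-F'(E))\,\dd v=\rho_{f^0}$ ; le premier facteur vaut donc $\big(\rint|\Dx\phi_{\{h,f^0\}}|^2\big)^{1/2}$, et $(\star)$ suit après simplification.

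La difficulté principale réside, d'une part, dans la comptabilité exacte de l'intégration par parties sur les crochets de Poisson (l'annulation providentielle des termes en $Q^2$ et l'usage de l'équation de Poisson) et, d'autre part, dans le choix précis de la répartition des poids dans le Cauchy--Schwarz final, où l'identité d'isotropie $\tfrac13\rint|v|^2(-F')\,\dd v=\rho_{f^0}$ doit faire apparaître \emph{exactement} les deux quantités voulues pour que l'inégalité se referme.
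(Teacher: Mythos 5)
Votre d\'emonstration est correcte, et son squelette est celui du texte : d\'ecomposition $h=(x\cdot v)\,g$, identit\'e exacte pour le terme cin\'etique (int\'egration par parties du terme crois\'e, \'equation de Poisson), puis combinaison avec l'in\'egalit\'e $(\star)$ qui absorbe le terme de champ. Votre calcul du terme cin\'etique via $\{PQ,E\}$ est, \`a la notation pr\`es, celui du texte via $\big\{\{(x\cdot v),E\},E\big\}$ : on y retrouve la m\^eme compensation des termes en $Q^2$ et la m\^eme apparition du facteur $\rho_{f^0}+\phi_{f^0}'/r$. La vraie diff\'erence est dans la preuve de $(\star)$. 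Le texte exploite la sym\'etrie sph\'erique de $h$ : $\phi_{\{h,f^0\}}$ est alors radiale, la loi de Gauss fournit la formule ponctuelle $\phi_{\{h,f^0\}}'(r)=\rint \frac{x\cdot v}{|x|}\,F'(E)\,h\dd v$, et Cauchy--Schwarz est appliqu\'ee \emph{en chaque point $x$}, avec l'identit\'e $-\rint \left(\frac{x\cdot v}{|x|}\right)^2 F'(E)\dd v=\rho_{f^0}$. Vous proc\'edez globalement : dualit\'e $\rint|\Dx\phi_{\{h,f^0\}}|^2\dd x=\drint(\Dx\phi_{\{h,f^0\}}\cdot v)\,h\,F'(E)\ddxv$, Cauchy--Schwarz pond\'er\'e par $\rho_{f^0}$, puis l'identit\'e d'isotropie $\frac13\rint|v|^2\,(-F'(E))\dd v=\rho_{f^0}$ (la m\^eme identit\'e que celle du texte, sous sa forme tridimensionnelle), ce qui referme une in\'egalit\'e auto-r\'ef\'erente $I\le I^{1/2}(\cdots)^{1/2}$. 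Votre variante n'utilise pas la sym\'etrie sph\'erique de $h$ dans cette \'etape (chez vous elle ne sert qu'\`a d\'efinir $g=h/(x\cdot v)$, via l'imparit\'e en $x\cdot v$), ce qui la rend un peu plus g\'en\'erale ; celle du texte est plus \'el\'ementaire (pas de poids \`a choisir, pas d'auto-r\'ef\'erence). Deux points seraient \`a expliciter pour que votre argument soit complet : la division par $\rho_{f^0}$ est licite car $F>0$ l\`a o\`u $F'<0$, donc $\rho_{f^0}>0$ sur le support de l'int\'egrande ; et la simplification par $I^{1/2}$ suppose $I\not=0$, le cas $I=0$ \'etant trivial.
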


\begin{proof}
  Observons tout d'abord que, pour un mod\`ele sph\'erique $f^0=F(E)$,
  les fonctions $\phi_{f^0}$ et $\rho_{f^0}$ sont radialement
  sym\'etriques, et on a 
\[
\Delta_x \phi_{f^0} = \rho_{f^0} \quad \Longrightarrow \quad
\frac{1}{r^2} \left[ r^2 \phi'_{f^0}(r) \right]' = \rho_{f^0}(r)
\]
d'o\`u, en int\'egrant de $0$ \`a $r$, on obtient 
\[
\fa r >0, \quad \phi'_{f^0}(r) = \frac{1}{r^2} \int_0 ^r r_* ^2
\rho_{f^0} (r_*) \dd r_* \ge 0
\]
et le potentiel cr\'e\'e par la solution stationnaire $f^0$ est
croissant. On d\'eduit donc imm\'ediatement la positivit\'e du membre
de droite dans la proposition. 

Rappelons la formule de l'\'energie libre 
\[
\cF(\{ h, f^0\}) := - \int_{\R^3 \times \R^3} \frac{|\{ h,
  f^0\}|^2}{F'(E)} \dd x \dd v - \int_{\R^3} |\nabla_x \phi_{\{h, f^0\}}|^2 \dd x.
\]
Nous allons tout d'abord contr\^oler par au-dessus l'oppos\'e du
second terme. On observe d'une part que la sym\'etrie sph\'erique de
$h$ implique imm\'ediatement celle de $\phi_{\{h,f^0\}}$, et d'autre
part que
\begin{multline*}
  \rint \{h, f^0\} \dd v = \rint \left( \Dx h \cdot \Dv f^0 - \Dv h
    \cdot \Dx
    f^0 \right) \dd v \\
  = \Dx \cdot \left( \rint h \Dv f^0 \dd v \right) = \Dx \cdot \left(
    \rint v F'(E) h \dd v \right).
\end{multline*}
Par cons\'equence, en utilisant \`a nouveau $\Delta_x \phi= r^{-2} [
r^2 \phi' ]'$, on obtient 
\begin{multline*}
\phi_{\{h, f^0\}}'(r) = \frac{1}{r^2} \int_0 ^r \left( \rint \{h(x_*,v),
  f^0(x_*,v)\} \dd v \right)  r_* ^2 \dd r_* 
\\
= \frac{1}{4 \pi r^2} \int_{|x|\le r} \Dx \cdot \left(
  \rint v F'(E) h \dd v \right) \dd x = \frac{1}{4 \pi r^2}
\int_{|x| = r} \rint \left( \frac{x \cdot v}{|x|} \right) F'(E) h \dd v \dd \omega(x)
\end{multline*}
o\`u $\mathrm d \omega$ d\'esigne la mesure uniforme sur la sph\`ere, et dans
la premi\`ere ligne $|x_*|=r_*$. Puis, l'int\'egrande est \`a nouveau
invariant par rotation sur la variable $x$ et on en d\'eduit
\[
\fa |x|=r, \quad \phi_{\{h, f^0\}}'(r) = \rint \left( \frac{x
    \cdot v}{|x|} \right) F'(E)  h \dd v.
\]
On aboutit au contr\^ole suivant sur le second terme de
l'\'energie libre par in\'egalit\'e de Cauchy-Schwarz:
\[
\int_{\R^3} |\nabla_x \phi_{\{h, f^0\}}|^2 \dd x \le \rint \left( -
  \rint \left( \frac{x \cdot v}{|x|} \right)^2 F'(E) \dd v \right)
\left( - \rint F'(E) |h|^2 \dd v\right) \dd x.
\]
\'Etudions plus pr\'ecis\'ement la premi\`ere int\'egrale en $v$ du
membre de droite. En chaque point $x$, on d\'ecompose orthogonalement
la variable $v$ en
\[
v = \left( \frac{x \cdot v}{|x|} \right) \frac{x}{|x|} + \left( \frac{x \wedge
    v}{|x|} \right) =: v_{||} \frac{x}{|x|} + v_{\bot}, \qquad
v_{||}\in \R, \ v_{\bot} \in \R^2
\]
et 
\begin{multline*}
  - \rint \left( \frac{x \cdot v}{|x|} \right)^2 F'(E) \dd v = -
  \int_{\R^2} \int_{\R} v_{||} ^2 F'\left( \frac{|v_{||}|^2}{2}
    + \frac{|v_{\bot}|^2}{2} + \phi_{f^0}\right) \dd v_{||} \dd v_{\bot}
  \\
  = -2 \int_{\R^2} \int_{\R_+} v_{||} ^2 F'\left(
    \frac{v_{||}^2}{2} + \frac{|v_{\bot}|^2}{2} + \phi_{f^0}(r)\right)
  \dd v_{||} \dd v_{\bot} \\
= 2 \int_{\R^2} \int_{\R_+} F\left(
    \frac{v_{||}^2}{2} + \frac{|v_{\bot}|^2}{2} + \phi_{f^0}(r)\right)
  \dd v_{||} \dd v_{\bot} = \drint f^0(x,v) \ddxv = \rho_{f^0}(r).
\end{multline*}
On a donc 
\[
\int_{\R^3} |\nabla_x \phi_{\{h, f^0\}}|^2 \dd x \le - \drint
 F'(E) \rho_{f^0}(r) |h|^2 \ddxv.
\]

On calcule maintenant le carr\'e du crochet de Poisson suivant
\begin{multline*}
  |\{ h, f^0 \}|^2 = \left| \left\{ \frac{h}{(x\cdot v)} (x \cdot v),
      f^0 \right\} \right|^2 = \left| (x \cdot v) \left\{
      \frac{h}{(x\cdot v)}, f^0 \right\} + \frac{h}{(x\cdot v)}
    \left\{ (x \cdot v),
      f^0 \right\} \right|^2 \\
  = (x \cdot v)^2\left| \left\{ \frac{h}{(x\cdot v)}, f^0 \right\}
  \right|^2 + \left( \frac{h}{(x\cdot v)} \right)^2 \left| \left\{ (x
      \cdot v), f^0 \right\} \right|^2 + 2 h \left\{ \frac{h}{(x\cdot
      v)}, f^0
  \right\} \left\{ (x \cdot v), f^0 \right\} \\
= (x \cdot v)^2\left| \left\{ \frac{h}{(x\cdot v)}, f^0 \right\}
  \right|^2 + \left\{ \left( \frac{h}{ (x\cdot v)}\right)^2 (x \cdot
    v) \{ (x \cdot v), f^0 \}, f^0 \right\} \\ 
 - \left( \frac{h^2}{(x \cdot v)} \right) \big\{ \{ (x \cdot v), f^0\},
 f^0\big\}
\end{multline*}
ce qui donne, lorsque l'on int\`egre contre $1/F'(E)$: 
\begin{multline*}
  \drint \frac{|\{ h, f^0 \}|^2}{F'(E)} \ddxv = \drint \frac{(x \cdot
    v)^2}{F'(E)} \left| \left\{ \frac{h}{(x\cdot v)}, f^0 \right\}
  \right|^2 \ddxv \\ + \drint \frac{1}{F'(E)} \left\{ \left( \frac{h}{
        (x\cdot v)}\right)^2 (x \cdot v) \{ (x \cdot v), f^0 \}, f^0
  \right\} \ddxv \\ - \drint \frac{1}{F'(E)} \left( \frac{h^2}{(x
      \cdot v)} \right) \big\{ \{ (x \cdot v), f^0\}, f^0\big\} \ddxv
  \\
  = \drint (x \cdot v)^2 F'(E) \left| \left\{ \frac{h}{(x\cdot v)}, E
    \right\} \right|^2 \ddxv \\ + \drint \left\{ \left( \frac{h}{
        (x\cdot v)}\right)^2 (x \cdot v) \{ (x \cdot v), f^0 \}, E
  \right\} \ddxv \\ - \drint F'(E) \left( \frac{h^2}{(x \cdot v)}
  \right) \big\{ \{ (x \cdot v), E\}, E\big\} \ddxv.
\end{multline*}
Le deuxi\`eme terme du membre de droite s'annule par int\'egration par
parties, et comme 
\begin{multline*}
\big\{ \{ (x \cdot v), E\}, E\big\} = - (x \cdot v ) \phi_{f^0}''(r)
- 3 \frac{(x \cdot v)}{r} \phi_{f^0}'(r) \\ = - \frac{(x \cdot v)}{r^2}
\left( \phi_{f^0} ''(r) r^2 + 2 \phi_{f^0}'(r) r \right) - \frac{(x
  \cdot v)}{r} \phi_{f^0}'(r) \\
= - \frac{(x \cdot v)}{r^2}
\left( r^2 \phi_{f^0} '(r) \right)' - \frac{(x
  \cdot v)}{r} \phi_{f^0}'(r) = - (x \cdot v) \rho_{f^0}(r) - \frac{(x
  \cdot v)}{r} \phi_{f^0}'(r),
\end{multline*}
on obtient pour le troisi\`eme terme 
\[
- \drint F'(E) \left( \frac{h^2}{(x \cdot v)} \right) \big\{ \{ (x \cdot
v), E\}, E\big\} \ddxv = \drint F'(E) h^2 \left(
  \rho_{f^0} + \frac{\phi_{f^0}'}{r} \right)  \ddxv
\]
et donc 
\begin{multline*}
- \drint \frac{|\{ h, f^0 \}|^2}{F'(E)} \ddxv 
  = - \drint (x \cdot v)^2 F'(E) \left| \left\{ \frac{h}{(x\cdot
        v)}, f^0 \right\} \right|^2 \ddxv \\ - \drint F'(E) h^2 \left(
  \rho_{f^0} + \frac{\phi_{f^0}'}{r} \right)  \ddxv.
\end{multline*}
On conclut la preuve en combinant cette \'egalit\'e avec
l'in\'egalit\'e pr\'ec\'edente sur $\int_{\R^3} |\nabla_x \phi_{\{h,
  f^0\}}|^2 \dd x$.
\end{proof}

\subsection{Une in\'egalit\'e de coercitivit\'e pr\'ecis\'ee \`a la
  Weinstein pour les polytropes}
\label{sec:une-inegalite-de}

Pour conclure cette section, nous allons maintenant consid\'erer le
mod\`ele polytropique \eqref{eq:poly} et pr\'esenter l'analyse
lin\'earis\'ee rigoureuse effectu\'ee dans \cite{LMR3}. Nous renvoyons
\`a la section suivante pour la caract\'erisation variationnelle des
polytropes de \cite{LMR4} qui est utilis\'ee pour montrer la
positivit\'e au sens large de la fonctionnelle d'\'energie libre du
probl\`eme lin\'earis\'e, et nous montrons comment les auteurs
de \cite{LMR3} en d\'eduisent une in\'egalit\'e de coercitivit\'e
pr\'ecis\'ee.

Nous pr\'esentons ce r\'esultat et une \'ebauche de preuve car cette
derni\`ere contient l'une des id\'ees de la m\'ethode non-lin\'eaire
de \cite{LMR9}: l'in\'egalit\'e de coercitivit\'e sur l'op\'erateur de
Schr\"odinger \eqref{eq:opSchr} ci-dessous, qui sera utilis\'ee pour
pouvoir traiter les perturbations non radiales dans le cas
non-lin\'eaire. Ce travail est inspir\'e de l'\'etude par Weinstein
\cite{We-ii-1,We-ii-2} dans les ann\'ees 1980 de la stabilit\'e des
solitons pour l'\'equation de Schr\"odinger.

On consid\`ere une solution stationnaire polytropique \eqref{eq:poly}
avec $n = 1/(p-1)$ et $E_0=-1$: 
\begin{equation}\label{eq:polyp}
f^0 = F(E) = \left( -1 - E \right)_+ ^{1/(p-1)} =
\left( -1 - \frac{|v|^2}{2} - \phi_{f^0} \right)_+ ^{1/(p-1)}
\end{equation}
pour $p > p_c = 9/7$ sup\'erieur strictement \`a l'exposant critique
que nous avons d\'ej\`a rencontr\'e dans l'in\'egalit\'e
d'interpolation de la proposition~\ref{prop:interpol-clef}, ce qui
redonne exactement la condition $n <7/2$. On d\'efinit l'op\'erateur
\[
\cM h = \left( - \frac{h}{F'(E)} + \phi_h \right) 1_K
\]
restreint au support $K$ de $f^0$. 
Il est clair que 
\[
\langle \cM h , h \rangle_{L^2(\R^3 \times \R^3)} = \cF(h)
\]
et que cet op\'erateur lin\'eaire est sym\'etrique dans $L^2(K,\dd \mu)$
avec la mesure de r\'ef\'erence $\mu = 1/|F'(E)|$. On peut donc
reformuler la question de contr\^oler par en-dessous l'\'energie libre
en un \emph{contr\^ole de coercitivit\'e} sur l'op\'erateur $\cM$.

Tout d'abord, nous admettons ici la positivit\'e (au sens large) $\cM
\ge 0$ de cet op\'erateur pour des perturbations \emph{qui ne
  modifient pas le hamiltonien}:
\[
h \in L^2(K,d\mu) \quad \mbox{ avec } \quad \drint h E \ddxv = \drint
h \left( \frac{|v|^2}{2} + \phi_{f^0} \right) \ddxv = 0.
\]
Cette positivit\'e d\'ecoule de la caract\'erisation variationnelle de
la proposition~\ref{prop:var-lmr-lions} par saturation
  d'in\'egalit\'e de Sobolev de la section suivante. 

  Une fois cette positivit\'e acquise, en s'inspirant de
  \cite{We-ii-1}, Lemou, M\'ehats et Rapha\"el quantifient la
  coercitivit\'e de l'\'energie libre de la mani\`ere suivante.

\begin{prop}\label{prop:weinsteinlin}
  Pour $p_c <p <+\infty$, la forme quadratique $h \mapsto \langle \cM
  h, h\rangle$ est continue et auto-adjointe sur $L^2(K,\dd \mu)$, et il
  existe une constante $\delta$ ne d\'ependant que de $p$ telle que
  \begin{equation}
    \label{eq:lin-coerc}
    \langle \cM h, h \rangle \ge \delta \int_K \frac{h^2}{|F'(E)|} -
    \frac{1}{\delta} \left[ \left( \drint h E \ddxv \right) + \sum_{i=1} ^3 \left(
        \drint x_i h \ddxv \right)^2 \right].
  \end{equation}
\end{prop}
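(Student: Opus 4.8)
The plan is to read the estimate as a \emph{quantified coercivity} statement for the self-adjoint operator attached to $\cF$ on the Hilbert space $\cH := L^2(K,\dd\mu)$ with $\dd\mu = \dd x\,\dd v/|F'(E)|$, and to run the standard Weinstein bookkeeping once the spectral structure and the non-degeneracy of the two families of functionals are in place. First I would record that, on $\cH$, the quadratic form $h\mapsto\langle\cM h,h\rangle=\cF(h)$ is represented by $\mathrm{Id}+\cL$, where $\cL h := |F'(E)|\,\phi_h\,1_K$. Indeed $\cF(h)=\int_K h^2/|F'(E)| - \rint|\Dx\phi_h|^2 = \|h\|_{L^2(\dd\mu)}^2 + \drint\phi_h\,h\,\ddxv$, and $\drint\phi_h h\,\ddxv=\langle\cL h,h\rangle_{L^2(\dd\mu)}$. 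Symmetry of $\cL$ in $L^2(\dd\mu)$ is the symmetry of the Poisson Green kernel; boundedness gives the continuity and self-adjointness asserted in the statement. The key structural input is that $\cL$ is \emph{compact} on $\cH$: since $f^0$ has compact support $K$ for $p>p_c$ (i.e. $n<7/2$), the map $h\mapsto\rho_h$ is bounded $\cH\to L^2_x$ by Cauchy–Schwarz in $v$ against $|F'(E)|$, the Poisson solve $\rho_h\mapsto\phi_h$ gains two derivatives, and Rellich on the bounded $x$-support yields compactness. Hence the essential spectrum of $\mathrm{Id}+\cL$ is reduced to $\{1\}$, and below $1$ there are only finitely many eigenvalues of finite multiplicity: in particular a \emph{positive spectral gap} above $0$.

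Next I would identify the finitely many ``bad'' directions. The translation invariance produces $\partial_{x_i}f^0 = F'(E)\,\partial_{x_i}\phi_{f^0}$, and these lie in $\ker(\mathrm{Id}+\cL)$: one has $-\partial_{x_i}f^0/F'(E)=-\partial_{x_i}\phi_{f^0}$, while $\rho_{\partial_{x_i}f^0}=\partial_{x_i}\rho_{f^0}$ forces $\phi_{\partial_{x_i}f^0}=\partial_{x_i}\phi_{f^0}$, so the two terms cancel. These three directions are captured \emph{non-degenerately} by the moment functionals, since integration by parts gives $\drint x_i\,\partial_{x_j}f^0\,\ddxv = -\delta_{ij}\,\|f^0\|_{L^1(\R^6)}$, an invertible matrix. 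On the other hand, the admitted positivity $\cF\ge 0$ on $\{\drint hE\,\ddxv=0\}$ forces $\mathrm{Id}+\cL$ to have \emph{at most one} negative eigenvalue; and if $\chi$ is a corresponding eigenvector, then $\drint\chi E\,\ddxv\neq 0$, for otherwise $\chi$ would violate positivity on that hyperplane. Thus the single negative direction is ``seen'' by the energy functional and the kernel by the moment functionals.

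I would then close the estimate by contradiction and compactness. Negating \eqref{eq:lin-coerc} yields a sequence $h_n$ with $\|h_n\|_{L^2(\dd\mu)}=1$, $\cF(h_n)\to\ell\le 0$, and both $\drint h_n E\,\ddxv\to 0$ and $\drint x_i h_n\,\ddxv\to 0$. Extracting a weak limit $h_n\rightharpoonup h_*$ and using compactness of $\cL$ gives $\cF(h_*)=\|h_*\|^2+(\ell-1)$; weak continuity of the (bounded) functionals places $h_*$ in their joint kernel, so $h_*\in\{\drint hE\,\ddxv=0\}$ and $\cF(h_*)\ge 0$. This forces $\ell=0$, $\|h_*\|_{L^2(\dd\mu)}=1$ (hence $h_n\to h_*$ strongly), and $\cF(h_*)=0$. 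As $h_*$ minimises $\cF$ over the hyperplane $\{\drint hE\,\ddxv=0\}$, the Euler–Lagrange equation reads $(\mathrm{Id}+\cL)h_* = \gamma\,g_E$, where $g_E := E\,|F'(E)|\,1_K$ represents $h\mapsto\drint hE\,\ddxv$ in $L^2(\dd\mu)$. If $\gamma=0$ then $h_*\in\ker(\mathrm{Id}+\cL)=\mathrm{span}\{\partial_{x_i}f^0\}$, and the non-degeneracy $\drint x_i\,\partial_{x_j}f^0\,\ddxv=-\delta_{ij}\|f^0\|_{L^1}$ together with $\drint x_i h_*\,\ddxv=0$ gives $h_*=0$, contradicting $\|h_*\|=1$. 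The quantitative $\delta$ is then produced from the positive spectral gap of $\mathrm{Id}+\cL$ and these finite-dimensional non-degeneracies.

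The hard part will be excluding the remaining case $\gamma\neq 0$. The solvability condition $\langle g_E,\partial_{x_i}f^0\rangle_{L^2(\dd\mu)}=\drint E\,\partial_{x_i}f^0\,\ddxv=0$ holds (write $E\,\partial_{x_i}f^0=\partial_{x_i}G(E)$ with $G'(E)=EF'(E)$), so $g_E\perp\ker$ and a solution $u$ of $(\mathrm{Id}+\cL)u=g_E$ exists; a direct computation shows that the degenerate zero-mode $h_*$ with $\gamma\neq 0$ can occur \emph{only if} the slope $\langle u,g_E\rangle_{L^2(\dd\mu)}=\drint uE\,\ddxv$ vanishes. In other words, what is truly needed is the \emph{strict} Vakhitov–Kolokolov-type condition $\drint uE\,\ddxv<0$, i.e. strict rather than merely wide positivity of $\cM$ transverse to the kernel. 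This is precisely where the polytropic structure enters: using the explicit scaling family of \eqref{eq:polyp} together with the strict subcriticality $p>p_c$ of Proposition~\ref{prop:interpol-clef} — equivalently the strict saturation of the Sobolev inequality underlying the variational characterisation of Proposition~\ref{prop:var-lmr-lions} — one checks that this slope is strictly negative, which rules out $\gamma\neq 0$ and completes the argument.
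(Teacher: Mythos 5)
Your skeleton --- contradiction, weak limit plus compactness of the nonlocal term, Euler--Lagrange identification of the limit, elimination via the moment functionals --- is essentially the paper's (the paper obtains the compactness from elliptic regularity of the Poisson equation rather than from your $\mathrm{Id}+\cL$ spectral framing, but that difference is cosmetic). However, your proposal defers precisely the two steps where all the work lies, and as written neither is proved. First, in the $\gamma=0$ branch you invoke the equality $\mathrm{Ker}(\mathrm{Id}+\cL)=\mathrm{Vect}\{\partial_{x_1}f^0,\partial_{x_2}f^0,\partial_{x_3}f^0\}$, but you only establish the inclusion $\supseteq$ (translation invariance). The reverse inclusion is not a soft consequence of compactness or of the admitted positivity: a priori the kernel could contain radial or higher-harmonic modes. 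The paper's Step 3 is devoted to exactly this point: the kernel equation $h=F'(E)\,\phi_h$ on $K$ is integrated in $v$ to produce the reduced stationary Schr\"odinger equation $\cA\phi_h=(\Delta_x+V_{f^0})\phi_h=0$ with $V_{f^0}=-\rint F'(E)\dd v$, and $\mathrm{Ker}(\cA)=\mathrm{Vect}\{\partial_{x_i}\phi_{f^0}\}$ is obtained by a fine spherical-harmonics analysis of that operator. Without this non-degeneracy input, your contradiction ($h_*=0$ versus $\|h_*\|_{L^2(K,\dd\mu)}=1$) does not close.

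Second, the $\gamma\neq0$ case. Your reduction is correct as far as it goes: with $(\mathrm{Id}+\cL)u=g_E$, a zero-energy minimizer with $\gamma\neq0$ forces $\drint uE\ddxv=0$, so what is needed is the non-vanishing (Vakhitov--Kolokolov) condition on this slope. But ``one checks that this slope is strictly negative'' is exactly the missing computation --- it is the second place where the polytropic structure must actually be used, and nothing in your text supplies it. The paper does not pass through the resolvent $u$ at all: writing $\cM h_\infty\in\mathrm{Vect}\{h_\infty/|F'(E)|,\,x_1 1_K,\,x_2 1_K,\,x_3 1_K,\,E 1_K\}$, it kills the coefficients successively by testing against $h_\infty$, against $\partial_{x_i}f^0$ (which also disposes of the $x_i$-multipliers that you silently dropped from your Euler--Lagrange relation), and --- for the $E$-coefficient --- against the scaling-type direction $x\cdot\Dv f^0-2v\cdot\Dv f^0$, which is where the explicit scaling family of the polytrope \eqref{eq:polyp} enters concretely. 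So your plan is structurally parallel to the paper's, but the two assertions you defer (the kernel identification and the slope/scaling computation) constitute the real content of the proof, and in their absence the argument is incomplete.
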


On retrouve le r\'esultat pr\'ec\'edent (avec cependant une constante
non constructive) lorsque $h = \{ g, E \}$ et $g$~impaire en $v$, mais
l'analyse du d\'efaut de coercitivit\'e pour les perturbations qui ne
sont pas sous cette forme est ici plus pr\'ecise.

\begin{proof}[\'Ebauche de preuve]
On va raisonner par l'absurde. On d\'efinit $\cZ$ l'ensemble des
fonctions $h \in L^2(K,d\mu)$ telles que 
\[
\drint h E \ddxv = \drint x_1 h \ddxv  = \drint x_2 h \ddxv = \drint
x_3 h \ddxv = 0.
\]
On sait par la caract\'erisation variationnelle de la
proposition~\ref{prop:var-lmr-lions} que $\langle \cM h, h \rangle \ge
0$ pour $h \in \cZ$. On suppose alors que
\[
I := \inf_{h \in \cZ, \ \| h\|_{L^2(K,d\mu)} =1} \langle \cM h, h \rangle
  = 0.
\]
\sk

\noindent \'Etape 1 -- \emph{Construction d'un minimiseur}. On
consid\`ere une suite minimisante $h_n \in \cZ$. Par compacit\'e
faible, quitte \`a extraire, la suite $h_n$ converge faiblement vers
une fonction $h_\infty$ dans $L^2(K,\dd \mu)$. Par ellipticit\'e de
l'\'equation de Poisson, on a la convergence forte de $\Dx \phi_{h_n}$
dans $L^2 (\R^3)$. De par la normalisation, on a
\[
\langle \cM h_n, h_n \rangle = 1 - \| \Dx \phi_{h_n} \|_{L^2 (\R^3)} ^2 \to 0 
\]
d'o\`u $ \| \Dx \phi_{h_\infty} \|_{L^2 (\R^3)}=1$ et la limite $h_\infty$ n'est pas
nulle.  \sk

\noindent \'Etape 2 -- \emph{Le minimiseur est dans le noyau de
  $\cM$}. Par la m\'ethode des multiplicateurs de Lagrange, on
d\'eduit du probl\`eme de minimisation sous contraintes ci-dessus que
\[
\cM h_\infty \in \mbox{Vect} \left\{ \frac{h_\infty}{|F'(E)|}, x_1
  1_K, x_2 1_K, x_3 1_K, E 1_K \right\}.
\]
En consid\'erant les int\'egrations de $\cM h_\infty$ contre
successivement $h_\infty$, $\partial_{x_1} f^0$, $\partial_{x_2} f^0$,
$\partial_{x_3} f^0$ et $x \cdot \Dv f^0 - 2 v \cdot \Dv f^0$, on
annule chacun des coefficients dans la d\'ecomposition selon cette
famille vectorielle, et on d\'eduit finalement que $\cM h_\infty =0$. 

\noindent \'Etape 3 -- \emph{\'Etude du noyau de $\cM$}. On montre que 
\[
\mbox{Ker} \cM = \mbox{Vect} \left\{ \partial_{x_1} f^0, \partial_{x_2} f^0,
\partial_{x_3} f^0 \right\}.
\]
L'inclusion de ces trois vecteurs dans le noyau de $\cM$ provient
simplement de la d\'erivation par rapport \`a $x_1$, $x_2$ et $x_3$ de
l'\'equation d\'efinissant la solution stationnaire 
\begin{equation}\label{eq:diffstat}
0 = \partial_{x_i} \left( (f^0)^{p-1} + \frac{|v|^2}2 + \phi_{f^0} + 1
\right) 1_K 
= \left( \frac{1}{F'(E)} \partial_{x_i} f^0 + \phi_{\partial_{x_i}f^0}
\right) 1_K.
\end{equation}

Pour l'inclusion r\'eciproque, on consid\`ere l'\'equation 
\begin{equation}\label{eq:stath}
\frac{h}{F'(E)} = \phi_h
\end{equation}
sur $K$, et l'on en d\'eduit l'\'equation \emph{r\'eduite} suivante
sur le potentiel par int\'egration en vitesse 
\[
\Delta_x \phi_h = \rho_h = \rint h \ddxv = \drint F'(E) \phi_h \ddxv. 
\]
Si l'on d\'efinit le potentiel effectif 
\[
V_{f^0} := - \rint F'(E) \dd v,
\]
on fait donc appara\^itre une \emph{\'equation de Schr\"odinger}
stationnaire
\begin{equation}\label{eq:opSchr}
\cA \phi_h := \left( \Delta_x  + V_{f^0} \right) \phi_h =0.
\end{equation}
Le noyau de cet op\'erateur $\cA$ est alors \'etudi\'e de mani\`ere
fine par d\'ecomposition selon les harmoniques sph\'eriques, avec pour
r\'esultat
\[
\mbox{Ker}(\cA) = \mbox{Vect}\left\{ \partial_{x_1}
  \phi_{f^0}, \partial_{x_2} \phi_{f^0}, \partial_{x_3} \phi_{f^0}
\right\}.
\]
On d\'eduit donc que $\phi_h \in \mbox{Vect}\left\{ \partial_{x_1}
  \phi_{f^0}, \partial_{x_2} \phi_{f^0}, \partial_{x_3} \phi_{f^0}
\right\}$, puis en utilisant \eqref{eq:stath} et \eqref{eq:diffstat}
que $h \in \mbox{Vect}\left\{ \partial_{x_1} f^0, \partial_{x_2}
  f^0, \partial_{x_3} f^0 \right\}$.

\noindent \'Etape 4 -- \emph{Conclusion}. Puisque $h \in \cZ$, en
int\'egrant la relation lin\'eaire $h \in
\mbox{Vect}\left\{ \partial_{x_1} f^0, \partial_{x_2}
  f^0, \partial_{x_3} f^0 \right\}$ contre $x_1$, $x_2$ et $x_3$, on
obtient successivement que tous les coefficients de la combinaison
lin\'eaire sont nuls, soit $h=0$, ce qui aboutit \`a une
contradiction. 
\end{proof}

Mentionnons qu'au moyen de cette in\'egalit\'e de coercitivit\'e,
Lemou, M\'ehats et Rapha\"el d\'emontrent ensuite dans \cite{LMR3} un
th\'eor\`eme de stabilit\'e lin\'earis\'ee qui \'enonce que, pour
toute donn\'ee initiale dans un \og espace d'\'energie\fg{}
(correspondant au probl\`eme de minimisation non-lin\'eaire), le
semi-groupe lin\'earis\'e cro\^it au plus en $O(t^2)$. Ils donnent
\'egalement une d\'ecomposition de l'espace $L^2(K,d\mu)$ qui localise
plus pr\'ecis\'ement les modes de croissance alg\'ebrique.

\section{La stabilit\'e non-lin\'eaire}
\label{sec:la-stabilite-non}

Nous allons maintenant suivre le cheminement des diff\'erents travaux
sur la stabilit\'e non-lin\'eaire. En dehors du dernier travail
\cite{LMR9} que nous d\'etaillerons, nous donnons seulement les
\'etapes principales.

\subsection{Premi\`eres approches variationnelles}
\label{sec:appr-par-minim}

Le grand succ\`es de ce programme conduit par diff\'erents groupes
ind\'ependants est la preuve de la stabilit\'e de tous les mod\`eles
polytropiques discut\'es pr\'ec\'edemment.  Les ingr\'edients communs
\`a ces diff\'erentes approches sont:
\begin{enumerate}
\item[(a)] la caract\'erisation de la solution stationnaire $f^0$
  \'etudi\'ee comme un \emph{\'etat fondamental} (\og ground
  state\fg{}) d'un probl\`eme de minimisation de la forme
\begin{equation}\label{eq:minabs}
\min_{\cV(f) = \mbox{{\scriptsize cstes}}} \ \cU(f) 
\end{equation}
pour une certaine fonctionnelle $\cU$ et un ensemble de contraintes
$\cV$, qui \emph{sont pr\'eserv\'ees par l'\'evolution non-lin\'eaire};
\item[(b)] la preuve d'une \emph{propri\'et\'e de s\'eparation des
    \'etats fondamentaux}: la solution stationnaire $f^0$ est isol\'ee
  parmi les minimiseurs du probl\`eme de minimisation pr\'ec\'edent;
\item[(c)] la \emph{compacit\'e des suites minimisantes}, qui repose
  souvent sur une technique de concentration-compacit\'e
  \cite{Li1,Li2}, sachant que la compacit\'e doit \^etre obtenue dans
  un sens assez fort pour permettre d'obtenir \`a la limite une
  solution stationnaire du syst\`eme;
\item[(d)] la preuve de la stabilit\'e est alors fond\'ee sur un
  raisonnement par l'absurde: on consid\`ere une suite minimisante qui
  ne reste pas proche de la solution stationnaire $f^0$, puis en
  appliquant (c) on aboutit, \`a la limite, \`a un \'etat stationnaire qui
  minimise le probl\`eme \eqref{eq:minabs} mais qui est diff\'erent de
  $f^0$, ce qui contredit (a)-(b).
\end{enumerate}

L'id\'ee d'utiliser une fonctionnelle d'\'energie-Casimir bien choisie
et d'\'etudier ses points critiques et sa convexit\'e a \'et\'e
introduite pour les \'equations d'Euler incompressibles en dimension
$2$ par Arnold \cite{Ar1,Ar2,Ar3}, et elle a ensuite \'et\'e
appliqu\'ee avec succ\`es aux \'equations de Vlasov-Poisson pour les
plasmas dans \cite{HMRW,Re1} (voir \'egalement les r\'ef\'erences
incluses dans \cite{HMRW} pour les travaux ant\'erieurs de physique
sur la stabilit\'e formelle pour les plasmas, ainsi que \cite{Gu1,Gu2}
pour des travaux math\'ematiques dans le cas de plasmas
magn\'etiques).

En ce qui concerne le syst\`eme de Vlasov-Poisson gravitationnel, le
premier travail pr\'ecurseur en ce sens est d\^u \`a Wolansky
\cite{Wo-i}: ce dernier caract\'erise l'\'equilibre comme le
minimiseur d'une fonctionnelle d'\'energie-Casimir
\[
\min_{\| f\|_{L^1} = 1 \ \& \ f \ge 0} \cH_\cC(f) \quad \mbox{avec}
\quad \cH_\cC(f) = \cH(f) + \drint \cC(f) \ddxv.
\]
Cette approche a \'et\'e d\'evelopp\'ee de mani\`ere syst\'ematique
par Guo et Rein \cite{Gu3,GR1,Re2} et a permis d'obtenir la
stabilit\'e des polytropes \eqref{eq:poly} pour $0<n \le 3/2$. Les cas
$3/2 < n \le 7/2$ ont ensuite \'et\'e \'etudi\'es dans
\cite{Gu4,GR2,RG,Sc2,Ha}. En particulier les articles \cite{Gu4,GR2}
modifient le probl\`eme variationnel de la fa\c con suivante:
\[
\min \cH(f) \quad \mbox{sous les contraintes} \quad f \ge 0, \ \frac{n}{n+1} \| f
\|_{L^{1+1/n}(\R^6)} ^{1+1/n} + \left( \frac72 - n\right) \|
f\|_{L^1(\R^6)} = M.
\]

Parall\`element et de mani\`ere l\'eg\`erement diff\'erente \`a ces
travaux, Dolbeault, S\'anchez et Soler \cite{DSS} introduisent ensuite
un probl\`eme de minimisation diff\'erent du type
\[
\min \cH(f) \quad \mbox{sous les contraintes} \quad f \ge 0, \ \| f
\|_{L^1 (\R^6)} = M, \ \| f\|_{L^\infty (\R^6)} \le 1.
\]
Ce probl\`eme de minimisation se r\'e\'ecrit de mani\`ere
\'equivalente et naturelle par saturation d'in\'egalit\'e
fonctionnelle de type Poincar\'e reliant l'\'energie potentielle,
l'\'energie cin\'etique, et les normes utilis\'ees: 
\[
\min \frac{\cH_{cin}(f)}{\cH_{pot}(f)} \quad \mbox{sous les
  contraintes} \quad f \ge 0, \ \| f \|_{L^1 (\R^6)} = M, \ \|
f\|_{L^\infty (\R^6)} \le 1.
\]
Cette approche a permis de traiter le cas formel limite $n=0$ dans
\eqref{eq:poly} (en plus de cas de polytropes anisotropes que nous
n'\'evoquons pas ici). S\'anchez et Soler \cite{SS} ont ensuite
g\'en\'eralis\'e cette approche \`a un espace de contrainte $\| f
\|_{L^1 (\R^6)} = M$ et $\| f\|_{L^p (\R^6)} \le 1$, et ont pu montrer
la stabilit\'e au sens de la distance $L^1 (\R^6)$ pour les polytropes
\eqref{eq:poly} avec $0 \le n < 7/2$. \footnote{Nous renvoyons
  \'egalement au travail \cite{CSS} qui \'etudie selon une strat\'egie
  proche les propri\'et\'es de stabilit\'e orbitale pour l'\'equation
  de Nordstr\"om-Vlasov dans un cadre relativiste.}  Un des apports de
ces travaux semble conceptuel: montrer que le probl\`eme variationnel
sous-jacent est reli\'e \`a des in\'egalit\'es de type Sobolev
optimales.

Simultan\'ement, Lemou, M\'ehats et Rapha\"el \cite{LMR1,LMR4}
caract\'erisent les polytropes \`a partir d'une in\'egalit\'e de type
Sobolev optimale correspondant aux in\'egalit\'es d'interpolation de
l'\'equation non-lin\'eaire. Le probl\`eme de minimisation en terme de
fonctionnelle d'\'energie et d'espace de contraintes est \'equivalent
\`a celui consid\'er\'e par S\'anchez et Soler. Mais ils font ainsi le
lien avec l'in\'egalit\'e d'interpolation de la
proposition~\ref{prop:interpol-clef}, et ils effectuent aussi un
retour conceptuel \`a la m\'ethode originelle de Cazenave et Lions
\cite{CL} pour l'\'etude de la stabilit\'e des solitons par
concentration-compacit\'e pour l'\'equation de Schr\"odinger.

Ils d\'emontrent la proposition suivante, que nous avons d\'ej\`a
\'evoqu\'ee et utilis\'ee dans l'\'etude lin\'earis\'ee pour la preuve
de la proposition~\ref{prop:weinsteinlin}.
\begin{prop}[\cite{LMR1,LMR4}]\label{prop:var-lmr-lions}
  Soient $p \in ]p_c, +\infty[$ et $f^0$ un polytrope d\'efini par
  \eqref{eq:polyp}. Alors le probl\`eme de minimisation 
\[
\min_{f \in \cE, \ f \not = 0} \frac{ \| |v|^2 f \|_{L^1 (\R^6)}^{\theta_1}
  \| f \|_{L^p (\R^6)} ^{\theta_2} \| f \|_{L^1 (\R^6)} ^{\theta_3}}{\| \Dx \phi_f
  \|^2 _{L^2 (\R^3)}} \quad \mbox{avec} \quad \theta_1 =\frac12, \ \theta_2
= \frac{p}{3 (p-1)}, \ \theta_3 = \frac{(7p-9)}{6(p-1)}
\]
(le dernier coefficient est bien positif du fait que $p > p_c =
9/7$) est atteint sur la famille \`a quatre param\`etres 
\[
\gamma f^0\left(\frac{x-x_0}{\lambda}, \mu v \right), \quad \gamma \in
\R_+ ^*, \ \lambda \in \R_+^*, \ \mu \in \R_+^*, \ x_0 \in \R^3.
\]
\end{prop}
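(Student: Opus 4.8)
Le plan est de reconna\^itre ce probl\`eme de minimisation comme la recherche de la \emph{meilleure constante} dans l'in\'egalit\'e d'interpolation de la proposition~\ref{prop:interpol-clef}. En identifiant $\||v|^2 f\|_{L^1} = 2\,\cH_{cin}(f)$ et $\|\Dx\phi_f\|_{L^2}^2 = 2\,\cH_{pot}(f)$, cette in\'egalit\'e se r\'e\'ecrit exactement
\[
\|\Dx\phi_f\|_{L^2}^2 \le C\, \||v|^2 f\|_{L^1}^{\theta_1}\,\|f\|_{L^p}^{\theta_2}\,\|f\|_{L^1}^{\theta_3},
\]
de sorte que la fonctionnelle $J(f)$ \`a minimiser est minor\'ee par la constante $1/C > 0$. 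La premi\`ere \'etape est donc d'observer que $I := \inf J > 0$, la proposition revenant \`a affirmer que cette meilleure constante est atteinte, pr\'ecis\'ement sur les polytropes.

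Je v\'erifierais ensuite que $J$ est invariante sous la famille \`a quatre param\`etres $f \mapsto \gamma\, f\big((x-x_0)/\lambda, \mu v\big)$: un calcul de changement de variables montre que les exposants $\theta_1,\theta_2,\theta_3$ sont pr\'ecis\'ement ceux pour lesquels les puissances de $\gamma$, $\lambda$ et $\mu$ au num\'erateur compensent celles de $\|\Dx\phi_f\|_{L^2}^2$ (on a en particulier $\theta_1+\theta_2+\theta_3 = 2$). Ces trois param\`etres d'\'echelle permettent, \'etant donn\'ee une suite minimisante $(f_n)$, de normaliser $\|f_n\|_{L^1} = \|f_n\|_{L^p} = \|\Dx\phi_{f_n}\|_{L^2}^2 = 1$ (la matrice des exposants d'\'echelle \'etant inversible pour $p>1$), la minimisation de $J$ se ramenant alors \`a celle de l'\'energie cin\'etique $\cH_{cin}(f_n)$ sous ces trois contraintes.

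Le point central de la preuve --- et le principal obstacle --- est la compacit\'e des suites minimisantes normalis\'ees, que j'obtiendrais par la m\'ethode de concentration-compacit\'e de Lions \cite{Li1,Li2}, dans l'esprit de Cazenave-Lions \cite{CL}. Il s'agit d'\'ecarter les deux sc\'enarios pathologiques. L'\'evanouissement est exclu car il forcerait $\rho_{f_n} \to 0$, donc $\|\Dx\phi_{f_n}\|_{L^2} \to 0$, en contradiction avec la normalisation. La dichotomie est exclue par une in\'egalit\'e de \emph{sous-additivit\'e stricte} du niveau $I$ vis-\`a-vis du d\'ecoupage de la masse, qui exploite de mani\`ere essentielle le caract\`ere non-local du terme d'\'energie potentielle. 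On obtient ainsi, quitte \`a translater en $x$ et \`a extraire, la convergence de $f_n$ vers une limite $f_\infty \ne 0$: la convergence forte de $\rho_{f_n}$ (via la r\'egularit\'e elliptique de l'\'equation de Poisson et l'in\'egalit\'e de Hardy-Littlewood-Sobolev) assure $\|\Dx\phi_{f_\infty}\|_{L^2}^2 = 1$, tandis que la semi-continuit\'e inf\'erieure (lemme de Fatou) de $\cH_{cin}$ et des normes $L^1$, $L^p$ garantit que $f_\infty$ est bien un minimiseur.

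La derni\`ere \'etape est l'identification du minimiseur par son \'equation d'Euler-Lagrange. En \'ecrivant l'annulation de la d\'eriv\'ee premi\`ere de $\log J$ dans toute direction admissible $\delta f$ (sous la contrainte $f \ge 0$), on obtient, sur le support $\{f_\infty > 0\}$, la relation
\[
\frac{\theta_2\, f_\infty^{p-1}}{\|f_\infty\|_{L^p}^p} = -\frac{\theta_1}{\||v|^2 f_\infty\|_{L^1}}\,|v|^2 - \frac{2}{\|\Dx\phi_{f_\infty}\|_{L^2}^2}\,\phi_{f_\infty} - \frac{\theta_3}{\|f_\infty\|_{L^1}},
\]
d'o\`u $f_\infty = \left[\, c_0 - \alpha\,|v|^2 - \beta\,\phi_{f_\infty}\,\right]_+^{1/(p-1)}$ pour certaines constantes $\alpha,\beta > 0$ et $c_0 < 0$. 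C'est exactement un polytrope d'exposant $n = 1/(p-1)$ dans une \og \'energie g\'en\'eralis\'ee\fg{} $\alpha|v|^2 + \beta\phi_{f_\infty}$; un dernier calcul d'\'echelle montre qu'en ajustant $\gamma,\lambda,\mu$, toute telle fonction s'\'ecrit sous la forme $\gamma\, f^0\big((x-x_0)/\lambda, \mu v\big)$ --- le rapport $\alpha/\beta$, qui vaut $1/2$ pour $f^0$ lui-m\^eme, \'etant pr\'ecis\'ement celui que modifie la dilatation $\mu$ dans la variable de vitesse. La difficult\'e technique r\'esiduelle consiste \`a justifier la positivit\'e stricte du minimiseur sur un ouvert et la validit\'e de l'\'equation d'Euler-Lagrange jusqu'au bord de son support.
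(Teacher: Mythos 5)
Le texte de l'exposé ne démontre pas cette proposition~: il la cite de \cite{LMR1,LMR4}, en indiquant seulement la stratégie (saturation de l'inégalité d'interpolation de la proposition~\ref{prop:interpol-clef}, concentration-compacité dans l'esprit de Cazenave--Lions \cite{CL}). Votre plan suit précisément cette stratégie, et vos vérifications calculatoires sont exactes~: minoration $\inf J \ge 1/C>0$ par la proposition~\ref{prop:interpol-clef}, invariance d'échelle de $J$ (on a bien $\theta_1+\theta_2+\theta_3=2$ et la compensation des puissances $\lambda^5\mu^{-6}$), inversibilité de la matrice des exposants d'échelle pour $p>1$, et forme de l'équation d'Euler--Lagrange avec $\alpha,\beta>0$ et $c_0<0$ --- ce dernier signe utilisant $\theta_3>0$, c'est-à-dire $p>p_c$, ce qui est bien là où la sous-criticité intervient.

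Il y a cependant une lacune réelle dans l'étape finale d'identification. Un \og calcul d'échelle\fg{} permet seulement de normaliser $(c_0,\alpha,\beta)$ en $(-1,\tfrac12,1)$, c'est-à-dire de ramener le minimiseur à une solution de l'équation auto-consistante $f=\left(-1-\tfrac{|v|^2}{2}-\phi_f\right)_+^{1/(p-1)}$, qui est exactement l'équation \eqref{eq:polyp} définissant $f^0$. Mais rien ne garantit a priori que cette équation n'admette pas d'autres solutions que les translatées de $f^0$~: pour conclure, il faut (i) la symétrie radiale en $x$ du minimiseur --- par exemple via l'inégalité de réarrangement de Riesz et son cas d'égalité (le réarrangement en $x$ à $v$ fixé préserve le numérateur de $J$ et augmente $\|\Dx\phi_f\|_{L^2}^2$, strictement sauf si les tranches sont déjà radiales décroissantes à translation commune près), ou via la méthode des plans mobiles sur l'équation elliptique réduite --- puis (ii) l'unicité de la solution radiale de l'équation de Lane--Emden $\frac{1}{r^2}\left(r^2\phi'\right)'=c_n(-1-\phi)_+^{n+3/2}$ avec $\phi\to 0$ à l'infini (argument de tir). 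Ces deux ingrédients constituent le c\oe ur de l'identification dans \cite{LMR1,LMR4}, et la difficulté résiduelle que vous signalez (positivité stricte, équation d'Euler--Lagrange au bord du support) n'en est pas le point dur. Signalons enfin que la sous-additivité stricte excluant la dichotomie est affirmée sans démonstration~; dans un problème invariant d'échelle comme celui-ci, sa mise en \oe uvre demande du soin, et c'est l'une des raisons pour lesquelles la littérature (cf. \cite{SS} et la discussion du texte) travaille plutôt avec le hamiltonien $\cH$ sous contraintes $\|f\|_{L^1}$ et $\|f\|_{L^p}$ fixées, formulation équivalente où l'argument de scission de la masse est plus naturel.
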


En proc\'edant selon les grandes lignes de la strat\'egie d\'ecrite
plus haut, les auteurs d\'emontrent ensuite dans \cite{LMR4} la
stabilit\'e des polytropes \eqref{eq:poly} pour $0 < n < 7/2$.

Cette approche variationnelle a pu traiter de mani\`ere satisfaisante
les mod\`eles polytropiques. Elle semblait cependant impuissante \`a
traiter des mod\`eles plus g\'en\'eraux, et en particulier le mod\`ele
de King. La difficult\'e est que pour les types de probl\`emes de
minimisation que l'on vient d'\'enum\'erer, la propri\'et\'e de
s\'eparation des \'etats fondamentaux (b) d\'ecrite plus haut n'est en
g\'en\'eral plus v\'erifi\'ee, sans m\^eme parler de la propri\'et\'e
(c) de compacit\'e des suites minimisantes.

\subsection{\textup{Approche directe non-variationnelle par
    lin\'earisation}}
\label{sec:appr-non-vari}

Il existe essentiellement deux mani\`eres d'aborder la question de la
stabilit\'e d'un syst\`eme d'\'evolution non-lin\'eaire: d'une part
l'approche variationnelle o\`u l'on exprime la solution stationnaire
comme solution d'un probl\`eme de minimisation qui est invariant le
long de l'\'evolution, et d'autre part l'approche \og directe\fg{} par
lin\'earisation et contr\^ole du reste. Nous allons maintenant parler
des travaux s'inscrivant dans cette derni\`ere approche.

Dans le cas de cette approche directe, il faut tout d'abord quantifier
pr\'ecis\'ement les propri\'et\'es de stabilit\'e du syst\`eme
lin\'earis\'e. L'in\'egalit\'e de coercitivit\'e d'Antonov
\eqref{eq:Antonov-simple} est un point de d\'epart naturel pour
cela. Cependant il faut ensuite surmonter deux difficult\'es
importantes:
\begin{itemize}
\item il faut contr\^oler les termes d'ordre sup\'erieur ou \'egal \`a
  trois dans le d\'eveloppement de Taylor de la fonctionnelle
  d'\'energie-Casimir au voisinage de la solution stationnaire $f^0$
  consid\'er\'ee;
\item et l'autre difficult\'e est que l'in\'egalit\'e de coercitivit\'e
  d'Antonov \eqref{eq:Antonov-simple} n'est valide que pour les
  perturbations dynamiquement accessibles de la forme $h = \{ g,
  f^0\}$, et que l'on souhaiterait s'affranchir de cette restriction. 
\end{itemize}

La premi\`ere tentative d'utiliser cette approche remonte \`a Wan
\cite{Wa1} mais la preuve semble incompl\`ete. La deuxi\`eme tentative
du m\^eme auteur \cite{Wa2} est plus aboutie mais semble reposer sur
une hypoth\`ese non r\'ealiste de positivit\'e de la fonctionnelle
$\cF$ pour toute perturbation $h$ qui exclut la plupart des mod\`eles
physiques.

Le premier article traitant du mod\`ele de King, et suivant cette
approche directe, est d\^u \`a Guo et Rein \cite{GR3}. Les
ingr\'edients cl\'es de ce travail sont:
\begin{itemize}
\item la d\'efinition d'une classe de perturbation \`a sym\'etrie
  sph\'erique 
\begin{multline*}
\mathcal S_{f^0} = \Big\{ f \in L^1(\R^6), \ f=f(E,L) \ge 0 \ \mbox{
  tel que}\\ 
\fa \cC \in C^2(\R_+^2) \mbox{ avec } \cC(0,L) \equiv \partial_1
\cC(0,L) \equiv 0, \ \partial_1 ^2 \cC \mbox{ born\'e, alors } \\
\drint \cC(f,L) \ddxv = \drint \cC(f^0,L) \ddxv \Big\}
\end{multline*}
qui est: d'une part \emph{stable par le syst\`eme d'\'evolution
  non-lin\'eaire} du fait que $L$ est conserv\'e le long des
trajectoires et donc les fonctionnelles $\drint \cC(f,L) \ddxv$ sont
des invariants du syst\`eme et, d'autre part, incluse dans les
perturbations de la forme $h = \{ g, f^0\}$ (ce dernier point est
d\'emontr\'e en r\'esolvant le syst\`eme diff\'erentiel ordinaire
associ\'e);
\item la d\'emonstration par contradiction d'une propri\'et\'e de
  convexit\'e stricte de la fonctionnelle d'\'energie-Casimir $\cH$ au
  voisinage de $f^0$: 
\[
\cH(f) - \cH(f^0) \ge C_0 \| \Dx \phi_f - \Dx \phi_{f^0}
\|_{L^2(\R^3)} ^2
\] 
pour une constante $C_0 >0$ et avec 
\[
d(f,f^0) := \cH(f) - \cH(f^0) + \| \Dx \phi_f - \Dx \phi_{f^0}
\|_{L^2(\R^3)} ^2 \quad \mbox{ assez petit.}
\]
\end{itemize}
Il y a deux limitations importantes dans les r\'esultats
ainsi obtenus. D'une part les perturbations consid\'er\'ees sont
restreintes \`a la classe $\cS_{f^0}$, qui est \og trop petite\fg{},
et en particulier incluses dans l'ensemble des fonctions
\'equimesurables \`a $f^0$. D'autre part, on aimerait bien s\^ur
s'affranchir totalement de la contrainte de sym\'etrie sph\'erique
pour ces perturbations.

La premi\`ere de ces limitations a ensuite \'et\'e lev\'ee dans le
travail \cite{GL} de Guo et Lin. Ils d\'emontrent ainsi la stabilit\'e
du mod\`ele de King par petite perturbation \`a sym\'etrie
sph\'erique. Les \'el\'ements principaux de leur travail sont les
suivants. 
\begin{itemize}
\item Ils consid\`erent la fonctionnelle d'\'energie-Casimir 
\[
\cH_\cC(f) = \cH(f) + \drint \cC(f) \ddxv \quad \mbox{ avec } \quad 
\cC(f) := (1+f) \ln (1+f) -1 - f
\]
et la distance associ\'ee 
\[
d_\cC(f,f^0) := \cH_\cC(f) - \cH_\cC(f^0) + \| \Dx \phi_f - \Dx \phi_{f^0}
\|_{L^2(\R^3)} ^2.
\]
\item Afin de montrer la coercitivit\'e de cette fonctionnelle
  d'\'energie-Casimir au voisinage de $f^0$ sans faire appara\^itre de
  termes d'ordre sup\'erieur, ils font appel \`a une in\'egalit\'e de
  dualit\'e convexe \'el\'ementaire mais astucieusement utilis\'ee,
  qui permet de contr\^oler par en-dessous 
\begin{multline*}
  \cH_\cC(f) - \cH_\cC(f^0)  \ge \mbox{cste} \, \left( \rint |\Dx
    \phi_{f-f^0}|^2 \dd x +
    \drint F'(E) | \phi_{f-f^0} 
    - \mathcal P \phi_{f-f^0} |^2 \ddxv\right)\\ - \rint | \mathcal P
  \phi_{f-f^0} |^2 \dd x \ - \dots
\end{multline*}
o\`u les trois points d\'esignent des termes contr\^olables par les
invariants du syst\`eme, et l'op\'erateur $\mathcal P$ est le
projecteur sur le noyau de $D:=v \cdot \Dx - \Dx \phi_{f^0} \cdot \Dv$
(c'est un op\'erateur de moyennisation sur chacun des tores invariants
du flot compl\`etement int\'egrable associ\'e \`a cet op\'erateur de
transport).
\item Malheureusement le terme n\'egatif $-\rint | \mathcal P
  \phi_{f-f^0} |^2 \dd x$ dans l'\'equation ci-dessus semble
  difficilement contr\^olable, aussi les auteurs ont-ils l'id\'ee
  d'approcher l'op\'erateur $\mathcal P$ par une suite bien
  construite d'op\'erateurs $\mathcal P_\ell$ de rang fini, et de
  remplacer $\mathcal P$ par $\mathcal P_\ell$ dans l'argument
  ci-dessus. Le terme n\'egatif $-\rint | \mathcal P_\ell \phi_{f-f^0}
  |^2 \dd x$ est alors facilement contr\^olable en utilisant un nombre
  fini de fonctionnelles de Casimir invariantes.
\item Enfin il reste \`a \'etudier la coercitivit\'e du terme 
\[
\left( \rint |\Dx \phi_{f-f^0}|^2 \dd x +
\drint F'(E) | \phi_{f-f^0} - \mathcal P \phi_{f-f^0} |^2 \ddxv
\right).
\]
Cette derni\`ere implique sans mal une estimation de coercitivit\'e du
m\^eme terme avec $\mathcal P_\ell$ \`a la place de $\mathcal P$, pour
$\ell$ assez grand. Cela revient \`a \'etudier la positivit\'e de
l'op\'erateur 
\[
\cA \phi = - \Delta_x \phi + \rint F'(E) (\phi-\mathcal P \phi)\dd v
\]
agissant uniquement sur le potentiel $\phi$. En remarquant simplement
que 
\[
\mbox{Im}(1-\mathcal P) \, \bot \, \mbox{Ker}(D) \quad \mbox{ et } \quad \mbox{Ker}(D) =
\mbox{Im}(D)^\bot,
\]
on d\'eduit facilement que $(\phi-\mathcal P \phi) = \{ h, f^0\}$ avec
$h$ impaire, ce qui permet d'appliquer l'in\'egalit\'e de
coercitivit\'e d'Antonov \eqref{eq:Antonov-simple}, et de conclure.
\end{itemize}
Ce travail int\'eressant semble pouvoir se g\'en\'eraliser \`a des
mod\`eles sph\'eriques d\'ecroissants plus g\'en\'eraux que le
mod\`ele de King. La seconde limitation de cette m\'ethode,
c'est-\`a-dire le fait de ne consid\'erer que des perturbations \`a
sym\'etrie sph\'erique, semble par contre plus s\'ev\`ere. Un des
apports principaux du travail \cite{LMR9}, que nous allons maintenant
discuter, est de s'\^etre affranchi de cette limitation.

\subsection{\textup{Nouvelle approche variationnelle par
    r\'earrangement}}
\label{sec:nouv-appr-vari}

Apr\`es ce d\'etour par une approche non-variationnelle, nous allons
maintenant revenir \`a une approche variationnelle, mais sous un angle
nouveau. On voit qu'un d\'efaut de l'approche variationnelle par
\'energie-Casimir est qu'elle semble impuissante \`a reformuler sous
forme de probl\`eme de minimisation certains mod\`eles
stationnaires d\'ecroissants. Cependant, dans le m\^eme temps,
l'approche directe par lin\'earisation semble limit\'ee par
l'in\'egalit\'e de coercitivit\'e d'Antonov elle-m\^eme et par les
difficult\'es inh\'erentes aux contr\^oles des termes d'ordre
sup\'erieur dans le d\'eveloppement du hamiltonien.

Le travail \cite{LMR7} constitue une premi\`ere avanc\'ee en
introduisant l'id\'ee d'exploiter les propri\'et\'es
d'\'equimesurabilit\'e du flot: m\^eme si la propri\'et\'e de
s\'eparation des \'etats fondamentaux n'est pas v\'erifi\'ee pour le
probl\`eme de minimisation avec un nombre fini de contraintes,
l'\'equimesurabilit\'e de la solution \`a sa donn\'ee initiale permet
de prouver dans certains cas une propri\'et\'e de s\'eparation
\emph{locale}. Finalement dans les travaux \cite{LMR8,LMR9}, Lemou,
M\'ehats et Rapha\"el r\'esolvent compl\`etement ces
contradictions. Ils prouvent le th\'eor\`eme suivant dans le cas de
mod\`eles sph\'eriques isotropes.
\begin{theo}[\cite{LMR9}]
  \label{theo:final}
  Soit $f^0=F(E) \ge 0$ une solution stationnaire continue, non nulle,
  \`a support compact, du syst\`eme \eqref{eq:1}-\eqref{eq:2}, pour
  laquelle il existe $E_0<0$ tel que $F(E)=0$ pour $E \ge E_0$, $F$
  est $C^1$ sur $]-\infty,E_0[$ et $F'<0$ sur $]-\infty,E_0[$.

  Alors $f^0$ est orbitalement stable au sens suivant: pour tous $M>0$
  et $\varepsilon>0$ il existe $\eta>0$ tel que, pour toute donn\'ee
  initiale 
\[
f_{in} \in \cE := \{ g \ge 0, \ g \in L^1\cap L^\infty(\R^6), \ |v|^2
g \in L^1(\R^6) \}
\]
telle que 
\[
\| f_{in} - f^0 \|_{L^1(\R^6)} \le \eta, \quad \cH(f_{in}) \le
\cH(f^0) + \eta, \quad \| f_{in}\|_{L^\infty(\R^6)} \le \| f^0
\|_{L^\infty(\R^6)} + M,
\]
alors toute solution faible issue de cette donn\'ee initiale v\'erifie 
\[
\fa t \ge 0, \quad \drint \left| (1+|v|^2) \big( f(t,x,v) -
  f^0(x-z(t), v) \big) \right| \ddxv \le \varepsilon.
\]
\end{theo}

Avant de d\'etailler la preuve, donnons les id\'ees essentielles: 
\begin{itemize}
\item en s'inspirant d'id\'ees introduites dans la litt\'erature physique
  \cite{Ga,L-B3,WZS,Al} les auteurs d\'emontrent que le hamiltonien
    poss\`ede une propri\'et\'e de monotonie par rapport aux
    r\'earrangements \emph{selon l'\'energie microscopique};
\item apr\`es ce r\'earrangement, on est alors ramen\'e \`a un
  \emph{probl\`eme variationnel de minimisation sur le potentiel
    gravitationnel uniquement}, pour lequel la solution stationnaire
  est bien un minimum isol\'e;
\item enfin pour ce probl\`eme de minimisation r\'eduit, ils
  d\'emontrent une in\'egalit\'e de coercitivit\'e d'Antonov
  g\'en\'eralis\'ee dans ce contexte, et font le lien entre la partie
  radiale de cette in\'egalit\'e et une in\'egalit\'e de
  type Poincar\'e;
\item la fin de la preuve est bas\'ee sur un argument de compacit\'e
  pour des suites minimisantes \og g\'en\'eralis\'ees\fg{} dont le
  r\'earrangement selon l'\'energie microscopique est une suite
  minimisante pour le probl\`eme r\'eduit sur le champ gravitationnel,
  et la compacit\'e est extraite \`a partir de la coercitivit\'e de
  l'\'etape pr\'ec\'edente.
\end{itemize}
Ce travail met donc \`a jour une nouvelle structure variationnelle \og
cach\'ee sous les r\'earrangements selon l'\'energie
microscopique\fg{}, pour laquelle l'approche variationnelle est bien plus
simple et naturelle. Il r\'ev\`ele \'egalement le lien entre la
coercitivit\'e de ce probl\`eme de minimisation r\'eduit et un
probl\`eme d'in\'egalit\'e fonctionnelle de type Poincar\'e.

\subsubsection{R\'earrangement selon l'\'energie microscopique}
\label{sec:rearr-selon-lenerg}

Rappelons tout d'abord la notion classique de \emph{r\'earrangement
  sym\'etrique} (voir par exemple \cite[Chapitre~3]{LL}). \'Etant
donn\'e un ensemble $A \subset \R^6$ mesurable, on d\'efinit son
r\'earrangement sym\'etrique $A^*$ comme \'etant la boule ouverte
centr\'ee en z\'ero et de m\^eme volume que $A$ (pour une norme
donn\'ee sur $\R^6$). \'Etant donn\'ee une fonction $f \ge 0$
int\'egrable sur $\R^6$, on d\'efinit alors son r\'earrangement
sym\'etrique $f^*$ comme \'etant la fonction positive sur $\R^6$ dont
les ensembles de niveau sup\'erieur sont obtenus par r\'earrangement
sym\'etrique des ensembles de niveau sup\'erieur correspondants de
$f$, ce qui donne la formule suivante par int\'egration par tranche
\[
f^*(x,v) = \int_0 ^{+\infty} 1_{\{ f \ge s \}} ^* \dd s \quad \mbox{
  avec } \quad 1_A ^* = 1_{A^*}.
\]
La fonction $f^*$ est alors radialement sym\'etrique, d\'ecroissante,
et \'equimesurable \`a $f$. Rappelons la propri\'et\'e \'el\'ementaire
suivante sur les r\'earrangements sym\'etriques.
\begin{lemm}
  Pour $f \ge 0$ int\'egrable sur $\R^6$ on a 
\[
\drint f^*(x,v) |(x,v)|_{\R^6} \ddxv \le \drint f(x,v) |(x,v)|_{\R^6} \ddxv
\]
o\`u $|(x,v)|_{\R^6}$ d\'esigne la norme consid\'er\'ee sur $\R^6$. 
\end{lemm}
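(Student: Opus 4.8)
Le plan est de se ramener, par intégration par tranche, à une inégalité géométrique élémentaire sur les ensembles de niveau, puis de conclure par le principe de la \og{}baignoire\fg{} (bathtub principle). Posons $w(x,v) := |(x,v)|_{\R^6}$: c'est une fonction positive dont les ensembles de sous-niveau $\{ w < R\}$ sont exactement les boules ouvertes centrées en zéro pour la norme considérée, \emph{i.e.} les réarrangés $A^*$. Si $\drint f\, w \ddxv = +\infty$, l'inégalité est triviale; on peut donc supposer ce membre de droite fini. Comme $f \ge 0$ est intégrable, les ensembles de sur-niveau $\{ f > s\}$ sont de mesure finie pour tout $s>0$, et de même pour $f^*$.

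En appliquant la représentation par tranche $f = \int_0^{+\infty} 1_{\{f>s\}} \dd s$ et le théorème de Tonelli (tous les intégrandes étant positifs, aucune hypothèse d'intégrabilité sur $w$ n'est requise), on obtient
\[
\drint f\, w \ddxv = \int_0^{+\infty} \left( \int_{\{f>s\}} w \ddxv \right) \dd s,
\]
et l'identité analogue pour $f^*$. Puisque par définition du réarrangement symétrique $\{ f^* > s\} = \{ f > s\}^*$ (à un ensemble de mesure nulle près), il suffit alors d'établir l'inégalité au niveau des ensembles: pour tout $A \subset \R^6$ mesurable de mesure finie,
\[
\int_{A^*} w \ddxv \le \int_A w \ddxv.
\]

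Pour cette dernière, j'écrirais $A^* = \{ w < R\}$ la boule centrée de même volume que $A$, et je retrancherais la partie commune $A \cap A^*$, de sorte qu'il ne reste qu'à comparer $\int_{A \setminus A^*} w$ et $\int_{A^* \setminus A} w$. L'égalité $|A| = |A^*|$ entraîne $|A \setminus A^*| = |A^* \setminus A|$. Or sur $A \setminus A^*$ les points sont hors de la boule, donc $w \ge R$, tandis que sur $A^* \setminus A \subset A^*$ on a $w \le R$; on en déduit
\[
\int_{A \setminus A^*} w \ddxv \ge R\, |A \setminus A^*| = R\, |A^* \setminus A| \ge \int_{A^* \setminus A} w \ddxv,
\]
ce qui, après avoir rajouté l'intégrale sur $A \cap A^*$, donne exactement $\int_{A^*} w \le \int_A w$ et conclut.

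La difficulté principale ne réside pas dans l'argument géométrique, qui est immédiat, mais dans la justification des manipulations de théorie de la mesure: vérifier que $f^*$ est bien défini et mesurable, que $\{f^* > s\} = \{f > s\}^*$ pour presque tout $s$, et que l'échange d'intégrales par Tonelli est licite. Ces points sont standards dès que $f \ge 0$ est intégrable, précisément parce que tous les intégrandes en jeu sont positifs, ce qui rend inutile toute hypothèse d'intégrabilité sur le poids $w$.
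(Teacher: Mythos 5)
Your proof is correct, but its core argument is genuinely different from the paper's. Both proofs begin identically, slicing $f$ by the layer-cake formula so that the claim reduces to a statement about level sets. From there the paper takes an indirect route: it first establishes the intersection inequality $\drint 1_A 1_B \ddxv \le \drint 1_{A^*} 1_{B^*} \ddxv$, applies it with $B = \{ |(x,v)|_{\R^6} \le s \}$ (a ball, hence equal to its own rearrangement) to get $\drint f \, 1_B \ddxv \le \drint f^* \, 1_B \ddxv$, then uses mass conservation $\drint f \ddxv = \drint f^* \ddxv$ to flip this into an inequality on the complements $\{ |(x,v)|_{\R^6} > s \}$, and finally integrates over $s$, i.e.\ slices the weight as well. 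You instead reduce to the single set-level inequality $\int_{A^*} w \ddxv \le \int_A w \ddxv$ and prove it directly by the bathtub principle: writing $A^* = \{ w < R \}$, the symmetric differences $A \setminus A^*$ and $A^* \setminus A$ have equal (finite) measure, with $w \ge R$ on the former and $w \le R$ on the latter. Your route is more economical --- it slices only $f$, needs no complementation step --- and it makes transparent that the only property of the weight used is that its sublevel sets are exactly the rearranged sets $A^*$; for that reason it transfers verbatim to the paper's next lemma, the rearrangement with respect to the microscopic energy $E_\phi$, whose sublevel sets then play the role of the balls. The paper's formulation serves the same purpose (its proof of the $E_\phi$ lemma repeats the same three steps, with the shift by $\min E_\phi$ handling the fact that $E_\phi$ is negative), and it isolates the standard Hardy--Littlewood intersection inequality as a reusable building block. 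Note that both arguments invoke the integrability of $f$ at the analogous spot: you need it so that $|\{ f > s \}| < \infty$ and the two symmetric differences have equal finite measure, while the paper needs it to subtract and pass to complements.
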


\begin{proof}
  La preuve est tr\`es simple, nous la rappelons pour \'eclairer la
  suite. Pour deux ensembles mesurables $A, B \subset \R^6$ de volume
  fini avec $|A| \le |B|$ (l'autre cas \'etant sym\'etrique), on a 
\[ 
\drint 1_A 1_B \ddxv = | A \cap B | \le | A | = | A^* | = | A^* \cap
B^* | = \drint 1_A ^* 1_B ^* \ddxv.
\]
Or pour $s \ge 0$ donn\'e on a $1_{|(x,v)|_{\R^6} \le s} ^* =
1_{|(x,v)|_{\R^6} \le s}$, et on d\'eduit en utilisant la
pr\'ec\'edente in\'egalit\'e et en int\'egrant par tranche
\[
\drint f 1_{|(x,v)|_{\R^6} \le s} \ddxv \le \drint f^*
1_{|(x,v)|_{\R^6} \le s} \ddxv. 
\]
Puisque $\drint f \ddxv = \drint f^* \ddxv$ on en d\'eduit 
\[
\drint f^* 1_{|(x,v)|_{\R^6} > s} \ddxv \le \drint f
1_{|(x,v)|_{\R^6} > s} \ddxv. 
\]
En int\'egrant finalement selon $s \in [0,+\infty[$, on en d\'eduit le
r\'esultat. 
\end{proof}

On introduit maintenant de mani\`ere similaire le
\emph{r\'earrangement selon l'\'energie microscopique}
\[
E_\phi := \left( \frac{|v|^2}{2} + \phi(x) \right) 
\]
d'un potentiel donn\'e $\phi$ sur $\R^3$ de la mani\`ere
suivante. \'Etant donn\'e un ensemble $A \subset \R^6$ mesurable on
d\'efinit $A^{*\phi}$ son r\'earrangement selon l'\'energie
microscopique $E_\phi$ comme \'etant la \og boule d'\'energie\fg{}
ouverte
\[
A^{*\phi} = \{ (x,v) \ | \ E_\phi(x,v) < E_A \} 
\]
avec $E_A$ choisi tel que $|A^{*\phi}| = |A|$. Il est facile de voir
que
\[
E \to | \{ (x,v) \ | \ E_\phi(x,v) < E \}  |
\]
est une bijection de $[\min E_\phi,0[$ sur $\R_+$. \'Etant donn\'ee
une fonction $f \ge 0$ int\'egrable \` a support compact sur $\R^6$,
on d\'efinit alors $f^{*\phi}$ son r\'earrangement selon l'\'energie
microscopique $E_\phi$ comme \'etant la fonction positive sur $\R^6$
dont les ensembles de niveau sup\'erieur sont obtenus par
r\'earrangement selon l'\'energie microscopique des ensembles de
niveau sup\'erieur correspondants de $f$:
\[
f^{*\phi}(x,v) = \int_0 ^{+\infty} 1_{\{ f \ge s \}} ^{*\phi} \dd s \quad \mbox{
  avec } \quad 1_A ^{*\phi} = 1_{A^{*\phi}}.
\]
La fonction $f^{*\phi}$ est alors une fonction de $E_\phi$, \`a support
compact, d\'ecroissante en l'\'energie microscopique $E_\phi$, et
\'equimesurable \`a $f$. On a la propri\'et\'e suivante qui rappelle
le lemme \'el\'ementaire ci-dessus, et dont nous nous servirons par la
suite:
\begin{lemm}
  Pour $f \ge 0$ int\'egrable \`a support compact sur $\R^6$ on a
\[
\drint f^{*\phi}(x,v) E_\phi(x,v) \ddxv \le \drint f(x,v) E_\phi(x,v) \ddxv.
\]
\end{lemm}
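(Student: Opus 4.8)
The plan is to follow, essentially line by line, the proof of the preceding elementary lemma, simply replacing the symmetric rearrangement by the energy rearrangement $\cdot^{*\phi}$ and the balls $\{|(x,v)|_{\R^6}\le s\}$ by the energy balls $\{E_\phi<E\}$. The one structural input that makes the whole argument go through is that the energy balls are totally ordered by inclusion: since $E\mapsto |\{E_\phi<E\}|$ is an increasing bijection from $[\min E_\phi,0[$ onto $\R_+$, two energy balls are nested according to their volumes, and each energy ball coincides with its own energy rearrangement --- exactly the role played by the identity $\{|(x,v)|_{\R^6}\le s\}^{*}=\{|(x,v)|_{\R^6}\le s\}$ in the previous proof.

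First I would establish the set-level inequality $|A\cap B|\le |A^{*\phi}\cap B^{*\phi}|$ for any two measurable sets of finite measure, which is the analogue of the bilinear estimate $\drint 1_A 1_B \ddxv \le \drint 1_A^{*\phi} 1_B^{*\phi}\ddxv$ used before. Assuming $|A|\le |B|$ (the reverse case being symmetric), the balls are nested, $A^{*\phi}\subset B^{*\phi}$, whence $|A^{*\phi}\cap B^{*\phi}| = |A^{*\phi}| = |A|\ge |A\cap B|$. Applying this with $B=\{E_\phi\le E\}$, which is its own rearrangement, and $A=\{f\ge s\}$, then integrating over $s\in\R_+$ and invoking the layer-cake formula defining $f^{*\phi}$, I obtain, for every energy level $E$,
\[
\drint f\,1_{E_\phi\le E}\ddxv \le \drint f^{*\phi}\,1_{E_\phi\le E}\ddxv.
\]
Since $f$ and $f^{*\phi}$ are equimeasurable they carry the same mass, so passing to the complementary region yields $\drint f^{*\phi}\,1_{E_\phi> E}\ddxv \le \drint f\,1_{E_\phi> E}\ddxv$ for every $E$.

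It then remains to integrate this one-parameter family against $\dd E$ so as to reconstruct the weight $E_\phi$ itself, and this is where I expect the only genuine difficulty, the single real departure from the previous lemma: the weight $E_\phi$ is \emph{not} nonnegative, so the naive layer-cake identity $|(x,v)|_{\R^6}=\int_0^{\infty}1_{\,\cdot\,>s}\,\dd s$ exploited there is unavailable. The fix is to anchor at the finite lower bound $E_{\min}:=\min_{\R^6}E_\phi=\min_x\phi(x)$ (finite since $\phi$ is bounded below) and write $E_\phi=E_{\min}+\int_{E_{\min}}^{+\infty}1_{E_\phi>t}\,\dd t$. With $M:=\drint f\ddxv=\drint f^{*\phi}\ddxv$, Tonelli's theorem then gives
\[
\drint f^{*\phi}E_\phi\ddxv = E_{\min}\,M+\int_{E_{\min}}^{+\infty}\Big(\drint f^{*\phi}\,1_{E_\phi>t}\ddxv\Big)\dd t,
\]
and identically for $f$, so the pointwise-in-$t$ inequality of the previous paragraph integrates directly to the desired conclusion. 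The remaining care concerns the finiteness of both sides: $f$ has compact support, on which $E_\phi$ is bounded, while $\mathrm{supp}\,f^{*\phi}$ lies in an energy ball $\{E_\phi<E_0\}$ with $E_0<0$, on which $E_\phi$ is bounded above by $E_0$ and below by $E_{\min}$; both weighted integrals are therefore absolutely convergent and the manipulations above are legitimate.
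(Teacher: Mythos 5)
Your argument is, in substance, the paper's own proof: the same bilinear set inequality $|A\cap B|\le |A^{*\phi}\cap B^{*\phi}|$ obtained from the nesting of energy balls, the same slicing of $f$ by layer-cake to get $\drint f\,1_{E_\phi\le E}\ddxv \le \drint f^{*\phi}1_{E_\phi\le E}\ddxv$, the same passage to the complement using equality of masses, and a final integration in the energy parameter. The one structural difference is where that last integration runs, and it is exactly there that your write-up has an unjustified step. You assert the slice inequality \emph{for every energy level} $E$ by taking $B=\{E_\phi\le E\}$ in your set-level lemma, claiming this set is its own rearrangement. But by the very bijection $E\mapsto |\{E_\phi<E\}|$ from $[\min E_\phi,0[$ onto $\R_+$ that you invoke, the energy sublevel sets of \emph{finite} measure are precisely those with $E<0$; for $E\ge 0$ the set $\{E_\phi\le E\}$ has infinite Lebesgue measure, its energy rearrangement is not defined, and your lemma (stated, correctly, for sets of finite measure) does not apply to it. Your Tonelli integration over $t\in[E_{\min},+\infty)$ genuinely uses these slices: the right-hand side $\drint f\,1_{E_\phi>t}\ddxv$ need not vanish for $t\ge 0$, since nothing prevents $f$ from charging the region of nonnegative energy.

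The gap is closed in one line by an observation you already record, but only invoke for finiteness: $f^{*\phi}$ vanishes outside an energy ball $\{E_\phi<E_0\}$ with $E_0<0$, so for $t\ge 0$ one has $\drint f^{*\phi}1_{E_\phi>t}\ddxv=0$ and the slice inequality is trivial. With that patch your proof is complete, and its endgame is in fact slightly tidier than the paper's: the paper integrates $s$ only over $[\min E_\phi,0[$, which keeps every set of finite measure but reconstructs the weight $\min(E_\phi,0)-\min E_\phi$ rather than $E_\phi-\min E_\phi$, so it too must tacitly use that $f^{*\phi}$ is supported in $\{E_\phi<0\}$ (together with $\min(E_\phi,0)\le E_\phi$) to reach the stated conclusion; your decomposition $E_\phi=E_{\min}+\int_{E_{\min}}^{+\infty}1_{E_\phi>t}\,\dd t$ recovers the weight exactly, at the price of the trivial slices above.
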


\begin{proof}
En raisonnant comme pr\'ec\'edemment on a 
\[ 
\drint 1_A 1_B \ddxv \le \drint 1_A ^{*\phi} 1_B ^{*\phi} \ddxv.
\]
D'o\`u, pour $s \in [-\min E_\phi,0[$, puisque $1_{E_\phi(x,v) \le
  s} ^{*\phi} = 1_{E_\phi(x,v) \le s}$, en int\'egrant par tranche
\[
\drint f 1_{E_\phi(x,v)  \le s} \ddxv \le \drint f^{*\phi}
1_{E_\phi(x,v) \le s} \ddxv
\]
et puisque $\drint f \ddxv = \drint f^{*\phi} \ddxv$ on en d\'eduit
\[
\drint f^{*\phi} 1_{E_\phi(x,v) > s} \ddxv \le \drint f
1_{E_\phi(x,v)>s} \ddxv. 
\]
En int\'egrant finalement selon $s \in [-\min E_\phi,0[$, on obtient
\[
\drint f^{*\phi} \left( E_\phi(x,v) - \min E_\phi\right) \ddxv \le
\drint f 
\left( E_\phi(x,v) - \min E_\phi \right) \ddxv
\]
d'o\`u le r\'esultat.
\end{proof}

On va par la suite utiliser le r\'earrangement de $f$ \emph{selon
  l'\'energie microscopique cr\'e\'ee par la fonction $f$
  elle-m\^eme}, que nous noterons $\hat f = f^{*\phi_f}$ avec, comme
pr\'ec\'edemment, $\phi_f = - (1/(4 \pi |x|)) \ast \rho_f$. On voit que
l'on obtient ainsi une op\'eration de r\'earrangement $f \to \hat f$
\emph{tr\`es fortement non-lin\'eaire}. Remarquons imm\'ediatement que
la solution stationnaire sph\'erique est un point fixe de ce
r\'earrangement non-lin\'eaire: $\hat f^0 = (f^0)^{*\phi_{f^0}} =
f^0$.

\subsubsection{Monotonie du hamiltonien et hamiltonien r\'eduit}
\label{sec:monot-du-hamilt}

On d\'efinit la fonctionnelle 
\[
\cJ_{f^*}(\phi) := \cH(f^{*\phi}) + \frac12 \| \Dx \phi - \Dx \phi_{f*\phi}
\|_{L^2(\R^3)} ^2
\]
et on montre la propri\'et\'e de monotonie suivante. 
\begin{prop}
  \label{prop:mon-ham}
  Si l'on consid\`ere $f \in \cE$ et $\hat f = f^{*\phi_f}$, alors
  \[
  \cH(f) \ge \cJ_{f^*} (\phi_f) \ge \cH(\hat f)
\]
avec \'egalit\'e si et seulement si $f = \hat f$. 
\end{prop}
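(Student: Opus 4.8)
The plan is to reduce the entire statement to the elementary rearrangement lemma proved just above, via a purely algebraic reformulation of $\cJ_{f^*}$. The first step is to observe that for \emph{any} admissible potential $\phi$ one has the identity
\[
\cJ_{f^*}(\phi) = \drint E_\phi \, f^{*\phi} \ddxv + \frac12 \rint |\Dx \phi|^2 \dd x, \qquad E_\phi := \frac{|v|^2}{2} + \phi(x).
\]
To obtain this I would expand $\cH(f^{*\phi}) = \drint \frac{|v|^2}{2} f^{*\phi}\ddxv - \frac12\rint|\Dx\phi_{f^{*\phi}}|^2\dd x$ using \eqref{eq:HamMacro}, expand the square $\frac12\|\Dx\phi - \Dx\phi_{f^{*\phi}}\|_{L^2}^2$, and note that the two terms $\pm\frac12\rint|\Dx\phi_{f^{*\phi}}|^2$ cancel. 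The only nonformal step is the cross term: integrating by parts and using the Poisson equation $\Delta_x\phi_{f^{*\phi}} = \rho_{f^{*\phi}}$ gives $-\rint\Dx\phi\cdot\Dx\phi_{f^{*\phi}}\dd x = \rint \phi\,\rho_{f^{*\phi}}\dd x = \drint \phi\,f^{*\phi}\ddxv$, which combines with the kinetic term into $\drint E_\phi f^{*\phi}\ddxv$.

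Once this formula is in hand the two inequalities are immediate. For the lower bound I use the definition directly: since $\cJ_{f^*}(\phi_f) = \cH(\hat f) + \frac12\|\Dx\phi_f - \Dx\phi_{\hat f}\|_{L^2}^2$ and the square term is nonnegative, $\cJ_{f^*}(\phi_f)\ge\cH(\hat f)$. For the upper bound I evaluate the reformulation at $\phi = \phi_f$ and invoke the rearrangement lemma, giving $\drint E_{\phi_f}\hat f\ddxv = \drint E_{\phi_f}f^{*\phi_f}\ddxv \le \drint E_{\phi_f}f\ddxv$; it then remains to identify the right-hand side, for which I use $\drint \phi_f f\ddxv = \rint\phi_f\rho_f\dd x = -\rint|\Dx\phi_f|^2\dd x$ (Poisson and integration by parts) to obtain $\drint E_{\phi_f}f\ddxv + \frac12\rint|\Dx\phi_f|^2\dd x = \cH(f)$. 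Hence $\cJ_{f^*}(\phi_f)\le\cH(f)$.

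It remains to treat equality. The reverse implication is trivial: $f=\hat f$ makes the square term vanish and the rearrangement exact. For the direct implication, suppose $\cH(f)=\cH(\hat f)$; then both inequalities collapse to equalities, and in particular the rearrangement inequality $\drint E_{\phi_f}f^{*\phi_f}\ddxv = \drint E_{\phi_f}f\ddxv$ is saturated. This equality case is where I expect all the real difficulty to lie. I would handle it by a layer-cake decomposition, reducing to the bathtub principle on each super-level set $A_s = \{f > s\}$: for $|A| = |A^{*\phi_f}|$ one has $\int_{A^{*\phi_f}}E_{\phi_f}\le\int_A E_{\phi_f}$, with equality if and only if $A = A^{*\phi_f}$ up to a null set. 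Crucially, this uniqueness is available precisely because the distribution of $E_{\phi_f}$ is nonatomic: the map $E\mapsto|\{E_{\phi_f}<E\}|$ is a strictly increasing bijection of $[\min E_{\phi_f},0)$ onto $\R_+$, so $|\{E_{\phi_f}=c\}|=0$ for every $c$, which rules out the plateaus that would otherwise break uniqueness. Saturation therefore forces $A_s = A_s^{*\phi_f}$ for a.e. $s$, whence $f = f^{*\phi_f} = \hat f$.

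In summary, the inequalities are purely formal consequences of the reformulation and the rearrangement lemma, and the only genuinely delicate part is the measure-theoretic equality case just described; one should also check the mild integrability bookkeeping (the integral $\drint E_{\phi_f}f\ddxv$ is finite since $|v|^2f\in L^1$ and $\drint\phi_f f\ddxv$ is controlled through the interpolation estimate of Proposition~\ref{prop:interpol-clef}, and both $f$ and $\hat f$ have bounded energy support because $\{E_{\phi_f}<E_A\}$ is bounded for $E_A<0$).
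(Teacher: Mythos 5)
Your proposal is correct and follows essentially the same route as the paper: your reformulation $\cJ_{f^*}(\phi) = \drint E_\phi \, f^{*\phi}\ddxv + \frac12\rint|\Dx\phi|^2\dd x$, evaluated at $\phi=\phi_f$ and compared with $\cH(f) = \drint E_{\phi_f}f\ddxv + \frac12\rint|\Dx\phi_f|^2\dd x$, is precisely the paper's identity $\cH(f) = \cJ_{f^*}(\phi_f) + \drint E_{\phi_f}(f-\hat f)\ddxv$, after which both arguments conclude by the same rearrangement lemma together with the nonnegativity of $\frac12\|\Dx\phi_f-\Dx\phi_{\hat f}\|_{L^2(\R^3)}^2$. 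Your bathtub-principle treatment of the equality case, resting on the nonatomicity of the distribution of $E_{\phi_f}$, is a correct fleshing-out of what the paper compresses into the single sentence that one studies the equality case of the rearrangement inequality.
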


\begin{proof}[\'Ebauche de preuve]
  La preuve repose sur le lemme pr\'ec\'edent. Par un calcul que nous
  avons d\'ej\`a fait 
\[
\cH(f) = \cH(g) + \frac12 \| \Dx \phi_f - \Dx \phi_g \|_{L^2(\R^3)} ^2
+ \drint \left( \frac{|v|^2}{2} + \phi_f(x) \right) (f-g) \ddxv 
\]
pour deux fonctions $f,g \in \cE$, d'o\`u avec $g = \hat f$: 
\[
\cH(f) = \cJ_{f^*}(\phi_f) + \drint \left( \frac{|v|^2}{2} + \phi_f(x)
\right) (f-\hat f) \ddxv
\]
et l'on conclut gr\^ace au lemme pr\'ec\'edent appliqu\'e \`a 
\[
\drint \left( \frac{|v|^2}{2} + \phi_f(x)
\right) (f-\hat f) \ddxv \ge 0.
\]
Le cas d'\'egalit\'e se traite en \'etudiant le cas d'\'egalit\'e dans
l'in\'egalit\'e de monotonie du r\'earrangement selon l'\'energie
microscopique. 
\end{proof}

Il s'av\`ere que la diff\'erence $\cJ_{f^*}- \cJ_{f^0}$ est facilement
contr\^olable par des normes sans d\'eriv\'ee:
\[
\cJ_{f^*} (\phi) - \cJ_{f^0}(\phi) \ge - \| \phi_f \|_{L^\infty(\R^3)}
\| f^* - (f^0)^* \|_{L^1(\R^6)},
\]
et l'on ainsi peut se contenter d'\'etudier la coercitivit\'e de la
fonctionnelle $\cJ_{f^0}$. Nous noterons $\cJ(\phi) :=
\cJ_{f^0}(\phi)$ et nous appellerons cette fonctionnelle
\emph{hamiltonien r\'eduit}; elle n'agit que sur le potentiel $\phi$.

\subsubsection{In\'egalit\'e de coercitivit\'e d'Antonov
  g\'en\'eralis\'ee pour le hamiltonien r\'eduit}
\label{sec:strict-convexite-du}

On va maintenant \'etudier les propri\'et\'es de convexit\'e de $\cJ$
au voisinage de $\phi_{f^0}$, et ainsi montrer que $\phi_{f^0}$ est un
\emph{minimum local} de $\cJ$. On d\'efinit un espace de potentiels
admissibles
\[
\cX := \left\{ \phi \in C^0(\R^3) \quad | \quad \phi \le 0, \ \lim_{\infty}
  \phi = 0, \ \Dx \phi \in L^2(\R^3), \ \inf_{x \in \R^3}
  (1+|x|)|\phi(x)| >0 \right\}.
\]
On peut alors montrer la proposition suivante.
\begin{prop}
  \label{prop:coercJ}
  Il existe des constantes $c_0$, $\delta_0 >0$ et une application
  continue $\phi \to z_\phi$ de $\dot H^1(\R^3)$ (l'espace de Sobolev
  homog\`ene) dans $\R^3$ telles que pour $\phi \in \cX$ tel que
\[
\inf_{z \in \R^3} \Big[ \| \phi - \phi_{f^0}(\cdot -z)
\|_{L^\infty(\R^3)} + \| \Dx \phi - \Dx \phi_{f^0}(\cdot - z)
\|_{L^2(\R^3)} \Big] < \delta_0
\]
alors 
\[
\cJ(\phi) - \cJ(\phi_{f^0}) \ge c_0 \| \Dx \phi - \Dx \phi_{f^0}(\cdot - z_\phi)
\|_{L^2(\R^3)}.
\]
\end{prop}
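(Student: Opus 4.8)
The plan is to exploit the \emph{exact} structure of the reduced hamiltonian $\cJ$, which is precisely what makes the reduced problem tractable. Write $g_\phi := (f^0)^{*\phi}$ for the rearrangement of $f^0$ along the microscopic energy $E_\phi = |v|^2/2 + \phi$. The monotonicity lemma underlying Proposition~\ref{prop:mon-ham} shows that $g_\phi$ minimizes $g \mapsto \drint E_\phi\, g \ddxv$ over the functions equimeasurable to $f^0$, so that $\phi \mapsto \drint E_\phi\, g_\phi \ddxv$ is an infimum of functions affine in $\phi$, hence concave, and $\phi_{f^0}$ is a critical point of $\cJ$ (the contributions of the transport term and of $\tfrac12\|\Dx\phi\|_{L^2}^2$ to the first variation are $\pm\rho_{f^0}$, which cancel). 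A completion of the square, combined with the energy-difference identity~\eqref{eq:diffHam}, then gives the exact identity
\[
\cJ(\phi) - \cJ(\phi_{f^0}) = \left[\cH(g_\phi) - \cH(f^0)\right] + \frac12\left\|\Dx\phi - \Dx\phi_{g_\phi}\right\|_{L^2(\R^3)}^2 ,
\]
in which the entire nonlinearity is absorbed into the rearrangement $g_\phi$, so that \emph{no Taylor remainder survives}. The second term is manifestly nonnegative and controls $\phi - \phi_{g_\phi}$; the heart of the matter is a coercive lower bound on the first term.

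To bound $\cH(g_\phi) - \cH(f^0)$ from below I would use that $g_\phi$ is equimeasurable to $f^0$, so every Casimir is frozen and $\cH(g_\phi) - \cH(f^0) = \cH_\cC(g_\phi) - \cH_\cC(f^0)$ for the Casimir $\cC$ adapted to $f^0$, namely $\cC'(f^0) = -E_{f^0}$, equivalently $\cC''(f^0) = -1/F'(E) > 0$ (here $F'<0$ is used). Rather than Taylor expanding --- which would reintroduce exactly the third-order terms that obstruct the direct linearization approach --- I would invoke the convexity of $\cC$ through the elementary convex-duality inequality of Guo and Lin, bounding $\cH_\cC(g_\phi) - \cH_\cC(f^0)$ from below by a nonnegative quadratic expression, morally the Antonov free energy $\cF(g_\phi - f^0)$, with \emph{no higher-order remainder}. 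Since $g_\phi$ is a decreasing function of $E_\phi$ and equimeasurable to $f^0$, the part of $g_\phi - f^0$ not suppressed by the equimeasurability constraint is of the dynamically accessible form $\{h, f^0\}$, and Antonov's coercivity (Proposition~\ref{prop:Antonov-simple}) yields $\cF(g_\phi - f^0) \ge 0$.

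It remains to upgrade this nonnegativity to coercivity modulo translations. Reducing the self-consistency relation to the potential alone produces the Schr\"odinger operator $\cA = \Delta_x + V_{f^0}$, with $V_{f^0} = -\rint F'(E)\dd v$, already met in the proof of Proposition~\ref{prop:weinsteinlin}, whose kernel is exactly the translation space $\mbox{Vect}\{\partial_{x_1}\phi_{f^0}, \partial_{x_2}\phi_{f^0}, \partial_{x_3}\phi_{f^0}\}$. This degeneracy is the reason the statement quotients by translations: I would define $z_\phi$ as the minimizer of $z \mapsto \|\Dx\phi - \Dx\phi_{f^0}(\cdot - z)\|_{L^2}$, which is continuous and, near the orbit of $\phi_{f^0}$, unique and smooth by the implicit function theorem (the second differential being nondegenerate transversally to the translations), thereby removing the zero modes. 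On the orthogonal complement the odd sector is controlled by Antonov as above, while the radial even sector --- precisely the part invisible to Antonov's inequality --- reduces to a Poincar\'e-type inequality in the energy variable; a contradiction argument along an $\dot H^1$-normalized minimizing sequence, as in the proof of Proposition~\ref{prop:weinsteinlin}, then gives
\[
\cH(g_\phi) - \cH(f^0) \ge c\,\left\|\Dx\phi_{g_\phi} - \Dx\phi_{f^0}(\cdot - z_\phi)\right\|_{L^2(\R^3)}^2 .
\]

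Finally I would combine the two terms of the exact identity: the triangle inequality together with $a^2 + b^2 \ge \tfrac12(a+b)^2$ converts the bound above and the term $\tfrac12\|\Dx\phi - \Dx\phi_{g_\phi}\|_{L^2}^2$ into the claimed coercive control of $\cJ(\phi) - \cJ(\phi_{f^0})$ by $\|\Dx\phi - \Dx\phi_{f^0}(\cdot - z_\phi)\|_{L^2(\R^3)}$, with $\delta_0$ fixed small enough for the implicit definition of $z_\phi$ and the contradiction argument to be valid. The main obstacle is the coercivity of the first term: one must simultaneously exclude the translation zero modes, control the radial even directions that Antonov's inequality does not see --- which is exactly where the reduction to a Poincar\'e inequality enters --- and, in the contradiction argument, recover strong compactness of the minimizing sequence despite the loss of compactness of the Sobolev embeddings on $\R^3$, forcing a concentration-compactness analysis.
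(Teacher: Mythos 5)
Your overall skeleton (modulation of the translations by an implicit function theorem, kernel of the Schr\"odinger operator $\cA$ equal to $\mbox{Vect}\{\partial_{x_1}\phi_{f^0},\partial_{x_2}\phi_{f^0},\partial_{x_3}\phi_{f^0}\}$, Antonov coercivity, a Hardy/Poincar\'e-type inequality, a contradiction--compactness argument) is the same as in Steps 2 and 3 of the paper's proof, and your ``exact identity'' is correct --- it is nothing more than the definition of $\cJ$ together with $(f^0)^{*\phi_{f^0}}=f^0$. The genuine gap is your central claim that equimeasurability plus the Guo--Lin convex duality inequality yields a coercive, remainder-free lower bound on $\cH(g_\phi)-\cH(f^0)$, ``rather than Taylor expanding''. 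This does not work as stated. Since $g_\phi$ is equimeasurable to $f^0$, every Casimir cancels exactly, and identity \eqref{eq:diffHam} gives
\[
\cH(g_\phi)-\cH(f^0)=\drint E_{f^0}\,(g_\phi-f^0)\ddxv-\tfrac12\,\|\Dx\phi_{g_\phi}-\Dx\phi_{f^0}\|_{L^2(\R^3)}^2,
\]
where the first term is nonnegative (bathtub principle) and the second is exactly the negative gravitational term you must absorb; convexity of $\cC$ adds nothing beyond this, and the refined Guo--Lin inequality carries the negative term $-\rint|\cP\,\phi_{g_\phi-f^0}|^2\dd x$ which Guo and Lin could only control through finite-rank approximations $\cP_\ell$, and only for spherically symmetric perturbations. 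Showing that the nonnegative term dominates the negative one is an Antonov-type statement whose proof requires knowing the \emph{structure} of $g_\phi-f^0$; your assertion that ``the part of $g_\phi-f^0$ not suppressed by the equimeasurability constraint is of the dynamically accessible form $\{h,f^0\}$'' is precisely this structural fact, and it holds only at leading order, through the linearization $g_\phi-f^0\approx F'(E)\,(1-\cP)(\phi-\phi_{f^0})$ of the rearrangement map. Establishing that linearization with a controlled remainder is exactly the paper's Step 1, the Taylor expansion of $\cJ$ whose Hessian is $D^2\cJ(\phi_{f^0})(h,h)=\rint|\Dx h|^2\dd x-\drint|F'(E)|\,[h-\cP h]^2\ddxv$; the differentiability of the strongly nonlinear map $\phi\mapsto(f^0)^{*\phi}$ is one of the delicate points of \cite{LMR9} and cannot be bypassed by convexity. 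Without it, your intermediate coercivity bound on $\cH(g_\phi)-\cH(f^0)$ is an assertion, not a proof. Note also that Proposition~\ref{prop:Antonov-simple} requires a spherically symmetric perturbation, so for non-radial $\phi$ you cannot invoke it on $g_\phi-f^0$ before decomposing into spherical harmonics --- which, in the paper, happens at the level of the Hessian operator $\cL$, i.e.\ \emph{after} the expansion you propose to skip.

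A secondary confusion: you assign ``the odd sector'' to Antonov and ``the radial even sector'' to the Poincar\'e inequality. The perturbation here is a potential $h(x)$, a function of $x$ alone, so parity in $v$ is not the relevant splitting; the paper decomposes $h$ into its radial part and its orthogonal complement in $x$. The non-radial component is the easier one, handled by the Schr\"odinger operator $\cA$ already met in the proof of Proposition~\ref{prop:weinsteinlin}; the radial component is exactly where Antonov's coercivity enters (via the Guo--Lin observation that $F'(E)(1-\cP)h$ is of the form $\{g,f^0\}$ with $g$ odd, as in \cite{GL}), and the H\"ormander-style Hardy/Poincar\'e argument of Lemou, M\'ehats and Rapha\"el is an \emph{alternative proof of that same radial inequality}, not the treatment of a sector that Antonov's inequality misses.
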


\begin{proof}[\'Ebauche de preuve]
On d\'ecompose la d\'emarche en plusieurs \'etapes.

\sk
\noindent
\'Etape 1: \emph{D\'eveloppement de Taylor.} La premi\`ere phase
calculatoire est d'\'ecrire le d\'eveloppement de Taylor \`a l'ordre
$2$ de $\cJ$ avec un reste contr\^ol\'e explicitement: 
\[
\cJ(\phi) - \cJ(\phi_{f^0}) = \frac12 D^2 \cJ(\phi_{f^0})
(\phi-\phi_{f^0},\phi-\phi_{f^0}) + o\left(\| \phi -
  \phi_{f^0}\|_{L^\infty(\R^3)}\right) \| \Dx \phi - \Dx \phi_{f^0} 
\|_{L^2(\R^3)} ^2
\]
avec le terme d'ordre $1$ qui s'annule et
\[
D^2 \cJ(\phi_{f^0})(h,h) := \rint | \Dx h |^2 \dd x - \drint |F'(E)|
\big[ h(x) - (\cP h) (x,v) \big]^2 \ddxv 
\]
o\`u l'on rappelle que, sans autre pr\'ecision, $E=E_{\phi_{f^0}}=
(|v|^2/2+\phi_{f^0}(x))$, et la projection~$\cP$ est d\'efinie par 
\[
(\cP h)(x,v) := \frac{\rint \left( \frac{|v|^2}{2} + \phi_{f^0}(x) -
    \phi_{f^0}(y) \right)^{1/2} _+ h(y) \dd y}{\rint \left( \frac{|v|^2}{2}
    + \phi_{f^0}(x) - \phi_{f^0}(y) \right)^{1/2} _+ \dd y}.
\]
C'est un op\'erateur de projection sur les fonctions de $E$
uniquement. Dans le cas radial on retrouve ainsi l'op\'erateur de
projection sur le noyau de l'op\'erateur $v \cdot \nabla_x - \nabla_x
\phi_{f^0} \cdot \nabla_v$ que nous avons d\'ej\`a rencontr\'e dans le
travail \cite{GL}. 

\sk 
\noindent 
\'Etape 2: \emph{In\'egalit\'e d'Antonov g\'en\'eralis\'ee}. 
Si l'on d\'efinit 
\[
\cL h = -\Delta_x h - \rint |F'(E)| (1-\cP) \dd v
\] 
on obtient
\[
\langle \cL h, h \rangle_{L^2(\R^3)} = D^2 \cJ(\phi_{f^0})(h,h)
\]
et l'on ram\`ene le probl\`eme \`a l'\'etude de l'op\'erateur
$\cL$; on a alors la proposition suivante. 
\begin{prop}
  L'op\'erateur $\cL$ est positif, c'est une perturbation compacte du
  laplacien sur $\dot H^1(\R^3)$, son noyau est donn\'e par
\[
\mbox{{\em Ker}}(\cL) = 
\mbox{{\em Vect}}\left\{ \partial_{x_1}
  \phi_{f^0}, \partial_{x_2} \phi_{f^0}, \partial_{x_3} \phi_{f^0} \right\}
\]
et on a donc 
\[
\fa h \in \dot H^1(\R^3), \quad \langle \cL h, h \rangle_{L^2(\R^3)}
\ge c_0 \| \Dx h \|_{L^2(\R^3)} ^2 - \frac{1}{c_0} \sum_{i=1} ^3
\left( \rint h \Delta_x (\partial_{x_i} \phi_{f^0}) \dd x \right)^2
\]
pour une certaine constante $c_0>0$. 
\end{prop}

On d\'ecompose $h$ en partie radiale et compl\'ementaire orthogonal 
\[
h = h_0 + h_1, \quad h_0 \in \dot H^1_{rad}(\R^3), \ h_1 \in
\left(\dot H^1_{rad}(\R^3)\right)^\bot.
\]
La positivit\'e selon la composante $h_1$ est plus simple \`a traiter
car $\Pi h_1=0$ et son \'etude se ram\`ene donc \`a celle de
l'op\'erateur de Schr\"odinger $\cA$ que nous avons d\'ej\`a
\'etudi\'e plus haut dans la preuve de la
proposition~\ref{prop:weinsteinlin}. Le noyau $\mbox{Ker}(\cL)$ de
l'\'enonc\'e s'en d\'eduit en particulier.

Sur la composante radiale $h_0$ on a l'in\'egalit\'e
\[
\fa h \in \dot H^1_{rad}(\R^3), \ h \not =0, \quad 
\langle \cL h, h \rangle_{L^2(\R^3)} >0. 
\]
Cette propri\'et\'e peut se d\'emontrer comme dans la preuve de
Guo et Lin~\cite{GL} que nous avons discut\'ee pr\'ec\'edemment \`a la
sous-section~\ref{sec:appr-non-vari}, en montrant qu'elle se r\'eduit
\`a l'in\'egalit\'e de coercitivit\'e d'Antonov d\'emontr\'ee \`a la
proposition~\ref{prop:Antonov-simple}.

Lemou, M\'ehats et Rapha\"el proposent une autre preuve int\'eressante
de cette propri\'et\'e et de la proposition~\ref{prop:Antonov-simple},
en faisant le parall\`ele avec la d\'emonstration d'une
\emph{in\'egalit\'e de type Poincar\'e}. Ils
adaptent la strat\'egie de preuve de H\"ormander \cite{Ho-i-1,Ho-i-2},
et utilisent une in\'egalit\'e fonctionnelle de type Hardy.

Donnons l'id\'ee g\'en\'erale de cet argument. On introduit
l'op\'erateur suivant sur les fonctions radiales, exprim\'e dans les
variables $E$ et $r$:
\[
T f (E,r) = \frac{1}{r^2 \sqrt{ 2 (E- \phi_{f^0}(r))}} \partial_r f =
\frac{1}{r^2 |v|} \partial_r f, 
\]
et on v\'erifie que $\cP h =0$ implique $h = T \tilde h$ pour un certain
$\tilde h$. On calcule alors par int\'egration par parties et
in\'egalit\'e de Cauchy-Schwarz (un argument d'approximation
suppl\'ementaire est n\'ecessaire, que nous n'\'evoquons pas ici)
\[
\drint |F'(E)| (h-\cP h)^2 \ddxv \le \| \Dx h \|_{L^2(\R^3)} \left( 3
  \drint \rho_{f^0}(r) \frac{\tilde h ^2}{4 r^4 (E-\phi_{f^0}(r))^2} |F'(E)| \ddxv
\right)^{1/2}.
\]
On montre alors l'in\'egalit\'e de type Hardy suivante 
\[
\left( 3 \drint \left( \rho_{f^0}(r) + \frac{(\phi_{f^0})'(r)}{r} \right)
  \frac{\tilde h ^2}{4 r^4(E-\phi_{f^0}(r))^2} |F'(E)| \ddxv \right) \le
\drint |F'(E)| |T \tilde h|^2 \ddxv,
\]
ce qui, combin\'e avec l'in\'egalit\'e pr\'ec\'edente, donne
\[
\| \Dx h \|_{L^2(\R^3)} ^2 - \drint |F'(E)| (h-\cP h)^2 \ddxv \ge 3
\drint \frac{(\phi_{f^0})'(r)}{r} \frac{\tilde h ^2}{4 r^4(E-\phi_{f^0}(r))^2}
|F'(E)| \ddxv 
\]
et conclut la preuve de positivit\'e. L'in\'egalit\'e de Hardy se
d\'emontre en remarquant que 
\[
(T \tilde h)^2 = T \left( \tilde h_1 ^2 \tilde h_2 T \tilde h_2 \right) -
\frac{T^2 \tilde h_2}{\tilde h_2} \tilde h ^2 \quad \mbox{ avec }
\quad \tilde h = \tilde h_1 \tilde h_2,
\]
puis 
\[
 - \frac{T^2 \tilde h_2}{\tilde h_2} = \frac{3}{4 r^4
   (E-\phi_{f^0}(r))^2} \left( \rho_{f^0}(r) + \frac{\phi_{f^0}(r)}{r}
 \right) \quad \mbox{ pour } \quad \tilde h_2 = r^3
 (2(E-\phi_{f^0}(r)))^{3/2}.
\]

\sk 
\noindent 
\'Etape 3: \emph{Traitement du noyau par modulation}.  On ajuste
finalement la fonction de translation $z_\phi$ au moyen d'un
th\'eor\`eme des fonctions implicites pour annuler les d\'efauts de
coercitivit\'e, {\it i.e.}, les termes n\'egatifs dans la
proposition ci-dessus.
\end{proof}

\subsubsection{Compacit\'e des suites minimisantes et r\'esolution de
  la conjecture}
\label{sec:compacite-des-suites}

On peut montrer la compacit\'e de certaines suites minimisantes
g\'en\'eralis\'ees $f_n$ au sens suivant.
\begin{prop}\label{prop:suites}
Si $f_n \in \cE$ v\'erifie 
\[
\sup_{n \ge 0} \inf_{z \in \R^3} \Big[ \| \phi_{f^n} - \phi_{f^0}(\cdot -z)
\|_{L^\infty(\R^3)} + \| \Dx \phi_{f^n} - \Dx \phi_{f^0}(\cdot - z)
\|_{L^2(\R^3)} \Big] < \delta_0
\]
et
\[
\lim_{n \to \infty} \| f_n ^* - (f^0)^* \|_{L^1(\R^6)} = 0, \quad
\liminf_{n \to \infty} \cH(f_n) \le \cH(f^0),
\]
alors 
\[
\lim_{n \to \infty} \drint \left| (1+|v|^2) \big( f_n(x,v) -
  f^0(x-z_{\phi_{f^n}}, v) \big) \right| \ddxv =0.
\]
\end{prop}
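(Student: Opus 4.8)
The plan is to combine the three structural results established above—the rearrangement monotonicity of the Hamiltonian (Proposition~\ref{prop:mon-ham}), the control $\cJ_{f^*}(\phi)-\cJ_{f^0}(\phi)\ge -\|\phi_f\|_{L^\infty(\R^3)}\|f^*-(f^0)^*\|_{L^1(\R^6)}$, and the coercivity of the reduced Hamiltonian $\cJ=\cJ_{f^0}$ (Proposition~\ref{prop:coercJ})—into a single chain squeezing $\cH(f_n)$ against $\cH(f^0)$. Write $z_n:=z_{\phi_{f_n}}$ and $\hat f_n:=f_n^{*\phi_{f_n}}$, and note first that $\cJ(\phi_{f^0})=\cH(f^0)$, since the identity $(f^0)^{*\phi_{f^0}}=f^0$ makes the gradient term in $\cJ_{f^0}(\phi_{f^0})$ vanish. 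The closeness hypothesis places each $\phi_{f_n}$ within $\delta_0$ of a translate of $\phi_{f^0}$ in $\cX$, so Proposition~\ref{prop:coercJ} applies and, chained with Proposition~\ref{prop:mon-ham} and the control above, yields for every $n$
\[
\cH(f_n)\ \ge\ \cJ_{f_n^*}(\phi_{f_n})\ \ge\ \cH(f^0)+c_0\,\|\Dx\phi_{f_n}-\Dx\phi_{f^0}(\cdot-z_n)\|_{L^2(\R^3)}-\varepsilon_n,
\]
where $\varepsilon_n:=\|\phi_{f_n}\|_{L^\infty(\R^3)}\,\|f_n^*-(f^0)^*\|_{L^1(\R^6)}\to 0$ because the $L^\infty$ norms are uniformly bounded (by $\|\phi_{f^0}\|_{L^\infty}+\delta_0$) while $\|f_n^*-(f^0)^*\|_{L^1}\to 0$ by hypothesis.

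In particular the chain gives the lower bound $\cH(f_n)\ge\cH(f^0)-\varepsilon_n$, so $\liminf_n\cH(f_n)\ge\cH(f^0)$; with the hypothesis this forces $\liminf_n\cH(f_n)=\cH(f^0)$. Passing to a subsequence along which $\cH(f_n)\to\cH(f^0)$, the two sides of the chain collapse: the coercivity defect $\|\Dx\phi_{f_n}-\Dx\phi_{f^0}(\cdot-z_n)\|_{L^2(\R^3)}\to 0$, and, using also the second inequality $\cJ_{f_n^*}(\phi_{f_n})\ge\cH(\hat f_n)$ of Proposition~\ref{prop:mon-ham}, the \emph{rearrangement gap}
\[
\mathrm{gap}_n:=\cH(f_n)-\cJ_{f_n^*}(\phi_{f_n})=\drint\Big(\tfrac{|v|^2}2+\phi_{f_n}\Big)(f_n-\hat f_n)\ddxv\ \longrightarrow\ 0 .
\]
The vanishing of the defect gives strong $L^2$-convergence of $\Dx\phi_{f_n}$ towards $\Dx\phi_{f^0}(\cdot-z_n)$, which the uniform $L^\infty$ closeness upgrades to $\phi_{f_n}\to\phi_{f^0}(\cdot-z_n)$, and hence to convergence of the microscopic energies $E_{\phi_{f_n}}\to E_{\phi_{f^0}}(\cdot-z_n)$.

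It remains to turn these two facts into the asserted convergence of $f_n$ itself. From $\mathrm{gap}_n\to 0$ together with the equality case of Proposition~\ref{prop:mon-ham} (equality iff $f=\hat f$), one expects a quantitative stability estimate for the rearrangement inequality giving $\|f_n-\hat f_n\|_{L^1(\R^6)}\to 0$. On the other hand $\hat f_n=f_n^{*\phi_{f_n}}$ is a decreasing function of $E_{\phi_{f_n}}$ equimeasurable with $f_n$, hence with $f_n^*\to(f^0)^*$; combining the convergence of the energies with this convergence of profiles—and using once more $f^0=(f^0)^{*\phi_{f^0}}$—yields $\hat f_n\to f^0(\cdot-z_n)$ in $L^1(\R^6)$, so that $f_n\to f^0(\cdot-z_n)$ in $L^1(\R^6)$. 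Finally the weight $(1+|v|^2)$ is recovered from uniform moment bounds: since $\cH(f_n)\to\cH(f^0)$ and $\cH_{pot}$ is subcritically dominated through the interpolation inequality of Proposition~\ref{prop:interpol-clef}, the kinetic energies $\cH_{cin}(f_n)$ stay uniformly bounded, which supplies the uniform integrability of $|v|^2 f_n$ needed to upgrade the $L^1$-convergence to convergence in $L^1((1+|v|^2)\,\ddxv)$; a routine subsequence argument then yields the stated limit.

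The main difficulty lies in this last conversion, i.e. in passing from the vanishing of the scalar quantity $\mathrm{gap}_n$ to genuine strong convergence. The self-consistent rearrangement $f\mapsto f^{*\phi_f}$ along the microscopic energy it generates is strongly nonlinear, and two delicate points must be handled: one needs a \emph{quantitative} (and velocity-weighted) version of the monotonicity inequality of Proposition~\ref{prop:mon-ham}, controlling $\|f-\hat f\|$ by the energy defect, and one needs the joint continuity of the map $(\text{profile},\phi)\mapsto f^{*\phi}$, so that the two separate convergences $E_{\phi_{f_n}}\to E_{\phi_{f^0}}(\cdot-z_n)$ and $f_n^*\to(f^0)^*$ combine into $\hat f_n\to f^0(\cdot-z_n)$. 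Controlling the large-velocity tails uniformly in $n$ throughout these rearrangement estimates—so that the weight $(1+|v|^2)$ survives the limit—is where the compact support of $f^0$ and the moment bounds inherited from the bounded Hamiltonian must be used most carefully.
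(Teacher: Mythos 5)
Your first half is exactly the paper's first step: chaining Proposition~\ref{prop:mon-ham}, the bound $\cJ_{f^*}(\phi)-\cJ_{f^0}(\phi)\ge-\|\phi_f\|_{L^\infty(\R^3)}\|f^*-(f^0)^*\|_{L^1(\R^6)}$ and Proposition~\ref{prop:coercJ} to force $\|\Dx\phi_{f_n}-\Dx\phi_{f^0}(\cdot-z_n)\|_{L^2(\R^3)}\to0$ is also how the paper starts. The genuine gap is in your second half, and it is precisely the gap the paper's argument is organized to avoid. You build everything on the self-consistent rearrangement $\hat f_n=f_n^{*\phi_{f_n}}$, and to conclude you need two facts that you explicitly leave unproven: a quantitative stability version of the monotonicity inequality (to pass from $\mathrm{gap}_n\to0$ to $\|f_n-\hat f_n\|_{L^1(\R^6)}\to0$) and the joint continuity of $(f^*,\phi)\mapsto f^{*\phi}$ (to pass from $f_n^*\to(f^0)^*$ and the convergence of the potentials to $\hat f_n\to f^0(\cdot-z_n)$). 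Neither is routine --- the map $f\mapsto f^{*\phi_f}$ is the strongly nonlinear object at the heart of the whole method, and ``one expects'' is where your proof stops. The paper never needs these facts: it translates, setting $\bar f_n(x,v)=f_n(x+z_{\phi_{f_n}},v)$, and works only with the rearrangement $\bar f_n^{*\phi_{f^0}}$ along the \emph{fixed} energy $E_{\phi_{f^0}}$. The identity \eqref{eq:diffHam} gives $\cH(\bar f_n)-\cH(f^0)+\frac12\|\Dx\phi_{\bar f_n}-\Dx\phi_{f^0}\|_{L^2(\R^3)}^2=\drint E_{\phi_{f^0}}(\bar f_n-f^0)\ddxv$, whose right-hand side therefore has nonpositive upper limit; since the hypothesis $f_n^*\to(f^0)^*$ directly yields $\drint E_{\phi_{f^0}}(f^0-\bar f_n^{*\phi_{f^0}})\ddxv\to0$ (both $f^0$ and $\bar f_n^{*\phi_{f^0}}$ are decreasing rearrangements along the \emph{same} energy, so their difference is controlled by the difference of profiles), one squeezes $0\le\drint E_{\phi_{f^0}}(\bar f_n-\bar f_n^{*\phi_{f^0}})\ddxv\to0$ using the monotonicity lemma, and the saturation statement needed to deduce $\|\bar f_n-f^0\|_{L^1(\R^6)}\to0$ now concerns a fixed potential only. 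This fixed-energy reformulation after modulation is the key idea your route is missing.

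Two further points. First, your upgrade to the $(1+|v|^2)$ weight is wrong as written: uniform boundedness of $\cH_{cin}(f_n)$ does \emph{not} give uniform integrability of $|v|^2f_n$ (mass can escape to arbitrarily large velocities at bounded kinetic cost). What works, and what the paper does, is to prove \emph{convergence} of the kinetic energy, $\cH_{cin}(f_n)=\cH(f_n)+\cH_{pot}(f_n)\to\cH(f^0)+\cH_{pot}(f^0)=\cH_{cin}(f^0)$, using the already established convergence of the potentials, and then to conclude by a Scheff\'e-type argument for the nonnegative functions $|v|^2f_n$ combined with the $L^1$ convergence. Second, under the stated $\liminf$ hypothesis your subsequence extraction only treats the subsequence realizing the lower limit, whereas the conclusion concerns the full sequence; note that the paper's own sketch tacitly uses a $\limsup$ bound at this point (which is what the argument genuinely requires), so this defect is inherited from the statement rather than created by you, but your closing ``routine subsequence argument'' cannot repair it.
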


\begin{proof}[\'Ebauche de preuve] 
  La preuve est faite en deux \'etapes. Tout d'abord le contr\^ole de
  coercitivit\'e pr\'ec\'edent implique sans difficult\'es que
\[
\lim_{n \to \infty} \| \Dx \phi_{f^n} - \Dx \phi_{f^0}(\cdot - z_{\phi_{f^n}})
\|_{L^2(\R^3)} =0.
\]
Ensuite on note $\bar f_n(x,v) = f_n (x + z_{\phi_{f^n}},v)$ et l'on
revient au hamiltonien complet en utilisant l'identit\'e 
\[
\cH(\bar f_n) - \cH(f^0) + \frac12 \| \Dx \phi_{\bar f^n} - \Dx \phi_{f^0}
\|_{L^2(\R^3)} = \drint E_{\phi_{f^0}} (\bar f_n - f^0) \ddxv.
\]
\`A partir des hypoth\`eses et de la convergence d\'ej\`a
d\'emontr\'ee, on a 
\[
\left\{ 
\begin{array}{l} \ds 
\limsup_{n \to \infty} \left( \cH(\bar f_n) - \cH(f^0) \right) = \limsup_{n \to
  \infty} \left( \cH(f_n) - \cH(f^0)  \right) \le 0,\vs \\ \ds 
\| \Dx \phi_{\bar f^n} - \Dx \phi_{f^0} \|_{L^2(\R^3)} = \| \Dx \phi_{f^n}
- \Dx \phi_{f^0}(\cdot - z_{\phi_{f^n}}) \|_{L^2(\R^3)} \to 0,
\end{array}
\right.
\]
et on en d\'eduit 
\[
\limsup_{n \to \infty} \drint E_{\phi_{f^0}} (\bar f_n - f^0) \ddxv \le
0. 
\]
L'hypoth\`ese $(f_n)^* \to (f^0)^*$ implique par ailleurs  
\[
\lim_{n \to \infty} \drint E_{\phi_{f^0}} (f^0 - \bar f_n ^{* \phi_{f^0}}) \ddxv = 0
\]
d'o\`u l'on d\'eduit 
\[
\limsup_{n \to \infty} \drint E_{\phi_{f^0}} (\bar f_n - \bar f_n ^{* \phi_{f^0}}) \ddxv \le
0.
\]
De par la monotonie du r\'earrangement $\drint E_{\phi_{f^0}} (\bar f_n
- \bar f_n ^{* \phi_{f^0}}) \ddxv \ge 0$, on en d\'eduit finalement
\[
\lim_{n \to \infty} \drint E_{\phi_{f^0}} (\bar f_n -\bar  f_n ^{* \phi_{f^0}})
\ddxv =0.
\]
Il suffit ensuite de montrer que la saturation de cette in\'egalit\'e
de r\'earrangement, combin\'ee \`a l'hypoth\`ese $(f_n)^* \to (f^0)^*$
implique que
\[
\lim_{n \to \infty} \| f_n - f^0 \|_{L^1(\R^6)} = 0.
\]
En combinant cette convergence avec l'hypoth\`ese de limite
sup\'erieure sur le hamiltonien ainsi que la convergence du potentiel,
on obtient finalement la convergence de l'\'energie cin\'etique 
\[
\lim_{n \to \infty} \drint |v|^2 f_n \ddxv = \drint |v|^2 f^0 \ddxv
\]
ce qui conclut la preuve de la proposition~\ref{prop:suites}. 
\end{proof}

La fin de la preuve du th\'eor\`eme \ref{theo:final} se fait ensuite
en combinant la proposition~\ref{prop:suites},
l'in\'egalit\'e d'interpolation de la
proposition~\ref{prop:interpol-clef}, ainsi que la contractivit\'e du
r\'earrangement sym\'etrique $\| f^* - (f^0)^* \|_{L^1(\R^6)} \le \| f
- f^0 \|_{L^1(\R^6)}$.

\section{\textup{Conclusion et probl\`emes ouverts}}
\label{sec:concl-et-probl}

Ce probl\`eme de stabilit\'e des galaxies est un exemple int\'eressant
de recherche math\'ematique nourrie par une question concr\`ete
pos\'ee par la physique th\'eorique.  Nous essayons pour terminer de
soulever quelques questions ouvertes d'ordre math\'ematique, en
sugg\'erant des liens avec d'autres travaux.

Tout d'abord, la premi\`ere question naturelle du point de vue de la
pertinence physique des r\'esultats est de \emph{quantifier} la taille
du voisinage de stabilit\'e orbitale. Cela para\^it maintenant une
t\^ache plus abordable avec la nouvelle th\'eorie de Lemou, M\'ehats
et Rapha\"el; il s'agit essentiellement de rendre explicites, ou tout
au moins constructives, les constantes de coercitivit\'e dans les
in\'egalit\'es fonctionnelles utilis\'ees.

L'autre question naturelle est de sortir du cadre strictement monotone
pour la solution stationnaire $f^0(E)$. Par exemple, nous pouvons
d\'ej\`a nous demander si, au niveau des solutions stationnaires,
localement au voisinage d'une solution orbitalement stable, il est
possible de d\'emontrer un th\'eor\`eme de param\'etrisation bijective
des solutions stationnaires par les conservations du syst\`eme, dans
le m\^eme esprit que le travail r\'ecent de Choffrut et Sver\'ak
\cite{CS} sur l'\'equation d'Euler incompressible en dimension $2$.

Cependant, nous pourrions nous attendre plus g\'en\'eralement, au
niveau dynamique, \`a la stabilit\'e orbitale autour d'une solution
stationnaire \og presque\fg{} monotone, et donc proche des solutions
orbitalement stables que nous avons \'etudi\'ees. Une premi\`ere
t\^ache serait ici de clarifier au niveau math\'ematique les
instabilit\'es cr\'e\'ees par des perturbations non radiales de
mod\`eles sph\'eriques anisotropes.

Les m\'ethodes variationnelles semblent n\'eanmoins trouver leur
limite, et cela soul\`eve la question de revenir \`a nouveau \`a une
approche directe par lin\'earisation. Cela nous am\`ene \'egalement
\`a faire une autre remarque importante sur les travaux que nous avons
pr\'esent\'es: \emph{ceux-ci n'utilisent pas la dynamique proprement
  dite de l'\'equation, mais uniquement ses invariants}. M\^eme si les
r\'esultats obtenus sont dynamiques, le c\oe ur conceptuel de ces
m\'ethodes n'utilise pas la dynamique. C'est une force de ces
approches, qui leur conf\`ere une grande robustesse pour traiter des
mod\`eles g\'en\'eraux et manipuler des solutions tr\`es faibles, mais
c'est \'egalement une faiblesse d\`es lors que l'on sort d'un cadre
parfaitement variationnel.

Par cons\'equent, il serait int\'eressant d'explorer les dialogues
possibles avec les r\'esultats de stabilit\'e non-lin\'eaire obtenus
dans \cite{MV}. Ces derniers r\'esultats utilisent des espaces
fonctionnels tr\`es r\'eguliers, c'est donc en ce sens l'extr\^eme
oppos\'e des m\'ethodes de stabilit\'e orbitale que nous avons
pr\'esent\'ees. En particulier, cela implique qu'il faut \og
traquer\fg{} les oscillations du syst\`eme dans les estimations de
r\'egularit\'e, alors que les espaces de Lebesgue utilis\'es dans les
th\'eories de stabilit\'e orbitale ne \og voient\fg{} pas les
oscillations de la variable de vitesse produites par le m\'elange de
phase. Les hypoth\`eses sur les donn\'ees initiales sont donc bien
plus fortes, mais cela permet \'egalement d'obtenir une information
plus pr\'ecise sur le comportement asymptotique du syst\`eme, et de
montrer la stabilit\'e non-lin\'eaire autour de solutions
stationnaires lin\'eairement stables, mais qui ne v\'erifient pas un
probl\`eme variationnel.

Les r\'esultats de \cite{MV} s'appliquent \`a l'\'equation de
Vlasov-Poisson gravitationnelle, mais uniquement dans le cas non
physique d'un domaine p\'eriodique en espace. Le cas d'un syst\`eme
auto-gravitant qui \og cr\'ee sa propre g\'eom\'etrie\fg{} et son
propre confinement au cours du temps repr\'esente un d\'efi
probablement difficile mais aussi tr\`es int\'eressant pour cette
approche par lin\'earisation: \`a l'inverse de \cite{MV} o\`u \og
l'amortissement Landau\fg{} produit une convergence vers z\'ero du
champ moyen asymptotiquement, on ne conna\^\i t plus ici \`a l'avance
quelle doit \^etre la limite du champ moyen lorsque $t \to
+\infty$. Un d\'efi conceptuel similaire se pose pour l'\'equation
d'Euler incompressible en dimension~$2$ ainsi que pour l'\'equation de
Vlasov-Poisson avec un confinement magn\'etique.

Pour terminer, nous mentionnerons le probl\`eme int\'eressant, mais
probablement pour le moment hors d'atteinte tant que les questions
pr\'ec\'edentes et la limite de champ moyen ne sont pas mieux comprises,
de faire le lien entre les r\'esultats de stabilit\'e obtenus pour
l'\'equation de Vlasov-Poisson gravitationnelle, et les r\'esultats de
stabilit\'e asymptotique (on pense par exemple \`a la th\'eorie KAM)
pour le probl\`eme \`a $N$ corps, dans la limite $N \to \infty$.

\end{document}